\newcommand{\proofend}{\hfill $\Box$ \vspace{2mm}}
\newtheorem{theorem}{Theorem}[section]
\newtheorem{assum}{Assumption}[section] 
\newtheorem{lemma}{Lemma}[section]
\newtheorem{definition}{Definition}[section]
\newcommand{\R}{\mathbb{R}}
\newcommand{\C}{\mathbb{C}}
\newcommand{\re}{\mbox{Re}}
\newcommand{\im}{\mbox{Im}}
\newcommand{\Lk}{{\bf L}(k)}
\newcommand{\tLk}{\tilde{{\bf L}}(k)}
\newcommand{\vecE}{{\bf E}}
\newcommand{\vecL}{{\bf L}}
\newcommand{\vecH}{{\bf H}}
\newcommand{\vecu}{{\bf u}}
\newcommand{\vecw}{  {\bf w} }
\newcommand{\vecv}{  {\bf v}  }
\newcommand{\vecM}{{\bf M}}
\newcommand{\vectM}{\tilde{\bf M}}
\newcommand{\vecW}{{\bf W}}
\newcommand{\vecU}{{\bf U}}
\newcommand{\vecV}{{\bf V}}
\newcommand{\vecQ}{{\bf Q}}
\newcommand{\vecP}{{\bf P}}
\newcommand{\vecJ}{{\bf J}}
\newcommand{\x}{{\bf x}}
\newcommand{\Tka}{{\bf T}_k}
\newcommand{\Tkb}{{\bf T}_{k_1}}
\newcommand{\Tkw}{{\bf T}_{k\sqrt{n}}}
\newcommand{\Kkw}{{\bf K}_{k\sqrt{n}}}
\newcommand{\Tkc}{{\bf T}_{k_2}}
\newcommand{\Kka}{{\bf K}_k}
\newcommand{\Kkb}{{\bf K}_{k_1}}
\newcommand{\Kkc}{{\bf K}_{k_2}}
\newcommand{\Ska}{{\bf S}_k}
\newcommand{\Skb}{{\bf S}_{k_1}}
\newcommand{\mdiv}{\mbox{div\,}}
\newcommand{\mcurl}{\mbox{curl\,}}
\begin{document}
 \pagenumbering{arabic}
\title{Boundary Integral Equations for the Transmission Eigenvalue Problem for Maxwell's Equations}

\author{Fioralba Cakoni\footnote{Department of Mathematical Sciences,
University of Delaware, Newark, Delaware 19716, USA,
 (fcakoni@udel.edu)}
 \ Houssem Haddar\footnote{CMAP, Ecole Polytechnique, Route de Saclay, 91128 Palaiseau Cedex, France, (Houssem.Haddar@inria.fr)}
 \  and Shixu Meng\footnote{Department of Mathematical Sciences,
University of Delaware, Newark, Delaware 19716, USA,
\;(sxmeng@udel.edu)}}
\date{}
\maketitle

\begin{abstract}
In this paper we consider the transmission eigenvalue problem  for Maxwell's equations corresponding to non-magnetic inhomogeneities with contrast in electric permittivity that changes sign inside its support. We formulate the transmission eigenvalue problem as an equivalent homogeneous  system of boundary integral equation, and  assuming that the contrast  is constant near the boundary of the support of the inhomogeneity, we prove that  the  operator associated with this system is Fredholm of  index zero and depends analytically on the wave number. Then we show the existence of  wave numbers that are not transmission eigenvalues which by an application of the analytic Fredholm theory implies that the set of transmission eigenvalues is discrete with positive infinity as the only accumulation point.  
\end{abstract} 

{\bf Keywords}: The transmission eigenvalue problem, inverse scattering, boundary integral equations, Maxwell's equations.

\section{Introduction}  
The transmission eigenvalue problem is  related to the scattering problem for an inhomogeneous media. In the current paper the underlying scattering problem is the scattering of electromagnetic waves by a (possibly anisotropic) non-magnetic material of bounded support $D$ situated in homogenous background, which in terms of the electric field reads: 
\begin{eqnarray}
&\mcurl \mcurl \vecE^s-k^2 \vecE^s = 0 \quad &\mbox{in} \quad {\mathbb R}^3\setminus\overline{D} \label{maxwellg} \\
&\mcurl \mcurl \vecE-k^2 N \vecE = 0 \quad &\mbox{in} \quad D  \\
&\nu \times \vecE = \nu \times \vecE^s+nu \times \vecE^i \quad &\mbox{on} \quad \partial D\\ 
&\nu \times \mcurl \vecE = \nu \times \mcurl \vecE^s+\nu \times \mcurl \vecE^i \quad &\mbox{on} \quad \partial D \\
&\lim\limits_{r\to \infty}\left(\mcurl \vecE^s\times x-ikr\vecE^s\right)=0 \label{sm} &
\end{eqnarray}
where $\vecE^i$ is the incident electric field, $\vecE^s$ is the scattered electric field and $N(x)=\displaystyle{\frac{\epsilon(x)}{\epsilon_0}+i\frac{\sigma(x)}{\omega\epsilon_0}}$ is the matrix index of refraction, $k=\omega\sqrt{\epsilon_0\mu_0}$ is the wave number corresponding to the background and the  frequency $\omega$ and the Silver-M{\"u}ller radiation condition is satisfied uniformly with respect to $\hat x=x/r$, $r=|x|$.  The difference $N-I$, in the following,  is refereed to as the contrast in the media. In scattering theory, transmission eigenvalues can be seen as the extension of the notion of resonant frequencies for impenetrable objects to the case of penetrable media. The transmission eigenvalue problem is related to non-scattering incident fields. Indeed, if $\vecE^i$ is such that $\vecE^s=0$ then $\vecE|_{D}$ and $\vecE_0=\vecE^i|_{D}$ satisfy the following homogenous problem
\begin{eqnarray}
&\mcurl \mcurl \vecE-k^2 N \vecE = 0 \quad &\mbox{in} \quad D  \label{maxwell1gg} \\
&\mcurl \mcurl \vecE_0-k^2 \vecE_0 = 0 \quad &\mbox{in} \quad D \label{maxwellgg} \\
&\nu \times \vecE = \nu \times \vecE_0 \quad &\mbox{on} \quad \partial D \label{dirichletgg} \\ 
&\nu \times \mcurl \vecE = \nu \times \mcurl \vecE_0 \quad &\mbox{on} \quad \partial D  \label{neumanngg}
\end{eqnarray}
which is referred to as the transmission eigenvalue problem. Conversely, if (\ref{maxwell1gg})-(\ref{neumanngg})  has a nontrivial solution $\vecE$ and $\vecE_0$ and $\vecE_0$ can be extended outside $D$ as a solution to $\mcurl \mcurl \vecE_0-k^2 \vecE_0 = 0$, then if this  extended  $\vecE_0$  is considered as the incident field the corresponding scattered field is $\vecE^s=0$. 

The transmission eigenvalue problem is a  nonlinear and non-selfadjoint eigenvalue problem that  is not covered by the standard theory of eigenvalue problems for  elliptic equations. For a long time research on the transmission eigenvalue problem mainly focussed on showing that transmission eigenvalues form at most a discrete set and we refer the reader to the survey paper \cite{CakHad2} for the state of the art  on this question up to 2010.  From a  practical point of view the question of discreteness was  important to answer,  since  sampling methods for reconstructing the support of an inhomogeneous medium \cite{Cak1}, \cite{8} fail if the interrogating frequency corresponds to a transmission eigenvalue. On the other hand, due to the non-selfadjointness of the transmission eigenvalue problem, the existence of transmission eigenvalues for non-spherically stratified media remained open for more than 20 years until Sylvester and P\"aiv\"arinta \cite{PS} showed  the existence of at least one transmission eigenvalue  provided that the contrast in the medium is large enough. A full answer on the existence of transmission eigenvalues was given by Cakoni, Gintides and Haddar \cite{1} where  the existence of an infinite set of transmission eigenvalue was proven only  under  the assumption that the contrast in the medium does not change sign and is bounded away from zero (see also \cite{9} \cite{11-1}, \cite{cos-had} and  \cite{kirsch} for Maxwell's equation). Since the appearance of these papers there has been an explosion of interest in the transmission eigenvalue problem and the papers in the Special Issue of Inverse Problems on  Transmission Eigenvalues, Volume 29, Number 10, October 2013,   are representative of the myriad directions that this research has taken.  

The discreteness and existence of transmission eigenvalues is very well understood under the assumption that the contrast does not change sign in all of $D$. Recently, for the scalar Helmholtz type equation, several papers have appeared that address both the question of discreteness and existence of transmission eigenvalue assuming that the  contrast is  of one sign only in a neighborhood of the inhomogeneity's  boundary $\partial D$, \cite{Bon-Che-Had-2011},  \cite{exterior}, \cite{ConH}, \cite{LV3}, \cite{LV4}, \cite{robbiano} and \cite{syld}. The picture is not the same for the transmission eigenvalue problem for the Maxwell's equation. The only result in this direction is the proof of discreteness of transmission eigenvalues in \cite{luca} for magnetic materials, i.e. when there is contrast in both the electric prematurity and magnetic permeability. The $T$-coercivity approach used in  \cite{luca} does not apply to our problem (\ref{maxwell1gg})-(\ref{neumanngg}), which mathematically has a different structure form the case of magnetic materials and this paper is dedicated to study the discreteness of transmission eigenvalues for the considered problem under weaker assumptions of $N-I$. Before specifying  our assumptions and approach let us rigorously  formulate our  transmission eigenvalue problem. 

{\bf Formulation of the Problem}: Let $D\in {\mathbb R}^3$ be a bounded open and connected region with $C^2$-smooth boundary $\partial D:=\Gamma$ (we call it $\Gamma$ for convenience of notation as will be seen later) and let $\nu$ denotes the outward unit normal vector on $\Gamma$. In general we consider a $3\times3$  matrix-valued function $N$ with $L^\infty(D)$ entries such that $\overline{\xi} \cdot \re(N)\xi\geq \alpha>0$ and $\overline{\xi} \cdot \im(N)\xi\geq 0$ in $D$ for every $\xi\in {\mathbb C}^3$, $|\xi|=1$. The transmission eigenvalue problem can be formulated as finding $\vecE, \vecE_0 \in \vecL^2(D)$,  $\vecE-\vecE_0 \in \vecH_0(\mbox{curl}^2, D)$ that satisfy 
\begin{eqnarray}
&\mcurl \mcurl \vecE-k^2 N \vecE = 0 \quad &\mbox{in} \quad D  \label{maxwell1g} \\
&\mcurl \mcurl \vecE_0-k^2 \vecE_0 = 0 \quad &\mbox{in} \quad D \label{maxwellg} \\
&\nu \times \vecE = \nu \times \vecE_0 \quad &\mbox{on} \quad \Gamma \label{dirichletg} \\ 
&\nu \times \mcurl \vecE = \nu \times \mcurl \vecE_0 \quad &\mbox{on} \quad \Gamma  \label{neumanng}
\end{eqnarray}
where
$$
\vecL^2(D):= \left\{\vecu: \vecu_j \in L^2(D), j=1,2,3 \right\},
$$
$$\vecH(\mcurl^2, D):=\left\{\vecu: \vecu\in \vecL^2(D), \mcurl \vecu\in \vecL^2(D) \; \mbox{and} \; \mcurl \mcurl \vecu \in \vecL^2(D)\right\},
$$ 
$$
\vecH_0(\mcurl^2, D):=\left\{\vecu: \vecu\in \vecH(\mcurl^2, D), \gamma_t\vecu=0 \; \mbox{and}\; \gamma_t \mcurl \vecu=0 \; \mbox{on} \; \Gamma \right\}.
$$
\begin{definition}
Values of $k\in{\mathbb C}$ for which the (\ref{maxwell1g})-(\ref{neumanng}) has a nontrivial solution $\vecE, \vecE_0 \in \vecL^2(D)$,  $\vecE-\vecE_0 \in \vecH_0(\mbox{\em curl}^2, D)$ are called transmission eigenvalues.
\end{definition}
It is well-known \cite{1}, \cite{haddar}  that, if  $\re(N-I)$ has one sign in $D$ the transmission eigenvalues form at most a discrete set with $+\infty$ as the only possible accumulation point, and if in addition $\im(N)=0$, there exists an infinite set of real transmission eigenvalues. Our main concern is to understand the structure of the transmission eigenvalue problem in the case when $\re(N-I)$ changes sign inside $D$. More specifically in this case we show that the transmission eigenvalues form at most a discrete set using an equivalent integral equation formulation of the transmission eigenvalue problem following the boundary integral equations approach developed in \cite{ConH}. The assumption on the real part of the contract $N-I$  that we need in our analysis  will become more precise later in the paper, but roughly speaking in our approach we allow for  $\re(N-I)$ to change sign in a compact subset of $D$.  To this end,  in the next section we  consider the simplest case when the electric permittivity is constant, i.e. $N=nI$ with positive $n\neq 1$, for which we develop and analyze an equivalent system of integral equations formulation of the corresponding transmission eigenvalue problem. This system of integral equations will then be a building block to study the more general case of the electric permittivity $N$. We note that the extension  to Maxwell's equations of the approach in \cite{ConH} is not a trivial task due to the more peculiar mapping properties of  the electromagnetic boundary integral operators as it will become clear in the paper.
\section{Boundary Integral Equations  for Constant Electric Permittivity}
Let $n>0$ be a constant such that $n\neq 1$ and consider the problem of  finding $\vecE, \vecE_0 \in \vecL^2(D)$,  $\vecE-\vecE_0 \in \vecH_0(\mbox{curl}^2, D)$ that satisfy 
\begin{eqnarray}
\mcurl \mcurl \vecE-k^2 n \vecE = 0 \quad &\mbox{in}& \quad D  \label{maxwell1} \\
\mcurl \mcurl \vecE_0-k^2 \vecE_0 = 0 \quad &\mbox{in}& \quad D \label{maxwell} \\
\nu \times \vecE = \nu \times \vecE_0 \quad &\mbox{on}& \quad \Gamma \label{dirichlet} \\ 
\nu \times (\mcurl \vecE) = \nu \times (\mcurl \vecE_0) \quad &\mbox{on}& \quad \Gamma  \label{neumann}
\end{eqnarray}
In the following we set $k_1:=k\sqrt{n}$. Before formulating the transmission eigenvalue problem as an equivalent system of boundary  integral equations, we recall  several integral operators and study their mapping properties. To this end, let us define the Hilbert  spaces of tangential fields defined on $\Gamma$:
\begin{eqnarray}
\vecH^{s_1,s_2}(\mbox{div}, \Gamma) :=\{ \vecu \in \vecH_t^{s_1}(\Gamma), \mdiv_{\Gamma}  \vecu \in H^{s_2}(\Gamma) \}, \nonumber \\
\vecH^{s_1,s_2}(\mbox{curl}, \Gamma) :=\{ \vecu \in \vecH_t^{s_1}(\Gamma), \mcurl_{\Gamma}  \vecu \in \vecH^{s_2}(\Gamma) \} \nonumber  
\end{eqnarray} 
endowed with the respective natural norms, where $\mcurl_{\Gamma}$ and $\mdiv_{\Gamma}$ are the surface curl and divergence operator, respectively, and for later use $\nabla_{\Gamma}$ denotes the tangential gradient operator. (Note that the boldface indicate  vector spaces of vector fields, whereas non-bold face indicate vector spaces of scalar fields.) If  $\gamma_\Gamma \,\vecu = \nu \times (\vecu \times \nu)$ denotes the tangential trace of a vector field $\vecu$ on the boundary $\Gamma$, we  define the boundary integral operators:
\begin{eqnarray}\label{dT}
\Tka(\vecu):= \frac{1}{k}\gamma_\Gamma \left(k^2 \int_\Gamma \Phi_k(\cdot,{{\bf y}}) \vecu({{\bf y}})\,ds_y + \nabla_\Gamma \int_\Gamma \Phi_k(\cdot, {{\bf y}}) \mdiv_\Gamma \vecu({{\bf y}}) \, ds_y \right),
\end{eqnarray} 
and
\begin{eqnarray}\label{dK}
\Kka(\vecu):= \gamma_\Gamma\left(\mcurl \int_\Gamma \Phi_k(\cdot,y) \vecu({\bf y})\,ds_y\right)
\end{eqnarray} 
where 
$$\Phi_k(x,y)=\frac{1}{4\pi}\frac{e^{ik|x-y|}}{|x-y|}$$
is the fundament solution of the Helmholtz equation $\Delta u+k^2u=0$. Referring to \cite{ConH} and \cite{mclean}  for the mapping properties of the single layer potential 
\begin{equation}\label{pics}
S_k(\varphi):= \int_{\Gamma}\Phi_k(\cdot, {\bf y}) \varphi({\bf y}) ds_y,
\end{equation}
 with scalar densities $\varphi$, we have that  the boundary integral operator 
\begin{equation}\label{pic}
 \Ska (\vecu) = \int_{\Gamma} \Phi_k(\cdot, {\bf y}) \vecu({\bf y}) \,ds
\end{equation}
acting on vector fields $\vecu$  is bounded from $\vecH^{-\frac{1}{2}+s}(\Gamma)$ to $\vecH^{\frac{1}{2}+s}(\Gamma) $ for $-1\leq s\leq 1$ and  hence  
$$
\Tka: \vecH^{-\frac{1}{2},-\frac{3}{2}}(\mbox{div}, \Gamma) \to \vecH^{-\frac{1}{2},-\frac{3}{2}}(\mbox{curl}, \Gamma) 
$$
$$
\Kka: \vecH^{-\frac{3}{2},-\frac{1}{2}}(\mbox{div}, \Gamma) \to \vecH^{-\frac{3}{2},-\frac{1}{2}}(\mbox{curl}, \Gamma) 
$$
are bounded linear operators. Now from the Stratton-Chu formula \cite{coltonkress} we have that
\begin{eqnarray}
\vecE_0(\x) &=& \mcurl \int_{\Gamma} (\vecE_0 \times \nu)({\bf y}) \Phi_k({\bf x},{\bf y}) ds_y +\int_\Gamma (\mcurl\vecE_0 \times \nu)({\bf y}) \Phi_k({\bf x},{\bf y})ds_y \nonumber \\ 
&+&  \frac{1}{k^2}\nabla \int_\Gamma \mdiv_\Gamma(\mcurl\vecE_0 \times \nu)({\bf y}) \Phi_k({\bf x}, {\bf y}) ds_y  \qquad \mbox{for}\quad \x\in D \nonumber
\end{eqnarray}
with similar expression for $\vecE$ where $k$  is replaced by $k_1:=k\sqrt{n}$ and hence we have the integral expression for $\vecE-\vecE_0$. Note by taking the difference $\vecE-\vecE_0$ we have the corresponding kernel $\Phi_{k_1}({\bf x},{\bf y})-\Phi_k({\bf x},{\bf y})$ is a smooth function of $\bf{x},\bf{y}$, and approaching the boundary $\Gamma$ and noting $\vecE \times \nu=\vecE_0 \times \nu$ and $\mcurl\vecE \times \nu=\mcurl\vecE_0 \times \nu$ we have
\begin{eqnarray}
\;\;\;\gamma_\Gamma (\vecE-\vecE_0) &= & (\Kka-\Kkb)(\vecE_0 \times \nu )  + \left(\frac{1}{k}\Tka-\frac{1}{k_1}\Tkb\right) (\mcurl\vecE_0 \times \nu), \nonumber \\
\gamma_\Gamma \mcurl (\vecE-\vecE_0) &= & (\Kka-\Kkb)(\mcurl \vecE_0 \times \nu )  +\left(k\Tka-k_1\Tkb\right) (\vecE_0 \times \nu). \nonumber 
\end{eqnarray}
From the boundary conditions (\ref{dirichlet}) and (\ref{neumann}) we have $\gamma_\Gamma (\vecE-\vecE_0)=0$ and $\gamma_\Gamma \mcurl (\vecE-\vecE_0)=0$, i.e.
\begin{eqnarray}
\Kka (\vecE_0 \times \nu) +\frac{1}{k} \Tka(\mcurl\vecE_0 \times \nu) &-& \Kkb (\vecE \times \nu) -\frac{1}{k_1} \Tkb(\mcurl\vecE \times \nu)=0, \label{ik1} \\
\Kka (\mcurl \vecE_0 \times \nu) +k\Tka (\vecE_0 \times \nu) &-& \Kkb (\mcurl \vecE \times \nu) -k_1\Tkb (\vecE \times \nu)=0. \label{ik2}
\end{eqnarray}
Introducing  $\vecM=\vecE \times \nu=\vecE_0 \times \nu$ and $\vecJ=\mcurl\vecE \times \nu=\mcurl\vecE_0 \times \nu$, we arrive at the following homogeneous system of boundary integral equations
\begin{eqnarray} \label{int}
\left( \begin{array}{cc}
k_1\Tkb-k\Tka & \Kkb-\Kka\\
\Kkb-\Kka & \frac{1}{k_1}\Tkb-\frac{1}{k}\Tka \end{array} \right) 
\left( \begin{array}{c} \vecM \\ \vecJ \end{array} \right) =\left( \begin{array}{c} 0\\ 0 \end{array} \right)
\end{eqnarray}
 for the unknowns $\vecM$ and $\vecJ$. Let us define
\begin{eqnarray}
\Lk=: 
\left( \begin{array}{cc}
k_1\Tkb-k\Tka & \Kkb-\Kka\\
\Kkb-\Kka & \frac{1}{k_1}\Tkb-\frac{1}{k}\Tka \end{array} \right)=\left( \begin{array}{cc}
k\sqrt{n}\Tkw-k\Tka & \Kkw-\Kka\\
\Kkw-\Kka & \frac{1}{k\sqrt{n}}\Tkw-\frac{1}{k}\Tka \end{array} \right). \label{lk}
\end{eqnarray}
Note that while the operator $ \Kkb-\Kka$  is a  smoothing pseudo-differential operator of order 2 (see e.g.  \cite{ConH} and \cite{hsiao}), the operators in the  main diagonal have a mixed structure. Indeed,  from the expressions
\begin{eqnarray}
k_1\Tkb-k\Tka &=& ({k^2_1}{\mathbf S}_{{k_1}}-k^2{\mathbf S}_{k})+ \nabla_{\Gamma}\circ\left({S}_{{k_1}}-{S}_{k}\right) \circ \mbox{div}_{\Gamma}\label{prop}\\
\frac{1}{k_1}\Tkb-\frac{1}{k}\Tka &=& \left({\mathbf S}_{{k_1}}-{\mathbf S}_{k}\right)+\nabla_{\Gamma}\circ \left(\frac{1}{{k^2_1}}{S}_{{k_1}}-  \frac{1}{{k}^2} {S}_{k}\right)\circ \mbox{div}_{\Gamma}\nonumber
\end{eqnarray}
where $S$ and ${\mathbf S}$ are defined by (\ref{pics}) and (\ref{pic}) respectively,  we can see that these operators have different behavior component-wise. Hence a more delicate analysis is called for to find the correct function spaces for $\vecM, \vecJ$ and their dual spaces in order to analyze the  mapping properties of the operator $\Lk$. 
\begin{lemma} \label{dual}
The dual space of $\,\vecH^{-\frac{3}{2},-\frac{1}{2}}(\mbox{\em div}, \Gamma)$ is $\vecH^{-\frac{1}{2},\frac{1}{2}}(\mbox{\em curl}, \Gamma)$. For $\vecu^t \in \vecH^{-\frac{1}{2},\frac{1}{2}}(\mbox{\em curl}, \Gamma)$ and $\vecu \in \vecH^{-\frac{3}{2},-\frac{1}{2}}(\mbox{\em div}, \Gamma) $, $\left<\vecu^t, \vecu\right>$ is understood by duality with respect to ${\bf L}^2(\Gamma)$ as a pivot space.
\end{lemma}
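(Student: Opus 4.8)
The plan is to establish the duality via a scale of known duality pairings for surface differential operators, and then check that the two scalar components of $\vecu$ and $\vecu^t$ pair up consistently. Recall the classical facts (see \cite{ConH}, \cite{mclean}): for the surface Laplace–Beltrami scale, $H^{s}(\Gamma)$ and $H^{-s}(\Gamma)$ are in duality with ${L}^2(\Gamma)$ as pivot space, and the surface operators $\nabla_\Gamma$, $\mdiv_\Gamma$, $\mcurl_\Gamma$, $\curl_\Gamma$ act boundedly between the natural Sobolev scales, with the integration-by-parts identities $\langle \mdiv_\Gamma \vecu,\varphi\rangle = -\langle \vecu, \nabla_\Gamma\varphi\rangle$ and $\langle \mcurl_\Gamma \vecu,\varphi\rangle = \langle \vecu, \curl_\Gamma\varphi\rangle$ holding on smooth functions and extending by density. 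These identities are precisely what exchanges a $\mdiv_\Gamma$ on one side with a $\curl_\Gamma$ (i.e. a ``$\mcurl$-type'' regularity) on the other.

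First I would make the spaces concrete. An element $\vecu\in\vecH^{-\frac32,-\frac12}(\mbox{div},\Gamma)$ is a tangential field with $\vecu\in\vecH_t^{-3/2}(\Gamma)$ and $\mdiv_\Gamma\vecu\in H^{-1/2}(\Gamma)$; an element $\vecu^t\in\vecH^{-\frac12,\frac12}(\mbox{curl},\Gamma)$ has $\vecu^t\in\vecH_t^{-1/2}(\Gamma)$ and $\mcurl_\Gamma\vecu^t\in H^{1/2}(\Gamma)$. Using the Hodge-type decomposition of tangential fields on $\Gamma$ (a closed $C^2$ surface, so we may split off the harmonic part, which is finite-dimensional and causes no trouble), write $\vecu=\nabla_\Gamma p+\curl_\Gamma q+\vecu_{\mathrm{harm}}$ and similarly for $\vecu^t$. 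The bilinear form $\langle\vecu^t,\vecu\rangle_{{L}^2(\Gamma)}$ then splits into pairings of potentials against potentials: the $\langle\nabla_\Gamma p^t,\nabla_\Gamma p\rangle$ term reduces, after integration by parts, to $\langle p^t,\Delta_\Gamma p\rangle$ up to lower order, which is controlled by matching the $H^{s}$-index of $p^t$ with the $H^{-s}$-index dictated by the $\mdiv_\Gamma$-regularity of $\vecu$; the $\langle\curl_\Gamma q^t,\curl_\Gamma q\rangle$ term analogously pairs the $\mcurl_\Gamma$-regularity of $\vecu^t$ against $q$. Tracking the indices: the ``$-\tfrac32$'' on $\vecu$ is balanced by the ``$+\tfrac12$'' coming through $\mcurl_\Gamma$ on $\vecu^t$ together with the two derivatives absorbed in $\curl_\Gamma$ on each side, and symmetrically the ``$-\tfrac12$'' on $\mdiv_\Gamma\vecu$ is balanced by the ``$-\tfrac12$'' regularity of $\vecu^t$. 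This shows the pairing is well defined and bounded, i.e. $\vecH^{-\frac12,\frac12}(\mbox{curl},\Gamma)$ embeds continuously into the dual of $\vecH^{-\frac32,-\frac12}(\mbox{div},\Gamma)$.

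For surjectivity of this embedding, I would argue that any bounded linear functional $\ell$ on $\vecH^{-\frac32,-\frac12}(\mbox{div},\Gamma)$ is represented by some $\vecu^t$ in the claimed space. Testing $\ell$ separately on gradients $\nabla_\Gamma p$ with $p\in H^{-1/2}(\Gamma)$ and on curls $\curl_\Gamma q$ with $q\in H^{1/2}(\Gamma)$ (these are exactly the constraints imposed by $\vecu\in\vecH_t^{-3/2}$ and $\mdiv_\Gamma\vecu\in H^{-1/2}$), the Riesz representation on each scalar factor produces a scalar potential for the corresponding part of $\vecu^t$ lying in the dual scale $H^{1/2}$ and $H^{-1/2}$ respectively; reassembling via the Hodge decomposition yields $\vecu^t$ with $\vecu^t\in\vecH_t^{-1/2}(\Gamma)$ and $\mcurl_\Gamma\vecu^t\in H^{1/2}(\Gamma)$, and the norm estimate is inherited from the component estimates. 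Finally, the stated convention that $\langle\vecu^t,\vecu\rangle$ is the ${L}^2(\Gamma)$ pivot pairing is then automatic from the construction, since on the dense subset of smooth tangential fields the assembled pairing coincides with $\int_\Gamma\vecu^t\cdot\overline{\vecu}\,ds$.

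The main obstacle I anticipate is bookkeeping the anisotropic (two-index) Sobolev norms through the Hodge decomposition: one must verify that the decomposition is itself continuous between the relevant two-index spaces and that no regularity is lost when passing between $\vecu$ (or $\vecu^t$) and its scalar potentials, in particular that the surface operators $\nabla_\Gamma$, $\curl_\Gamma$, $\mdiv_\Gamma$, $\mcurl_\Gamma$ are isomorphisms onto their ranges at the fractional orders $\pm\tfrac12,\pm\tfrac32$ after quotienting the harmonic fields. Once that mapping-property ledger is in place on a $C^2$ surface, the duality identification and the identification of the pairing with the ${L}^2(\Gamma)$ pivot are routine.
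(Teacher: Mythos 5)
Your proposal follows essentially the same route as the paper's proof: a Helmholtz/Hodge decomposition of both fields, upgrading the gradient part of $\vecu$ to $\vecH^{\frac{1}{2}}(\Gamma)$ and the curl part of $\vecu^t$ to $\vecH^{\frac{3}{2}}(\Gamma)$ from the $\mdiv_\Gamma$/$\mcurl_\Gamma$ data, pairing componentwise in the $\vecH^{\frac{3}{2}}$--$\vecH^{-\frac{3}{2}}$ and $\vecH^{\frac{1}{2}}$--$\vecH^{-\frac{1}{2}}$ dualities, and obtaining the reverse inclusion by testing the functional on pure curls and pure gradients (the paper does this by eigensystem expansion of $\Delta_\Gamma$, you by Riesz representation, which is the same mechanism). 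The argument is correct in substance; only the parenthetical scalar indices in your surjectivity step are off (the gradient test potentials have $p\in H^{\frac{3}{2}}(\Gamma)$ and the curl ones $q\in H^{-\frac{1}{2}}(\Gamma)$), a bookkeeping point you yourself flag.
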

\begin{proof}
For any tangential fields  $\vecu \in \vecH^{-\frac{3}{2},-\frac{1}{2}}(\mbox{div}, \Gamma) $ and $\vecu^t \in \vecH^{-\frac{1}{2},\frac{1}{2}}(\mcurl, \Gamma)$, we consider the corresponding Helmholtz orthogonal decomposition
\begin{eqnarray*}
\vecu = \overrightarrow{\mcurl}_{\Gamma} q + \nabla_{\Gamma} p, \quad\vecu^t = \overrightarrow{\mcurl}_{\Gamma} q^t + \nabla_{\Gamma} p^t.
\end{eqnarray*}
Since $\mdiv_{\Gamma} \vecu = \mdiv_{\Gamma}\nabla_{\Gamma} p=\Delta_{\Gamma} p \in   H^{-\frac{1}{2}}(\Gamma)$  we have by eigensystem expansion (e.g. \cite{nedelec}) that $\nabla_\Gamma p \in \vecH^{\frac{1}{2}}(\Gamma)$. Similarly, from the fact that $ \mcurl_{\Gamma}  \vecu^t \in \vecH^{\frac{1}{2}}(\Gamma)$ we obtain that $\overrightarrow{\mcurl}_{\Gamma} q^t \in \vecH^{\frac{3}{2}}(\Gamma)$. Now 
\begin{eqnarray}
\left<\vecu^t, \vecu\right> &=& \left<\overrightarrow{\mcurl}_{\Gamma} q^t + \nabla_{\Gamma} p^t,\overrightarrow{\mcurl}_{\Gamma} q + \nabla_{\Gamma} p\right> \nonumber \\
&=&\left <\overrightarrow{\mcurl}_{\Gamma} q^t,\overrightarrow{\mcurl}_{\Gamma} q\right> +\left <\nabla_{\Gamma} p,\nabla_{\Gamma} p^t\right>. \nonumber 
\end{eqnarray}
Hence the right hand side is well defined in the sense of duality of $\vecH^{\frac{3}{2}}(\Gamma)$-$\vecH^{-\frac{3}{2}}(\Gamma)$ and $\vecH^{\frac{1}{2}}(\Gamma)$-$\vecH^{-\frac{1}{2}}(\Gamma)$, and thus $\vecH^{-\frac{1}{2},\frac{1}{2}}(\mbox{curl}, \Gamma)$ is in the dual space of $\,\vecH^{-\frac{3}{2},-\frac{1}{2}}(\mbox{div}, \Gamma)$. 

Furthermore, if $\vecu^t = \overrightarrow{\mcurl}_{\Gamma} q^t + \nabla_{\Gamma} p^t$ is in the dual space of $\,\vecH^{-\frac{3}{2},-\frac{1}{2}}(\mbox{div}, \Gamma)$, then $\left<\vecu^t, \cdot \right>$ is continuous and linear on $\,\vecH^{-\frac{3}{2},-\frac{1}{2}}(\mbox{div}, \Gamma)$. Then for $\vecu = \overrightarrow{\mcurl}_{\Gamma} q$
$$
\left<\vecu^t,\vecu \right>=\left<\overrightarrow{\mcurl}_{\Gamma} q^t,\overrightarrow{\mcurl}_{\Gamma} q \right>.
$$
Notice $\overrightarrow{\mcurl}_{\Gamma} q$ is only in $\vecH^{-\frac{3}{2}}(\Gamma)$, therefore  by eigensystem analysis $\overrightarrow{\mcurl}_{\Gamma} q^t \in \vecH^{\frac{3}{2}}(\Gamma)$ and $\mcurl_\Gamma \overrightarrow{\mcurl}_{\Gamma} q^t \in H^{\frac{1}{2}}(\Gamma)$, i.e. $\mcurl_\Gamma \vecu^t \in H^{\frac{1}{2}}(\Gamma)$. Now for $\vecu = \nabla_{\Gamma} p$ where $\nabla_\Gamma p \in \vecH^{\frac{1}{2}}(\Gamma)$
$$
\left<\vecu^t,\vecu \right>=\left<\nabla_{\Gamma} p^t, \nabla_{\Gamma} p \right>.
$$
Then $\nabla_{\Gamma} p^t \in \vecH^{-\frac{1}{2}}(\Gamma)$. Therefore $\vecu^t \in \vecH^{-\frac{1}{2},\frac{1}{2}}(\mbox{curl}, \Gamma)$. Now we have proved the lemma.
\end{proof} \proofend

In the following the spaces $\vecH^{-\frac{3}{2},-\frac{1}{2}}(\mbox{div}, \Gamma) $ and $ \vecH^{-\frac{1}{2},\frac{1}{2}}(\mcurl, \Gamma)$ are considered dual to each other in the duality  defined in Lemma \ref{dual}.
In the next lemma  we establish some mapping properties of the operator $\Lk$ given by (\ref{lk}). 
\begin{lemma} \label{bddLk}
For a fixed $k$, the linear operator 
$$
\Lk: \vecH_t^{-\frac{1}{2}}(\Gamma) \times \vecH^{-\frac{3}{2},-\frac{1}{2}}(\mbox{\em div}, \Gamma)  \to  \vecH_t^{\frac{1}{2}}(\Gamma) \times\vecH^{-\frac{1}{2},\frac{1}{2}}(\mbox{\em curl}, \Gamma)
$$
is bounded. Moreover, the family of operators $\Lk$ depends analytically on $k \in \C \backslash \R_{-}$.
\end{lemma}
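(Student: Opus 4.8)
The plan is to establish the boundedness by decomposing $\Lk$ into its four blocks and treating the main-diagonal blocks and the off-diagonal blocks separately, then to obtain analyticity from the explicit dependence of the kernels on $k$. For the boundedness of the off-diagonal block $\Kkw-\Kka$, I would use the fact, already recalled in the excerpt, that $\Kkb-\Kka$ is a smoothing pseudodifferential operator of order $2$ because the kernel $\Phi_{k_1}-\Phi_k$ is smooth; concretely, this gives a map $\vecH^{-\frac{3}{2},-\frac{1}{2}}(\mbox{div},\Gamma)\to \vecH^{-\frac{3}{2},-\frac{1}{2}}(\mbox{curl},\Gamma)$, and one checks that on the diagonal we need it to land instead in the target spaces $\vecH_t^{\frac12}(\Gamma)$ (first row) and $\vecH^{-\frac12,\frac12}(\mbox{curl},\Gamma)$ (second row). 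The two-order gain in smoothness is exactly what upgrades $\vecH_t^{-\frac32}$ to $\vecH_t^{\frac12}$ and $\vecH_t^{-\frac12}$ (with $\mcurl_\Gamma$ in $\vecH^{-\frac12}$, after using that the range of a $\curl$-type potential has one more derivative) to the required curl-regularity; the precise index bookkeeping via the mapping properties of $S_k$ from $\vecH^{-\frac12+s}$ to $\vecH^{\frac12+s}$ for $-1\le s\le 1$ is the routine part.

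For the main-diagonal blocks I would exploit the splitting (\ref{prop}) of $k_1\Tkb-k\Tka$ and of $\frac{1}{k_1}\Tkb-\frac{1}{k}\Tka$ into a part built from the vectorial single layer combination ${\mathbf S}_{k_1}-{\mathbf S}_k$ (equivalently $k_1^2{\mathbf S}_{k_1}-k^2{\mathbf S}_k$), which again has a smooth kernel and hence maps into high-regularity spaces, plus a part of the form $\nabla_\Gamma\circ(S_{k_1}-S_k)\circ\mbox{div}_\Gamma$. The surface gradient loses one derivative and $\mbox{div}_\Gamma$ loses one derivative, while the scalar operator $S_{k_1}-S_k$ has a smooth kernel and therefore gains; tracking this on a field $\vecM\in\vecH_t^{-\frac12}(\Gamma)$ shows $\mbox{div}_\Gamma\vecM\in H^{-\frac32}(\Gamma)$, which the smooth scalar operator sends to something arbitrarily smooth, so $\nabla_\Gamma(\cdots)$ lands comfortably in $\vecH_t^{\frac12}(\Gamma)$ and, after applying $\mcurl_\Gamma$, in the appropriate Sobolev scale — this is why the anisotropy of the domain spaces ($-\frac12$ for $\vecM$, the $\mbox{div}$-graph norm with indices $-\frac32,-\frac12$ for $\vecJ$) is tailored precisely so that both diagonal entries are bounded into the anisotropic target spaces. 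The same computation with the second diagonal entry $(1/k_1)\Tkb-(1/k)\Tka$ acting on $\vecJ$ produces output in $\vecH^{-\frac12,\frac12}(\mbox{curl},\Gamma)$; one must be a little careful that the $\mbox{curl}_\Gamma$ of this output gains the needed $\frac12$ derivative relative to the $-\frac12$ tangential regularity, which again follows from the smoothing of the difference kernels.

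For analyticity in $k\in\C\setminus\R_-$, I would note that each of the kernels $\Phi_{k_1}(x,y)-\Phi_k(x,y)$, $k_1^2\Phi_{k_1}-k^2\Phi_k$, $S_{k_1}-S_k$ etc.\ is, for $x\neq y$ and also across the diagonal after the cancellation of the singular parts, an entire (or at least holomorphic on $\C\setminus\R_-$, because of the branch of $\sqrt{n}\,k$) function of $k$ taking values in the relevant operator; equivalently each difference operator has a convergent power-series expansion in $k$ whose coefficients are bounded operators between the stated spaces, with locally uniform operator-norm bounds. Since $k\mapsto \Lk$ is then a locally uniform limit of operator-valued polynomials in $k$ (and in $\sqrt{n}\,k$, which is holomorphic off $\R_-$), it is analytic; I would phrase this as: the map is weakly analytic because for fixed densities and test functionals the pairing is a convergent power series in $k$, and weak analyticity of a locally bounded operator-valued map implies norm analyticity.

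The main obstacle I expect is the careful verification that the two diagonal blocks map into the \emph{correct} anisotropic target spaces $\vecH_t^{\frac12}(\Gamma)$ and $\vecH^{-\frac12,\frac12}(\mbox{curl},\Gamma)$ rather than into the naive symmetric spaces — i.e.\ getting the Sobolev indices to match exactly requires using the smoothness of the difference kernels to gain the extra derivatives lost by $\nabla_\Gamma$ and $\mbox{div}_\Gamma$, and this is precisely the point (emphasized in the excerpt) where the Maxwell case departs from the scalar treatment of \cite{ConH} and where a "more delicate analysis" is needed; everything else is bookkeeping with the mapping properties of $S_k$ and the standard smoothing estimates for differences of Helmholtz single-layer kernels. \proofend
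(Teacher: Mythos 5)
Your overall route is the same as the paper's: split the diagonal blocks via (\ref{prop}), use the smoothing orders of $\Ska$, $\Skb-\Ska$, $\Kkb-\Kka$ (orders $1$, $3$, $2$, from \cite{ConH}), do the Sobolev index bookkeeping, and obtain analyticity componentwise in $k$ and $k\sqrt{n}$ on $\C\setminus\R_{-}$. However, there is a genuine gap at precisely the non-routine point of the lemma: the control of $\mcurl_\Gamma$ of the second row, i.e.\ of $(\Kkb-\Kka)\vecM+\bigl(\frac{1}{k_1}\Tkb-\frac{1}{k}\Tka\bigr)\vecJ$, which must lie in $H^{\frac12}(\Gamma)$ for the output to belong to $\vecH^{-\frac12,\frac12}(\mbox{curl},\Gamma)$. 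You assert this ``follows from the smoothing of the difference kernels'', but that is false for the gradient part of the second diagonal entry: by (\ref{prop}) that part is $\nabla_\Gamma\circ\bigl(\frac{1}{k_1^2}S_{k_1}-\frac{1}{k^2}S_k\bigr)\circ\mdiv_\Gamma$, and since $1/k_1^2\neq 1/k^2$ the singular parts of the two kernels do \emph{not} cancel; this scalar combination equals $\bigl(\frac{1}{k_1^2}-\frac{1}{k^2}\bigr)S_k$ plus a smoother remainder, so it gains only one order. Applied to $\mdiv_\Gamma\vecJ\in H^{-\frac12}(\Gamma)$ and followed by $\nabla_\Gamma$, it lands only in $\vecH_t^{-\frac12}(\Gamma)$, whose surface curl is a priori only in $H^{-\frac32}(\Gamma)$; no smoothing estimate upgrades that to $H^{\frac12}(\Gamma)$.

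The missing, decisive observation — which is exactly what the paper's proof uses — is the identity $\mcurl_\Gamma\circ\nabla_\Gamma=0$: the non-smoothing gradient term is annihilated by the surface curl, so that $\mcurl_\Gamma\bigl((\Kkb-\Kka)\vecM+(\tfrac{1}{k_1}\Tkb-\tfrac{1}{k}\Tka)\vecJ\bigr)=\mcurl_\Gamma(\Kkb-\Kka)\vecM+\mcurl_\Gamma(\Skb-\Ska)\vecJ$, and each surviving term is a genuine difference operator (orders $2$ and $3$) mapping $\vecH_t^{-\frac12}(\Gamma)$ and $\vecH_t^{-\frac32}(\Gamma)$ into $\vecH_t^{\frac32}(\Gamma)$, whence the curl lies in $H^{\frac12}(\Gamma)$. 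Once this identity is inserted, your index bookkeeping closes and your analyticity argument is essentially the paper's. A minor further caveat: the difference kernels are not ``arbitrarily smooth'' ($\Phi_{k_1}-\Phi_k$ is only finitely regular across the diagonal), so you should invoke the precise finite smoothing orders rather than infinite gain; those orders suffice for every index count needed here.
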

\begin{proof}
Let $\vecE, \vecE_0 \in \vecL^2(D)$,  $\vecE-\vecE_0 \in \vecH_0(\mbox{curl}^2, D)$ be a solution to the transmission eigenvalue problem (\ref{maxwell1})-(\ref{neumann}). Hence 
$$
\vecM = \vecE \times \nu \in \vecH_t^{-\frac{1}{2}}(\Gamma), \quad \vecJ = \mcurl \vecE \times \nu \in \vecH_t^{-\frac{3}{2}}(\Gamma).
$$
Noting that $\mdiv_{\Gamma}(\mcurl\vecE \times \nu)=\mcurl_{\Gamma} \mcurl \vecE=\mcurl^2\vecE\cdot \nu|_{\Gamma}$, we have that  $\mdiv_{\Gamma}\vecJ \in \vecH_t^{-\frac{1}{2}}(\Gamma)$ and  therefore $(\vecM, \vecJ) \in  \vecH_t^{-\frac{1}{2}}(\Gamma) \times \vecH^{-\frac{3}{2},-\frac{1}{2}}(\mbox{div}, \Gamma) $. It is known from \cite{ConH} that $\Ska$, $\Skb-\Ska$, $\Kkb-\Kka$ are smoothing operators of order $1$, $3$ and $2$ respectively. Then using (\ref{prop}) we have that the following operators are bounded
\begin{eqnarray*}
&k_1\Tkb-k\Tka &: \qquad \vecH_t^{-\frac{1}{2}}(\Gamma) \to \vecH_t^{\frac{1}{2}}(\Gamma) \\
&\;\;\Kkb-\Kka &: \qquad \vecH_t^{-\frac{3}{2}}(\Gamma) \to \vecH_t^{\frac{1}{2}}(\Gamma) \\
&\frac{1}{k_1}\Tkb-\frac{1}{k}\Tka &: \qquad \vecH^{-\frac{3}{2},-\frac{1}{2}}(\mbox{div}, \Gamma)  \to \vecH_t^{-\frac{1}{2}}(\Gamma)
\end{eqnarray*}
 Moreover 
\begin{eqnarray*}
&&\mcurl_{\Gamma} \left( (\Kkb-\Kka )\vecM+ (\frac{1}{k_1}\Tkb-\frac{1}{k}\Tka)\vecJ\right)\\
= &&\mcurl_{\Gamma}( \Kkb-\Kka )\vecM + \mcurl_{\Gamma}(\Skb-\Ska)\vecJ \in \vecH_t^{\frac{1}{2}}(\Gamma),
\end{eqnarray*}
and hence
\begin{eqnarray*}
( k_1\Tkb-k\Tka )\vecM + (\Kkb-\Kka)\vecJ &\in& \vecH_t^{\frac{1}{2}}(\Gamma),\\
( \Kkb-\Kka )\vecM + \left(\frac{1}{k_1}\Tkb-\frac{1}{k}\Tka\right)\vecJ &\in&\vecH^{-\frac{1}{2},\frac{1}{2}}(\mbox{curl}, \Gamma), 
\end{eqnarray*}
Hence $\Lk$ is bounded. Note that since every component of $\Lk$ is analytic on $\C \backslash \R_{-}$, then $\Lk$ is analytic on $\C \backslash \R_{-}$ (recall that $k_1=k\sqrt{n}$).
\end{proof} \proofend

We need the following lemma to show the equivalence between the transmission eigenvalue problem and the system of integral equations (\ref{int}).
\begin{lemma} \label{jumprelations}
Let $\Omega$ be any bounded open region in $\R^3$ and denote $\vecV(\mbox{\em curl}^2, \Omega):= \{ \vecu: \vecu \in \vecL^2(\Omega), \mbox{\em curl}^2 \vecu \in \vecL^2(\Omega) \}$. For ${\bf \varphi} \in \vecH_t^{-\frac{1}{2}}(\Gamma)$, ${\bf \psi} \in \vecH^{-\frac{3}{2},-\frac{1}{2}}(\mbox{\em div}, \Gamma)$, we define 
$$
\vectM_1({\bf \varphi})({\bf x}):= \mbox{\em curl} \int_\Gamma \Phi_k({\bf x},{\bf y}){\bf \varphi}({\bf y})  ds_y,\quad x \in \R^3 \backslash \Gamma, 
$$
and 
$$
\vectM_2({\bf \psi})({\bf y}):= \int_\Gamma \Phi_k({\bf x},{\bf y}) {\bf \psi}({\bf y})  ds_y, \quad x \in \R^3 \backslash \Gamma.
$$ 
Then $\vectM_1$ is continuous from $\vecH_t^{-\frac{1}{2}}(\Gamma)$ to $\vecV(\mbox{\em curl}^2, D^{\pm})$ and $\vectM_2$ is continuous from $\vecH^{-\frac{3}{2},-\frac{1}{2}}(\mbox{\em div}, \Gamma)$ to $\vecV(\mbox{\em curl}^2, D^{\pm})$ where $D^-=D$ and $D^+=B_R \backslash \overline{D}$ with a sufficient large ball $B_R$ containing the closure of $D$. Furthermore the following jump relations hold
\begin{eqnarray}
[\gamma_{t} \vectM_1({\bf \varphi}) ] = {\bf \varphi} \quad \mbox{in} \quad \vecH_t^{-\frac{1}{2}}(\Gamma), \label{jumprelations1}\\ \,
[\gamma_{t} \mbox{\em curl} \vectM_1({\bf \varphi}) ] = 0 \quad \mbox{in} \quad \vecH_t^{-\frac{3}{2}}(\Gamma),\label{jumprelations2} \\ \,
[\gamma_{t} \mbox{\em curl} \vectM_2({\bf \psi}) ] = {\bf \psi} \quad \mbox{in} \quad \vecH_t^{-\frac{3}{2}}(\Gamma),\label{jumprelations3} \\ \,
[\mbox{\em div}_{\Gamma} \gamma_t \mbox{\em curl} \vectM_2({\bf \psi}) ] = \mbox{\em div}_{\Gamma} {\bf \psi} \quad \mbox{in} \quad H^{-\frac{1}{2}}(\Gamma).
\label{jumprelations4}
\end{eqnarray}
\end{lemma}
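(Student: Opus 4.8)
The plan is to reduce all four statements to the mapping properties of the scalar single-layer potential $S_k$ of (\ref{pics}) and to its elementary jump relations, by exploiting the identity $\curl\curl = \grad\,\mdiv - \Delta$ together with the fact that $\Delta_{\x}\Phi_k(\x,\y) = -k^2\Phi_k(\x,\y)$ for $\x\neq\y$. Observe first that, regarding the vector single layer $\Ska$ of (\ref{pic}) as a potential on $\R^3\setminus\Gamma$, one has $\vectM_1(\varphi) = \curl\,\Ska\varphi$ and $\vectM_2(\psi) = \Ska\psi$, and that each Cartesian component of $\Ska\varphi$ is a scalar single-layer potential. Hence, by the scalar mapping properties recalled before (\ref{pic}), $\varphi\mapsto\Ska\varphi$ is bounded $\vecH_t^{-\frac{1}{2}}(\Gamma)\to\vecH^{1}(D^{\pm})$ and $\psi\mapsto\Ska\psi$ is bounded $\vecH_t^{-\frac{3}{2}}(\Gamma)\to\vecL^2(D^{\pm})$, and for tangential $\psi$ with $\mdiv_\Gamma\psi\in H^{-\frac{1}{2}}(\Gamma)$ surface integration by parts yields the key identity $\mdiv\,\Ska\psi = S_k(\mdiv_\Gamma\psi)$ in $D^{\pm}$.

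I would first prove the mapping properties. For $\vectM_1(\varphi)$, membership in $\vecL^2(D^{\pm})$ is immediate from $\Ska\varphi\in\vecH^{1}(D^{\pm})$, and $\curl\curl\,\vectM_1(\varphi) = \curl\curl\curl\,\Ska\varphi = -\Delta\,\curl\,\Ska\varphi = k^2\vectM_1(\varphi)\in\vecL^2(D^{\pm})$, using $\curl\grad = 0$ and $\Delta\Ska\varphi = -k^2\Ska\varphi$. For $\vectM_2(\psi)$ one has $\vectM_2(\psi)\in\vecL^2(D^{\pm})$ and $\curl\curl\,\vectM_2(\psi) = \grad\,\mdiv\,\Ska\psi + k^2\Ska\psi = \grad S_k(\mdiv_\Gamma\psi) + k^2\Ska\psi$, which lies in $\vecL^2(D^{\pm})$ by the scalar bound $S_k\colon H^{-\frac{1}{2}}(\Gamma)\to H^{1}(D^{\pm})$. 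Thus $\vectM_1,\vectM_2$ are continuous into $\vecV(\mcurl^2, D^{\pm})$.

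For the jump relations (with $\gamma_t\vecu = \nu\times\vecu$ and $[\,\cdot\,]$ denoting exterior minus interior trace) I would establish them first for smooth tangential densities — where they are the classical jump relations for the electromagnetic single-layer potential, see \cite{coltonkress} — and then extend by density. At the smooth level they follow from the scalar relations $[\,S_k\varphi_i\,] = 0$ and $[\,\partial_\nu S_k\varphi_i\,] = -\varphi_i$: continuity of $\Ska\varphi$ forces $[\,\grad(\Ska\varphi)_i\,] = -\varphi_i\nu$, hence $[\,\curl\,\Ska\varphi\,] = -\nu\times\varphi$ and, using $\nu\cdot\varphi = 0$, $[\,\gamma_t\vectM_1(\varphi)\,] = \nu\times[\,\curl\,\Ska\varphi\,] = \varphi$, which is (\ref{jumprelations1}); the same computation with $\psi$ in place of $\varphi$ gives (\ref{jumprelations3}). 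For (\ref{jumprelations2}) write $\curl\,\vectM_1(\varphi) = \grad\,\mdiv\,\Ska\varphi + k^2\Ska\varphi$; the scalar potential $\mdiv\,\Ska\varphi$ is continuous across $\Gamma$ (its jump equals $-\nu\cdot\varphi = 0$), so $[\,\grad\,\mdiv\,\Ska\varphi\,]$ is purely normal and $\nu\times[\,\grad\,\mdiv\,\Ska\varphi\,] = 0$, while $[\,\nu\times\Ska\varphi\,] = 0$ by continuity of the single layer. Finally (\ref{jumprelations4}) is obtained by applying $\mdiv_\Gamma$ to (\ref{jumprelations3}) and using $\mdiv_\Gamma(\nu\times\vecu) = -\nu\cdot\curl\,\vecu$: from $\nu\cdot\curl\curl\,\Ska\psi = \partial_\nu S_k(\mdiv_\Gamma\psi) + k^2\,\nu\cdot\Ska\psi$, which lies in $H^{-\frac{1}{2}}(\Gamma)$ by the previous step, one reads off that its jump equals $-[\,\partial_\nu S_k(\mdiv_\Gamma\psi)\,] = \mdiv_\Gamma\psi$, the term $k^2[\,\nu\cdot\Ska\psi\,]$ vanishing since $\Ska\psi\in\vecH(\mdiv, D^{\pm})$ has continuous normal trace. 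To pass to $\varphi\in\vecH_t^{-\frac{1}{2}}(\Gamma)$ and $\psi\in\vecH^{-\frac{3}{2},-\frac{1}{2}}(\mdiv,\Gamma)$ I would invoke density of smooth tangential fields in these spaces together with the continuity of $\vectM_1,\vectM_2$ established above and the continuity of the trace maps $\gamma_t\colon\vecV(\mcurl^2, D^{\pm})\to\vecH_t^{-\frac{1}{2}}(\Gamma)$ and $\gamma_t\curl\colon\vecV(\mcurl^2, D^{\pm})\to\vecH_t^{-\frac{3}{2}}(\Gamma)$; each identity (\ref{jumprelations1})--(\ref{jumprelations4}) is then an equality of continuous operators agreeing on a dense subspace.

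The step I expect to be the main obstacle is the low regularity of the densities. The classical jump relations are directly available only for Hölder-continuous (or smooth) densities, so the genuine work is to give meaning to the traces $\gamma_t$ and $\gamma_t\curl$ — and to their jumps — for fields that merely belong to $\vecV(\mcurl^2, D^{\pm})$, which requires the appropriate trace theorems and the duality framework set up in Lemma \ref{dual}, and then to push each identity through the density argument while landing in exactly the right target space. In particular, asserting (\ref{jumprelations4}) in $H^{-\frac{1}{2}}(\Gamma)$ — rather than in a weaker space — rests on the identity $\mdiv\,\Ska\psi = S_k(\mdiv_\Gamma\psi)$ and on $\mdiv_\Gamma\psi\in H^{-\frac{1}{2}}(\Gamma)$, which is precisely where the hypothesis $\psi\in\vecH^{-\frac{3}{2},-\frac{1}{2}}(\mdiv,\Gamma)$, as opposed to merely $\psi\in\vecH_t^{-\frac{3}{2}}(\Gamma)$, enters.
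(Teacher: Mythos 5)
Your proposal is correct and follows essentially the same route as the paper: continuity into $\vecV(\mbox{curl}^2,D^{\pm})$ via the scalar single-layer estimates together with $\mcurl^2\vectM_1=k^2\vectM_1$ and $\mcurl^2\vectM_2(\psi)=\nabla S_k(\mdiv_\Gamma\psi)+k^2{\bf S}_k\psi$, and the jump relations by a density argument with the traces understood by duality. The only difference is one of bookkeeping: where you appeal to ``appropriate trace theorems'' on $\vecV(\mbox{curl}^2,D^{\pm})$ and derive the regular-density jumps from the scalar single-layer jumps, the paper supplies exactly that missing piece by defining $\gamma_t$ through the second Green identity against $\vecH^2$ liftings (giving $\|\gamma_t\vecu^{\pm}\|\le c(\|\vecu\|_{\vecL^2}+\|\mcurl^2\vecu\|_{\vecL^2})$) and cites N\'ed\'elec for the jumps with densities in $\vecH^{-\frac12,-\frac12}(\mbox{div},\Gamma)$, then passes to the limit, treating (\ref{jumprelations4}) distributionally as you do.
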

\begin{proof}
Let us denote by $<\cdot,\cdot>$ the $\vecH_t^{\frac{1}{2}}(\Gamma)$-$\vecH_t^{-\frac{1}{2}}(\Gamma)$ or $H^{\frac{1}{2}}(\Gamma)$-$H^{-\frac{1}{2}}(\Gamma)$ duality product.
Since ${\bf \varphi} \in \vecH_t^{-\frac{1}{2}}(\Gamma)$, then from the classical results for single layer potentials
$$
\| \vectM_1({\bf \varphi}) \|_{\vecL^2(D^{\pm})} \le c\left\lVert \int_\Gamma \Phi_k({\bf x},{\bf y}){\bf \varphi}({\bf y})  ds_y \right\rVert_{\vecH^1(D^{\pm})} \le c\|{\bf \varphi}\|_{\vecH_t^{-\frac{1}{2}}(\Gamma)}\quad
$$
and since $\mcurl^2 \vectM_1({\bf \varphi})-k^2 \vectM_1({\bf \varphi})=0$ in $D^\pm$, then
$$
\| \mcurl^2 \vectM_1({\bf \varphi}) \|_{\vecL^2(D^{\pm})} =|k^2| \| \vectM_1({\bf \varphi}) \|_{\vecL^2(D^{\pm})} \le c\|{\bf \varphi}\|_{\vecH_t^{-\frac{1}{2}}(\Gamma)} \quad
$$
where $c$ is some constant depending on $k$.
For ${\bf \psi} \in \vecH^{-\frac{3}{2},-\frac{1}{2}}(\mbox{div}, \Gamma)$, we have from \cite{ConH}
$$
\| \vectM_2({\bf \psi}) \|_{\vecL^2(D^{\pm})} \le c\|{\bf \psi}\|_{\vecH_t^{-\frac{3}{2}}(\Gamma)} \, .
$$
Notice that 
$$
\mcurl^2 \vectM_2({\bf \psi})({\bf x}) =k^2 \int_\Gamma \Phi_k({\bf x},{\bf y}) {\bf \psi}({\bf y})  ds_y + \nabla \int_\Gamma \mdiv_\Gamma  {\bf \psi}({\bf y}) \Phi_k(\cdot, {\bf y}) ds_y 
$$ 
and $\mdiv_\Gamma  {\bf \psi} \in H^{-\frac{1}{2}}(\Gamma)$,  hence we have from \cite{ConH}
$$
\| \mcurl^2 \vectM_2({\bf \psi}) \|_{\vecL^2(D^{\pm})} \le c \left( \|{\bf \psi}\|_{\vecH_t^{-\frac{3}{2}}(\Gamma)}+ \|\mdiv_\Gamma  {\bf \psi}\|_{H^{-\frac{1}{2}}(\Gamma)}  \right).
$$
This proves the continuity property of $\vectM_1$ and $\vectM_2$. To prove the jump relations, we will use a density argument. Let 
$$
\vecu^\pm=\mcurl \int_\Gamma \Phi_k({\bf x},{\bf y}){\bf \varphi}({\bf y})  ds_y \quad \mbox{in} \quad D^{\pm}.
$$ 
We define the tangential component $\gamma_t \vecu^{\pm}$ by duality. For ${\bf \alpha} \in \vecH_t^{\frac{1}{2}}(\Gamma)$, $\|{\bf \alpha}\|_{\vecH_t^{\frac{1}{2}}(\Gamma)}=1$, there exists $\vecw^\pm \in \vecH^2(D^{\pm})$ and $\vecw^+$ compactly supported in $B_R$ such that $\gamma_t \mcurl\vecw=0, \gamma_t \vecw={\bf \alpha}$ and $\|\vecw\|_{\vecH^2(D^{\pm})} \le c\|{\bf \alpha}\|_{\vecH_t^{\frac{1}{2}}(\Gamma)}$ (see \cite{haddar}). Moreover,
$$
<{\bf \alpha,\gamma_t \vecu^\pm}>= \pm \int_{D^{\pm}} (\vecu^\pm \cdot \mcurl^2 \vecw^\pm- \vecw^\pm \cdot \mcurl^2 \vecu^\pm )d{\bf x} .
$$
Then
\begin{eqnarray*}
|<{\bf \alpha}, \gamma_t \vecu^\pm>|
&\le &  ( \|\vecu\|_{\vecL^2(D^\pm)} + \|\mcurl^2 \vecu\|_{\vecL^2(D^\pm)}) \|\vecw\|_{\vecH^2(D^{\pm})} \\
&\le & c_1 ( \|\vecu\|_{\vecL^2(D^\pm)} + \|\mcurl^2 \vecu\|_{\vecL^2(D^\pm)} ) \\
& \le & c_2\|{\bf \varphi}\|_{\vecH_t^{-\frac{1}{2}}(\Gamma)}
\end{eqnarray*}
where $c_1$ and $c_2$ are independent from $u$, therefore $\|\gamma_t \vecu^\pm\|_{\vecH_t^{-\frac{1}{2}}(\Gamma)}\le c_2\|{\bf \varphi}\|_{\vecH_t^{-\frac{1}{2}}(\Gamma)}$. Choosing ${\bf \varphi}_n \in \vecH^{-\frac{1}{2},-\frac{1}{2}}(\mbox{div}, \Gamma)$ such that ${\bf \varphi}_n \to {\bf \varphi}$ in $\vecH_t^{-\frac{1}{2}}(\Gamma)$ yields
$$
\|\gamma_t \vecu^\pm -\gamma_t \vecu_n^\pm \|_{\vecH_t^{-\frac{1}{2}}(\Gamma)} \le c\|{\bf \varphi}-{\bf \varphi}_n\|_{\vecH_t^{-\frac{1}{2}}(\Gamma)} \to 0,
$$
since $[\gamma_t \vecu_n]={\bf \varphi_n}$ for ${\bf \varphi}_n \in \vecH^{-\frac{1}{2},-\frac{1}{2}}(\mbox{div}, \Gamma)$ (see \cite{nedelec}). Letting $n \to \infty$ yields $[\gamma_t \vecu]={\bf \varphi}$ in $\vecH_t^{-\frac{1}{2}}(\Gamma)$, hence (\ref{jumprelations1}) holds. In a similar argument we can prove (\ref{jumprelations2}) (\ref{jumprelations3}).

From (\ref{jumprelations3}) we have
$$
[\gamma_{t} \mcurl \vectM_2({\bf \psi}) ] = {\bf \psi} \quad \mbox{in} \quad \vecH_t^{-\frac{3}{2}}(\Gamma).
$$
Then
$$
[\mbox{div}_{\Gamma} \gamma_t \mcurl \vectM_2({\bf \psi}) ] = \mbox{div}_{\Gamma} {\bf \psi}
$$
in the distributional sense. Notice $\mbox{div}_{\Gamma} {\bf \psi}$ and $\left(\mbox{div}_{\Gamma} \gamma_t \mcurl \vectM_2({\bf \psi})\right)^\pm$ are in $H^{-\frac{1}{2}}(\Gamma)$, then (\ref{jumprelations4}) holds.
\end{proof}\proofend

Now we are ready to prove the equivalence between the transmission eigenvalue problem and the system of integral equations (\ref{int}). Our proof follow the lines of the proof of Theorem 2.2 in \cite{ConH}.
\begin{theorem} \label{equiv-ITE}
The following statements are equivalent:
\begin{itemize}
\item[(1)] There exists non trivial $\vecE$, $\vecE_0 \in \vecL^2(D)$, $\vecE- \vecE_0 \in \vecH(\mbox{\em curl}^2, D)$ such that (\ref{maxwell1})-(\ref{neumann}) holds. 
\item[(2)] There exists non trivial $(\vecM, \vecJ) \in \vecH_t^{-\frac{1}{2}}(\Gamma) \times \vecH^{-\frac{3}{2},-\frac{1}{2}}(\mbox{\em div}, \Gamma) $ such that (\ref{int}) holds and either $\vecE_0^{\infty}(\vecM,\vecJ)=0$ or $\vecE^{\infty}(\vecM,\vecJ)=0$ where
\begin{eqnarray}
\vecE_0^{\infty}(\vecM,\vecJ)(\hat{x}) &=& \hat{x} \times\left( \frac{1}{4 \pi}\mbox{\em curl} \int_{\Gamma} \vecM(y) e^{-ik\hat{x}\cdot y} ds_y \right. \label{cr}\\
&&\hspace*{-2cm}+\left. \frac{1}{4 \pi k^2} \nabla \int_\Gamma \mbox{\em div}_{\Gamma}  \vecJ (y) e^{-ik\hat{x}\cdot y} ds_y + \int_\Gamma \vecJ( y) e^{-ik\hat{x}\cdot y}ds_y \right) \times \hat{x}\nonumber
\end{eqnarray} 
with the same expression for $\vecE^{\infty}(\vecM,\vecJ)$ where $k$ is replaced by $k_1$.
\end{itemize}
\end{theorem}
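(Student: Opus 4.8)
The plan is to prove the equivalence in two directions, using the Stratton–Chu representation formula to pass from the PDE to the boundary integral equations, and the jump relations of Lemma \ref{jumprelations} together with a unique continuation / radiation-condition argument to pass back.

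\textbf{Direction (1)$\Rightarrow$(2).} Suppose $\vecE,\vecE_0$ solve (\ref{maxwell1})--(\ref{neumann}). Set $\vecM=\vecE\times\nu=\vecE_0\times\nu$ and $\vecJ=\mcurl\vecE\times\nu=\mcurl\vecE_0\times\nu$; by the computation preceding Lemma \ref{bddLk} these lie in $\vecH_t^{-1/2}(\Gamma)\times\vecH^{-3/2,-1/2}(\mbox{div},\Gamma)$, and the derivation leading to (\ref{ik1})--(\ref{ik2}) shows $(\vecM,\vecJ)$ satisfies (\ref{int}). Nontriviality: if $\vecM=0$ and $\vecJ=0$ then $\gamma_t\vecE_0=0$ and $\gamma_t\mcurl\vecE_0=0$ on $\Gamma$, so $\vecE_0\in\vecH_0(\mcurl^2,D)$ solves $\mcurl\mcurl\vecE_0-k^2\vecE_0=0$; extending $\vecE_0$ by zero outside $D$ gives an entire radiating solution, hence $\vecE_0\equiv0$ by Rellich/unique continuation (or one invokes that $k^2$ is not a Maxwell Dirichlet eigenvalue of $D$ — but the extension-by-zero argument is cleaner and needs no restriction on $k$), and similarly $\vecE\equiv0$, contradicting nontriviality. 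Finally, to get the far-field condition: extend $\vecE_0$ to $\R^3$ by the Stratton–Chu integral with kernel $\Phi_k$ and densities $\vecM,\vecJ$, call it $\vecE_0^{\mathrm{ext}}$; inside $D$ it equals $\vecE_0$ by Stratton–Chu, and outside $D$ it is a radiating solution of $\mcurl\mcurl\cdot-k^2\cdot=0$ whose far-field pattern is exactly the $\vecE_0^\infty(\vecM,\vecJ)$ written in (\ref{cr}). Now use that $[\gamma_t\vecE_0^{\mathrm{ext}}]$ and $[\gamma_t\mcurl\vecE_0^{\mathrm{ext}}]$ across $\Gamma$ are computed, via the jump relations (\ref{jumprelations1})--(\ref{jumprelations4}), to be $\vecM-(\vecE\times\nu)$-type combinations; rewriting (\ref{int}) shows that the exterior traces of $\vecE_0^{\mathrm{ext}}$ and of the analogous $\vecE^{\mathrm{ext}}$ (kernel $\Phi_{k_1}$) agree with the interior traces, i.e. $\vecE_0^{\mathrm{ext}}$ and $\vecE^{\mathrm{ext}}$ are genuinely continuous across $\Gamma$ together with their curls — which, combined with the fact that the difference of the two representations has smooth kernel $\Phi_{k_1}-\Phi_k$, forces $\vecE_0^{\mathrm{ext}}\equiv0$ in the exterior, hence $\vecE_0^\infty(\vecM,\vecJ)=0$. (Symmetrically one may instead conclude $\vecE^\infty(\vecM,\vecJ)=0$; either alternative suffices.)

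\textbf{Direction (2)$\Rightarrow$(1).} Given nontrivial $(\vecM,\vecJ)$ solving (\ref{int}) with, say, $\vecE_0^\infty(\vecM,\vecJ)=0$, define
$\vecE_0^{\mathrm{ext}}(\x)=\mcurl\int_\Gamma\Phi_k(\x,\y)\vecM(\y)\,ds_y+\int_\Gamma\Phi_k(\x,\y)\vecJ(\y)\,ds_y+\frac{1}{k^2}\nabla\int_\Gamma\mdiv_\Gamma\vecJ(\y)\Phi_k(\x,\y)\,ds_y$
on $\R^3\setminus\Gamma$, and $\vecE^{\mathrm{ext}}$ by the same formula with $k$ replaced by $k_1$. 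By Lemma \ref{jumprelations} both are in $\vecV(\mcurl^2,D^\pm)$ and solve the respective Maxwell equations in $D^-$ and in $D^+$. Since $\vecE_0^\infty=0$, Rellich's lemma gives $\vecE_0^{\mathrm{ext}}=0$ in $D^+$, hence $\gamma_t\vecE_0^{\mathrm{ext}}|_+=0$ and $\gamma_t\mcurl\vecE_0^{\mathrm{ext}}|_+=0$; the jump relations then give the \emph{interior} traces of $\vecE_0^{\mathrm{ext}}$ in terms of $\vecM,\vecJ$. Likewise the interior traces of $\vecE^{\mathrm{ext}}$ are computed from the jump relations, and its exterior traces are controlled by (\ref{int}): subtracting the two rows of (\ref{int}), which exactly encode $\gamma_t(\vecE^{\mathrm{ext}}-\vecE_0^{\mathrm{ext}})|_-=0$ and $\gamma_t\mcurl(\vecE^{\mathrm{ext}}-\vecE_0^{\mathrm{ext}})|_-=0$ after using the jump formulas, shows $\gamma_t\vecE^{\mathrm{ext}}|_-=\gamma_t\vecE_0^{\mathrm{ext}}|_-$ and the same for curls. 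Setting $\vecE:=\vecE^{\mathrm{ext}}|_D$, $\vecE_0:=\vecE_0^{\mathrm{ext}}|_D$ we obtain a solution of (\ref{maxwell1})--(\ref{neumann}) with $\vecE-\vecE_0\in\vecH_0(\mcurl^2,D)$. Nontriviality: if $\vecE=\vecE_0=0$ in $D$ then the interior traces vanish, so by the jump relations $[\gamma_t\vecE_0^{\mathrm{ext}}]=\vecM$ and $[\gamma_t\mcurl\vecE_0^{\mathrm{ext}}]=\vecJ$ are $\gamma_t,\gamma_t\mcurl$ of the exterior field alone; but $\vecE_0^{\mathrm{ext}}|_+=0$, so $\vecM=0$ and $\vecJ=0$ (the latter including $\mdiv_\Gamma\vecJ=0$ via (\ref{jumprelations4})), contradicting nontriviality of $(\vecM,\vecJ)$.

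\textbf{Main obstacle.} The routine part is the Stratton–Chu bookkeeping; the delicate point is handling the traces rigorously at the low regularity $\vecM\in\vecH_t^{-1/2}(\Gamma)$, $\vecJ\in\vecH^{-3/2,-1/2}(\mbox{div},\Gamma)$ — in particular that $\gamma_t\mcurl$ of the single-layer-type potentials and the surface-divergence jump (\ref{jumprelations4}) make sense and that the two rows of the system (\ref{int}) \emph{equivalently} encode the matching of interior and exterior Cauchy data. This is exactly why Lemma \ref{jumprelations} was proved by the density argument, and invoking it carefully (especially (\ref{jumprelations2}) and (\ref{jumprelations4}), which supply the ``extra'' regularity of $\mcurl\vectM_1$ and of $\mdiv_\Gamma\gamma_t\mcurl\vectM_2$) is the crux; the remaining subtlety is the use of Rellich's lemma, which requires $k$ (resp.\ $k_1$) to have nonnegative imaginary part or be real and positive, consistent with the hypotheses of the problem.
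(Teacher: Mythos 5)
Your overall strategy (Stratton--Chu representation, Lemma \ref{jumprelations}, Rellich's lemma) is the same as the paper's, but the key step of direction (1)$\Rightarrow$(2) --- showing the far field vanishes --- is garbled in a way that, as written, fails. You claim that the jump relations together with (\ref{int}) show that $\vecE_0^{\mathrm{ext}}$ and $\vecE^{\mathrm{ext}}$ are ``genuinely continuous across $\Gamma$ together with their curls,'' and then invoke the smoothness of $\Phi_{k_1}-\Phi_k$ to force $\vecE_0^{\mathrm{ext}}\equiv 0$ in the exterior. By Lemma \ref{jumprelations}, however, the jumps of $\gamma_t\vecE_0^{\mathrm{ext}}$ and $\gamma_t\mcurl\vecE_0^{\mathrm{ext}}$ across $\Gamma$ are exactly the densities $\vecM$ and $\vecJ$ (not ``$\vecM-(\vecE\times\nu)$-type combinations''), so continuity across $\Gamma$ would force $\vecM=\vecJ=0$ and hence, by the representation formula, $\vecE_0\equiv 0$ --- contradicting the nontriviality you established two sentences earlier. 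Moreover, what (\ref{int}) encodes is the vanishing of the Cauchy data of the \emph{difference} $\vecE^{\mathrm{ext}}-\vecE_0^{\mathrm{ext}}$ on $\Gamma$ (that is precisely how (\ref{int}) was derived), not continuity of each potential separately, and the smooth-kernel remark contributes nothing to the desired conclusion. The correct and shorter argument, which is the paper's: inside $D$ the potential coincides with $\vecE_0$ by Stratton--Chu, so its interior Cauchy data are $(\vecM,\vecJ)$; subtracting the jumps $(\vecM,\vecJ)$ shows that the exterior traces $(\vecE_0\times\nu)^+$ and $(\mcurl\vecE_0\times\nu)^+$ vanish, hence the exterior field is a radiating solution of $\mcurl\mcurl\,\cdot-k^2\,\cdot=0$ with zero Cauchy data and therefore vanishes identically outside $\overline D$, giving $\vecE_0^{\infty}(\vecM,\vecJ)=0$ (and likewise $\vecE^{\infty}(\vecM,\vecJ)=0$). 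No appeal to (\ref{int}) or to $\Phi_{k_1}-\Phi_k$ is needed at that point.

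The remainder of your proposal is essentially the paper's proof: in (2)$\Rightarrow$(1) you define $\vecE_0$ and $\vecE$ by the potentials with kernels $\Phi_k$ and $\Phi_{k_1}$, use the jump relations and (\ref{int}) to obtain the interior equations and the matching of Cauchy data on $\Gamma$, and prove nontriviality by combining the assumed vanishing of one far field with Rellich's lemma and the jump relations to conclude $\vecM=\vecJ=0$; this coincides with the paper's argument (your preliminary use of Rellich to ``compute interior traces'' is redundant but harmless). Your nontriviality check in (1)$\Rightarrow$(2) (extension by zero plus unique continuation) is more roundabout than necessary --- if $\vecM=\vecJ=0$ the representation (\ref{ffp}) gives $\vecE_0=0$ in $D$ at once --- but it is not incorrect. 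Fix the far-field step as indicated and the proof is complete.
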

\begin{proof}
Assume $(1)$ holds. Then from the argument above (\ref{int}) we have that $\vecM$ and $\vecJ$  satisfy (\ref{int}) and hence it suffices to show $\vecE_0^{\infty}(\vecM,\vecJ)=0$ and $\vecE^{\infty}(\vecM,\vecJ)=0$. To this end, recall that $\vecE_0$ has the following representation
\begin{eqnarray}
\vecE_0(x) &=& \mcurl \int_{\Gamma} \vecM(y) \Phi_k(x,y) ds_y +\int_\Gamma \vecJ(y) \Phi_k(\cdot, y)ds_y \nonumber \\ 
&+&  \frac{1}{k^2}\nabla \int_\Gamma \mdiv_\Gamma  \vecJ( y) \Phi_k(\cdot,  y) ds_y\label{ffp}
\end{eqnarray}
where $\vecE_0 \times \nu=\vecE \times \nu =\vecM$ and $\mcurl\vecE_0 \times \nu=\mcurl\vecE \times \nu=\vecJ$. Then, from the jump relations (\ref{jumprelations1})-(\ref{jumprelations4}) of the vector potentials applied to (\ref{ffp}) and (\ref{int}) (see also \cite{ConH}), we obtain that $(\vecE_0\times \nu)^+=0$, $(\mcurl \vecE_0\times \nu)^+=0$ ($+$ denotes the traces from outside of $D$) and hence  the far field pattern $\vecE_0^{\infty}(\vecM,\vecJ)$ varnishes. The asymptotic expression of the fundamental solution $\Phi(\cdot,\cdot)$ in \cite{coltonkress} page 23,  yields (\ref{cr}). Similarly we can prove that $\vecE^{\infty}(\vecM,\vecJ)=0$. 
\\
Next assume that $(2)$ holds and  define
\begin{eqnarray}
\vecE_0(x) &=& \mcurl \int_{\Gamma} \vecM(y) \Phi_k(x,y) ds_y +\int_\Gamma \vecJ(y) \Phi_k(\cdot, y)ds_y \nonumber \\ 
&+&  \frac{1}{k^2}\nabla \int_\Gamma \mdiv_\Gamma \vecJ( y) \Phi_k(\cdot,  y) ds_y  \qquad x\in{\mathbb R}^3\setminus {\Gamma} \nonumber
\end{eqnarray}
with the same expression for $\vecE$ where $k$ is replaced by $k_1$. Again from the  jump relations of vector potentials and (\ref{int}) we have
\begin{eqnarray*}
&\mcurl \mcurl \vecE-k^2 n \vecE = 0, \quad \mcurl \mcurl \vecE_0-k^2 \vecE_0 = 0 & \qquad \qquad  \mbox{in}\quad  D\\
&\vecE \times \nu=\vecE_0 \times \nu, \quad \mcurl\vecE \times \nu=\mcurl\vecE_0 \times \nu& \qquad \qquad  \mbox{on}\quad  \Gamma
\end{eqnarray*}
(note that $\vecE$ and $\vecE_0$ are in $L^2(D)$. Therefore  it suffices to show $\vecE_0$ and $\vecE$ are non trivial. Assume to the contrary that $\vecE_0=\vecE=0$, and without loss of generality $\vecE^{\infty}(\vecM,\vecJ)=0$, then by Rellich's Lemma (see e.g. \cite{coltonkress}) $\vecE=0$ in $\R^3 \backslash \overline{D}$. Hence the jump relations imply $\vecM=0$ and $\vecJ=0$ which is a contradiction to the assumptions in $(2)$. This proves the theorem.
\end{proof} \proofend

The above discussion allows us to conclude that in order  to prove the discreteness of transmission eigenvalues we need to show that the kernel of the operator $\Lk$ is non-trivial for at most a discrete  set of wave numbers $k$.
\subsection{Properties of  the operator $\Lk$}
In the following, we will show the operator $\Lk$ is Fredholm of index zero and use the analytic Fredholm theory to obtain our main theorem. To this end we first show that for purely complex wave number $k:=i\kappa$, $\kappa>0$, $\Lk$ restricted to 
$$
 \vecH_0^{-\frac{3}{2},-\frac{1}{2}}(\mbox{div}, \Gamma):=\left\{ \vecu \in  \vecH^{-\frac{3}{2},-\frac{1}{2}}(\mbox{div}, \Gamma) ,\,\, \mdiv_{\Gamma}\vecu=0\right\}.
$$
satisfies the coercive property. In the following lemma we use the shorthand notation $\vecH_0(\Gamma):= \vecH_t^{-\frac{1}{2}}(\Gamma) \times \vecH_0^{-\frac{3}{2},-\frac{1}{2}}(\mbox{div}, \Gamma)$ and its dual space $\vecH^*(\Gamma):= \vecH_t^{\frac{1}{2}}(\Gamma) \times \left(\vecH_0^{-\frac{3}{2},-\frac{1}{2}}(\mbox{div}, \Gamma)\right)'$ where the dual  $ \left(\vecH_0^{-\frac{3}{2},-\frac{1}{2}}(\mbox{div}, \Gamma)\right)'$ of the  subspace $\vecH_0^{-\frac{3}{2},-\frac{1}{2}}(\mbox{div}, \Gamma)\subset \vecH^{-\frac{3}{2},-\frac{1}{2}}(\mbox{div}, \Gamma)$ is understood in the sense of the duality defined by Lemma \ref{dual}.
\begin{lemma} \label{coercive}
Let $\kappa>0$. The operator ${\bf L}(i\kappa):\vecH_0(\Gamma)\to \vecH^*(\Gamma)$ is strictly coercive, i.e.
\begin{eqnarray*}
\left|\left<{\bf L}(i\kappa)\left( \begin{array}{c} \vecM \\ \vecJ \end{array} \right),\left( \begin{array}{c} \vecM \\ \vecJ \end{array} \right) \right>\right|  \ge c\left(\| \vecM\|_{\vecH_t^{-\frac{1}{2}}(\Gamma)}+\|\vecJ \|_{ \vecH^{-\frac{3}{2},-\frac{1}{2}}(\mbox{\em div}, \Gamma) } \right),
\end{eqnarray*}
where $c$ is a constant depending only on $\kappa$.
\end{lemma}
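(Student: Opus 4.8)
The plan is to follow the scheme of \cite{ConH}: test $\langle{\bf L}(i\kappa)(\vecM,\vecJ),(\vecM,\vecJ)\rangle$ against the electromagnetic layer potentials generated by $\vecM$ and $\vecJ$, and convert it, via the jump relations of Lemma \ref{jumprelations} and the curl--curl Green identity, into a volume energy over $\R^3\setminus\Gamma$ that is manifestly coercive because $\kappa>0$. Fix $(\vecM,\vecJ)\in\vecH_0(\Gamma)$, so $\mdiv_\Gamma\vecJ=0$, and set $k=i\kappa$, $k_1=i\kappa\sqrt n$. On $\R^3\setminus\Gamma$ define
$$\vecP_0(\x)=\mcurl\int_\Gamma\vecM(\y)\Phi_k(\x,\y)\,ds_\y+\int_\Gamma\vecJ(\y)\Phi_k(\x,\y)\,ds_\y,$$
and $\vecP$ by the same formula with $k$ replaced by $k_1$ (the gradient term of the Stratton--Chu representation is absent since $\mdiv_\Gamma\vecJ=0$). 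By Lemma \ref{jumprelations}, $\vecP_0,\vecP\in\vecV(\mcurl^2,D^\pm)$; they satisfy $\mcurl\mcurl\vecP_0+\kappa^2\vecP_0=0$, resp. $\mcurl\mcurl\vecP+\kappa^2 n\,\vecP=0$, in $D^\pm$; they decay exponentially at infinity (the kernels $\Phi_{i\kappa},\Phi_{i\kappa\sqrt n}$ being real and exponentially decaying); and $[\gamma_\Gamma\vecP_0]=[\gamma_\Gamma\vecP]=\vecM$, $[\gamma_\Gamma\mcurl\vecP_0]=[\gamma_\Gamma\mcurl\vecP]=\vecJ$. Finally $\vecW:=\vecP-\vecP_0$ has the $C^\infty$ kernel $\Phi_{k_1}-\Phi_k$, so it extends smoothly across $\Gamma$.

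First I would record, exactly as in the derivation of (\ref{int}) and using $\mdiv_\Gamma\vecJ=0$, that $\gamma_\Gamma\mcurl\vecP_0=k\Tka\vecM+\Kka\vecJ$ and $\gamma_\Gamma\vecP_0=\Kka\vecM+\tfrac1k\Tka\vecJ$ (direct, i.e. average, boundary values), and likewise for $\vecP$ with $k$ replaced by $k_1$; subtracting gives ${\bf L}(i\kappa)(\vecM,\vecJ)=(\gamma_\Gamma\mcurl\vecW,\gamma_\Gamma\vecW)$. Consequently, by the pivot dualities of Lemma \ref{dual}, $\langle{\bf L}(i\kappa)(\vecM,\vecJ),(\vecM,\vecJ)\rangle=\langle\gamma_\Gamma\mcurl\vecW,\vecM\rangle+\langle\gamma_\Gamma\vecW,\vecJ\rangle$. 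I would then substitute $\vecM,\vecJ$ by the jumps above, apply Green's identity $\int_\Omega(|\mcurl\vecu|^2+\mu^2|\vecu|^2)\,d\x=\int_{\partial\Omega}\overline{(\nu\times\vecu)}\cdot\mcurl\vecu\,ds$ (valid since $\mcurl\mcurl\vecu=-\mu^2\vecu$, with $\mu^2=\kappa^2$ for $\vecP_0$ and $\mu^2=\kappa^2 n$ for $\vecP$) with $\Omega=D$ and $\Omega=B_R\setminus\overline D$ and let $R\to\infty$ so the $\partial B_R$ term drops by exponential decay, and use the splitting $a^-\overline{b^-}-a^+\overline{b^+}=\tfrac12[(a^-+a^+)\overline{b^--b^+}+(a^--a^+)\overline{b^-+b^+}]$ to obtain
$$\left\langle{\bf L}(i\kappa)\begin{pmatrix}\vecM\\\vecJ\end{pmatrix},\begin{pmatrix}\vecM\\\vecJ\end{pmatrix}\right\rangle=\int_{D\cup D^+}\!\!\big(|\mcurl\vecP|^2+\kappa^2 n|\vecP|^2\big)d\x-\int_{D\cup D^+}\!\!\big(|\mcurl\vecP_0|^2+\kappa^2|\vecP_0|^2\big)d\x.$$
Writing $\vecP=\vecP_0+\vecW$ and integrating the cross terms by parts once more (using $\mcurl\mcurl\vecP_0=-\kappa^2\vecP_0$ and that the tangential part of $[\mcurl\vecP_0]$ equals $\vecJ$, its normal part being annihilated by $\nu\times(\cdot)$), this difference rearranges, when $n>1$, into
$$\left\langle{\bf L}(i\kappa)\begin{pmatrix}\vecM\\\vecJ\end{pmatrix},\begin{pmatrix}\vecM\\\vecJ\end{pmatrix}\right\rangle=\int_{D\cup D^+}\!\!\Big(|\mcurl\vecW|^2+\kappa^2|\vecW|^2+\kappa^2(n-1)|\vecP|^2\Big)d\x,$$
and into the analogous expression with $\vecP_0$ and $(1-n)$ in place of $\vecP$ and $(n-1)$ when $0<n<1$ (expanding then $\vecP_0=\vecP-\vecW$ instead).

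In either case the right-hand side is a sum of nonnegative terms, hence its modulus is $\ge\kappa^2|n-1|\,\|\vecP\|^2_{\vecL^2(D\cup D^+)}$ (with $\vecP_0$ if $n<1$). To conclude I would control $\|(\vecM,\vecJ)\|$ by this energy: since $\vecM=[\gamma_\Gamma\vecP]$, $\vecJ=[\gamma_\Gamma\mcurl\vecP]$ and $\mcurl^2\vecP=-\kappa^2 n\vecP\in\vecL^2(D^\pm)$, the continuity of the trace maps $\gamma_\Gamma$ and $\gamma_\Gamma\mcurl$ on $\vecV(\mcurl^2,D^\pm)$ contained in the proof of Lemma \ref{jumprelations} gives $\|\vecM\|_{\vecH_t^{-1/2}(\Gamma)}+\|\vecJ\|_{\vecH_t^{-3/2}(\Gamma)}\le C(\kappa)\|\vecP\|_{\vecL^2(D\cup D^+)}$, and since $\mdiv_\Gamma\vecJ=0$ on $\vecH_0(\Gamma)$ the $\vecH^{-3/2,-1/2}(\mdiv,\Gamma)$-norm of $\vecJ$ reduces to its $\vecH_t^{-3/2}(\Gamma)$-norm; combining the two estimates yields the asserted strict coercivity. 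The hard part will be the second reduction — carrying the electromagnetic jump relations and the Lemma \ref{dual} duality through Green's identity so that the boundary contributions on $\Gamma$ cancel and the sign-definite volume identity emerges. Because the entries of ${\bf L}(k)$ have the mixed smoothing structure exhibited in (\ref{prop}) (a weakly singular vectorial part plus a $\nabla_\Gamma\circ S_k\circ\mdiv_\Gamma$ part), this bookkeeping is genuinely more delicate than in the scalar case of \cite{ConH}, and it is precisely to tame it that one restricts to the subspace $\vecH_0^{-3/2,-1/2}(\mdiv,\Gamma)$ of divergence-free densities.
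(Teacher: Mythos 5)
Your endgame is sound and in fact coincides with the paper's own final step: once one has a sign-definite volume quantity controlling $\|\vecP\|_{\vecL^2}$ (or $\|\vecP_0\|_{\vecL^2}$), the duality/lifting argument of Lemma \ref{jumprelations} does give $\|\vecM\|_{\vecH_t^{-1/2}(\Gamma)}+\|\vecJ\|_{\vecH_t^{-3/2}(\Gamma)}\le C(\kappa)\|\vecP\|_{\vecL^2(D\cup D^+)}$, and on $\vecH_0(\Gamma)$ the $\vecH^{-\frac32,-\frac12}(\mbox{div},\Gamma)$-norm of $\vecJ$ is its $\vecH_t^{-3/2}$-norm. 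The genuine gap is in your central reduction. First, the two energies you subtract are generically \emph{infinite} on $\vecH_0(\Gamma)$: for $\vecM\in\vecH_t^{-1/2}(\Gamma)$ one only has $\mdiv_\Gamma\vecM\in H^{-3/2}(\Gamma)$, so $\mcurl\vecP_0=k^2\int_\Gamma\Phi_k\vecM+\nabla S_k(\mdiv_\Gamma\vecM)+\mcurl\int_\Gamma\Phi_k\vecJ$ is not in $\vecL^2(D^\pm)$ (this is precisely why the paper works in $\vecV(\mcurl^2,D^\pm)$, where only $\vecu$ and $\mcurl^2\vecu$ are required to be square integrable, and never forms $\int|\mcurl\vecP_0|^2$). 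Hence the Green identity $\int_\Omega(|\mcurl\vecu|^2+\mu^2|\vecu|^2)=\int_{\partial\Omega}\overline{(\nu\times\vecu)}\cdot\mcurl\vecu$ cannot be applied to $\vecP$ or $\vecP_0$ separately, and an approximation by smooth densities only recovers the combined finite expression, not the two individually divergent energies. Second, even formally the identity ``pairing $=E[\vecP]-E[\vecP_0]$'' is not right: carrying out your splitting with $a=\mcurl\vecP\times\nu$, $b=\gamma_\Gamma\vecP$ and the jumps $[\vecP\times\nu]=\vecM$, $[\mcurl\vecP\times\nu]=\vecJ$ (so $[\gamma_\Gamma\vecP]=\nu\times\vecM$, a rotated density) gives $\int_\Gamma\bigl(-\gamma_\Gamma\mcurl\vecW\cdot\overline{\vecM}+\vecJ\cdot\overline{\gamma_\Gamma\vecW}\bigr)ds$: the two pairings enter with opposite relative signs, which is not the sesquilinear form $\bigl<{\bf L}(i\kappa)(\vecM,\vecJ),(\vecM,\vecJ)\bigr>$ of the Lemma.

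Interestingly, your \emph{final} volume identity is correct up to an overall sign (harmless, since the Lemma bounds a modulus): using $(\mcurl^2+\kappa^2)\vecW=(1-n)\kappa^2\vecP$ and $(\mcurl^2+n\kappa^2)\vecW=-(n-1)\kappa^2\vecP_0$ and integrating by parts only against $\vecW$ (which, together with $\mcurl\vecW$, is smooth across $\Gamma$ and exponentially decaying), one finds that the pairing equals $\pm(n-1)\kappa^2\int_{\R^3}\vecP\cdot\overline{\vecP_0}\,dx$, and that $\int(|\mcurl\vecW|^2+\kappa^2|\vecW|^2)+(n-1)\kappa^2\int|\vecP|^2=(n-1)\kappa^2\int\vecP\cdot\overline{\vecP_0}$, which is your expression. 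But making this rigorous for densities in $\vecH_0(\Gamma)$ requires pairing $\vecW$ with $\vecP_0\in\vecV(\mcurl^2,D^\pm)$ through the duality-defined traces, i.e.\ essentially the device the paper uses: it introduces the fourth-order transmission problem $(\mcurl^2+n\kappa^2)(\mcurl^2+\kappa^2)\vecU=0$ with jump data $(n-1)\kappa^2(\vecM,\vecJ)$, proves coercivity of its variational form on $\vecV$ by Lax--Milgram, identifies the solution with the difference potential $\vecU=\vecW$, and only then runs the lifting estimates you also invoke. So while your target identity and conclusion are correct, the derivation as proposed rests on an ill-defined intermediate quantity and a mis-signed identity, and fixing it leads you back to (a second-order repackaging of) the paper's argument rather than an independent proof.
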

\begin{proof}
We consider the following problem: for given $(\vecM, \vecJ) \in \vecH_t^{-\frac{1}{2}}(\Gamma) \times \vecH_0^{-\frac{3}{2},-\frac{1}{2}}(\mbox{div}, \Gamma) $ find $\vecU \in \vecL^2(\R^3)$, $\mcurl \vecU \in \vecL^2(\R^3)$, $\mcurl^2 \vecU \in \vecL^2(\R^3)$  such that
\begin{eqnarray}
(\mcurl^2 + n\kappa^2)(\mcurl^2 + \kappa^2) \vecU = 0 \quad &\mbox{in}& \quad \R^3 \backslash \Gamma \label{coercive_pde1} \\ \,
[ \nu \times \mcurl^2 \vecU ]= (n\kappa^2-\kappa^2)\vecM \quad &\mbox{on}& \quad \Gamma \label{coercive_pde2}\\ \,
[ \nu \times \mcurl^3 \vecU ]= (n\kappa^2-\kappa^2)\vecJ \quad &\mbox{on}& \quad \Gamma \label{coercive_pde3}
\end{eqnarray}
where $[\cdot]$ denotes the jump across $\Gamma$. Multiplying (\ref{coercive_pde1}) by a test function $\vecW$ and integrating by parts yield
\begin{eqnarray} 
&&\int_{\R^3 \backslash \Gamma} (\mcurl^2 + n\kappa^2)\vecU \cdot (\mcurl^2 + \kappa^2)\overline{\vecW}  dx\nonumber\\
&&\hspace*{3cm}= (n\kappa^2-\kappa^2)\left( \int_{\Gamma} \gamma_{\Gamma} \mcurl \overline{\vecW} \cdot \vecM ds +\int_{\Gamma} \gamma_{\Gamma}\overline{\vecW}  \cdot \vecJ ds\right) \label{coercive_varitional}
\end{eqnarray}
First we show that the right hand side is well defined. Note that $\mdiv(\mcurl \vecW)=0$, hence from \cite{nedelec}  $\mcurl \vecW \in \vecH^1(\R^3)$ and thus $\gamma_{\Gamma} \mcurl \vecW \in \vecH_t^{\frac{1}{2}}(\Gamma)$, which implies  $\int_{\Gamma} \gamma_{\Gamma} \mcurl \overline{\vecW} \cdot \vecM ds$ is defined in $\vecH_t^{\frac{1}{2}}(\Gamma)$, $\vecH_t^{-\frac{1}{2}}(\Gamma)$ duality. Since $\gamma_{\Gamma}\vecW \in \vecH_t^{-\frac{1}{2}}(\Gamma)$ and  $\mcurl_{\Gamma}\vecW=\gamma_{\Gamma} \mcurl \vecW \in \vecH_t^{\frac{1}{2}}(\Gamma)$ then from Lemma \ref{dual} $\int_{\Gamma} \gamma_{\Gamma}\overline{\vecW}  \cdot \vecJ ds$ is well defined. 

Now let 
$$
\vecV:= \{ \vecU \in \vecL^2(\R^3), \mcurl \vecU \in \vecL^2(\R^3), \mcurl^2 \vecU \in \vecL^2(\R^3)  \}
$$
equipped with the norm
$$
\| \vecU \|_{\vecV}^2= \int_{\R^3} (|\mcurl^2 \vecU|^2+|\mcurl \vecU|^2+|\vecU|^2)dx.
$$
Next taking $\vecW=\vecU$ in the continuous sesquilinear form in the left-hand side of (\ref{coercive_varitional}), and after integrating by parts  (note that $\vecU$ and $\mcurl \vecU$ are continuous across $\Gamma$, we obtain
\begin{eqnarray*}
&&\int_{\R^3 \backslash \Gamma} (\mcurl^2 + n\kappa^2)\vecU \cdot (\mcurl^2 + \kappa^2)\overline{\vecU} dx \\
=&& \int_{\R^3} (|\mcurl^2 \vecU|^2+(n\kappa^2+\kappa^2)|\mcurl \vecU|^2+n\kappa^2\kappa^2|\vecU|^2) dx\geq c\| \vecU \|_{\vecV}
\end{eqnarray*}
where $c$ is a constant depending on $\kappa$. The Lax-Milgram lemma guaranties the existence of a  unique solution to (\ref{coercive_varitional}). Up to here we did not need that $\mdiv_{\Gamma} \vecJ=0$. Next we define
\begin{eqnarray}
\vecU&=& \mcurl \int_{\Gamma} \vecM(y) (\Phi_{\sqrt{n}\kappa}(\cdot,y)-\Phi_{\kappa}(\cdot,y)) ds +\int_\Gamma \vecJ(y)  (\Phi_{\sqrt{n}\kappa}(\cdot,y)-\Phi_{\kappa}(\cdot,y)) ds \nonumber \\ 
&+& \frac{1}{(i\sqrt{n}\kappa)^2}\nabla \int_\Gamma \mdiv_\Gamma  \vecJ (y) \Phi_{\sqrt{n}\kappa}(\cdot, y) ds-\frac{1}{(i\kappa)^2}\nabla \int_\Gamma \mdiv_\Gamma  \vecJ (y) \Phi_{\kappa}(\cdot, y) ds,  \nonumber
\end{eqnarray}
then $\vecU \in \vecL^2(\R^3)$, $\mcurl \vecU \in \vecL^2(\R^3)$, $\mcurl^2 \vecU \in \vecL^2(\R^3)$ and satisfies (\ref{coercive_pde1})-(\ref{coercive_pde3}), hence $\vecU$ defined above is the unique solution to (\ref{coercive_varitional}). Now for a given $\gamma_\Gamma \mcurl {\vecW} \in \vecH^{\frac{1}{2}}(\Gamma)$, let  us construct a lifting function  $\tilde{\vecW} \in \vecH^2(\R^3)$ \cite{haddar}  such that $\gamma_\Gamma \mcurl \tilde{\vecW} =\gamma_\Gamma \mcurl {\vecW}$, $\gamma_\Gamma \tilde{\vecW}=0$ and $\| \tilde{\vecW} \|_{\vecH^2(\R^3)} \le c\|  \gamma_\Gamma \mcurl \tilde{\vecW}\|_{\vecH^{\frac{1}{2}}(\Gamma)}$ for some constant $c$. Then
\begin{eqnarray*}
&&\hspace*{-1cm}\left|\int_{\Gamma} \gamma_{\Gamma} \mcurl \vecW \cdot \vecM ds\right| =\left|\int_{\Gamma} \gamma_{\Gamma} \mcurl \tilde\vecW \cdot \vecM ds\right|\\
&&=\frac{1}{|n\kappa^2-\kappa^2|}\left|\int_{\R^3 \backslash \Gamma} (\mcurl^2 + n\kappa^2)\vecU \cdot (\mcurl^2 + \kappa^2)\tilde \vecW dx\right| \\
&&\leq\|\vecU\|_{\vecV} \|\tilde{\vecW}\|_{\vecV}\le c\|\vecU\|_{\vecV} \|  \gamma_\Gamma \mcurl \tilde{\vecW}\|_{\vecH^{\frac{1}{2}}(\Gamma)}.
\end{eqnarray*}
Hence $\|\vecM\|_{\vecH_t^{-\frac{1}{2}}(\Gamma)}\le c \|\vecU\|_{\vecV}$. Similarly for given $\gamma_\Gamma \vecW \in \vecH^{\frac{3}{2}}(\Gamma)$ we construct the lifting $\tilde{\vecW}_2 \in \vecH^2(\R^3)$ \cite{haddar}  such that $\gamma_\Gamma \tilde{\vecW}_2 =\gamma_\Gamma \vecW$, $\gamma_\Gamma \mcurl \tilde{\vecW_2}=0$ and $\| \tilde{\vecW_2} \|_{\vecH^2(\R^3)} \le c\|  \gamma_T \tilde{\vecW_2}\|_{\vecH^{\frac{3}{2}}(\Gamma)}$ for some  constant $c$. We recall that $\mdiv_{\Gamma} \vecJ=0$ hence  from the Helmoltz decomposition $\vecJ=\overrightarrow{\mcurl}_{\Gamma} q \in \vecH^{-\frac{3}{2}}(\Gamma)$. Thus we have
\begin{eqnarray*}
&&\hspace*{-1cm}\left|\int_{\Gamma} \gamma_{\Gamma} \vecW \cdot \vecJ ds\right| =\left|\int_{\Gamma} \gamma_{\Gamma} \tilde{\vecW}_2 \cdot \vecJ ds\right| \\
&&=\frac{1}{|n\kappa^2-\kappa^2|}\left|\int_{\R^3 \backslash \Gamma} (\mcurl^2 + n\kappa^2)\vecU \cdot (\mcurl^2 + \kappa^2)\tilde{\vecW}_2 dx\right| \\
&&\le c\|\vecU\|_{\vecV} \|\tilde{\vecW}_2\|_{\vecV} \le c\|\vecU\|_{\vecV} \|  \gamma_T \vecW\|_{\vecH^{\frac{3}{2}}(\Gamma)}.
\end{eqnarray*}
Since $\vecJ=\overrightarrow{\mcurl}_{\Gamma} q \in \vecH^{-\frac{3}{2}}(\Gamma)$, then by duality $\|\vecJ\|_{ \vecH_0^{-\frac{3}{2},-\frac{1}{2}}(\mbox{div}, \Gamma)}\le c \|\vecU\|_{\vecV}$.

Finally 
\begin{eqnarray*}
&&\hspace*{-1cm}\left|\left<{\bf L}(i\kappa)\left( \begin{array}{c} \vecM \\ \vecJ \end{array} \right),\left( \begin{array}{c} \vecM \\ \vecJ \end{array} \right)\right>\right| \\
&&= \left \| \int_{\Gamma} \gamma_{\Gamma} \mcurl \vecU \cdot \overline{\vecM} ds +\int_{\Gamma} \gamma_{\Gamma}\vecU \cdot \overline{\vecJ} ds \right \| \\
&&\geq c\|\vecU\|_{\vecV} \geq c \left(\| \vecM\|_{\vecH_t^{-\frac{1}{2}}(\Gamma)}+\|\vecJ \|_{ \vecH^{-\frac{3}{2},-\frac{1}{2}}(\mbox{div}, \Gamma) } \right).
\end{eqnarray*}
where $c$ is a constant depending on $\kappa$. This proves our lemma.
\end{proof} \proofend

Next we proceed with the following lemma.
\begin{lemma} \label{compact}
Let $\gamma(k):=\frac{k_1^2-k^2}{|k_1|^2-|k|^2}$ and $k_1=k\sqrt{n}$ for $k\in{\mathbb C}\setminus {\mathbb R}_{-}$. Then $\Lk + \gamma(k) \vecL (i|k|): \vecH_0(\Gamma)\to \vecH^*(\Gamma)$ is compact.
\end{lemma}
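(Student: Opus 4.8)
The idea is to compare $\Lk$ with $\vecL(i|k|)$ term by term, using the explicit representations (\ref{prop}) for the diagonal blocks and the fact that the off-diagonal block $\Kkw-\Kka$ is already a smoothing operator of order $2$. Write $\Lk + \gamma(k)\vecL(i|k|)$ as a $2\times 2$ operator matrix and examine each entry. For the $(1,2)$ and $(2,1)$ entries we must show that $(\Kkw-\Kka) + \gamma(k)(\Kk_{i|k|\sqrt n} - \Kk_{i|k|})$ is compact from $\vecH_t^{-3/2}(\Gamma)$ into $\vecH_t^{1/2}(\Gamma)$ (resp. from $\vecH^{-3/2,-1/2}(\mathrm{div},\Gamma)$ with an extra surface-curl estimate); but since each of $\Kkw-\Kka$, $\Kk_{i|k|\sqrt n}-\Kk_{i|k|}$ is individually smoothing of order $2$, i.e. gains two derivatives, the whole combination maps $\vecH_t^{-3/2}(\Gamma)\to\vecH_t^{1/2}(\Gamma)$ and, crucially, maps into $\vecH_t^{1/2+\delta}(\Gamma)$ for a small $\delta>0$ (the kernel $\Phi_{k_1}-\Phi_k$ is smooth, so the difference is infinitely smoothing; this is where the "order $2$" is not tight). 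The compact embedding $\vecH_t^{1/2+\delta}(\Gamma)\hookrightarrow\vecH_t^{1/2}(\Gamma)$ then gives compactness of these blocks, and $\gamma(k)$ is just a bounded scalar.

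For the diagonal entries the point of the particular choice $\gamma(k)=\dfrac{k_1^2-k^2}{|k_1|^2-|k|^2}$ is the following cancellation. Using (\ref{prop}), the $(1,1)$ entry is
\begin{eqnarray*}
&&\big(k_1^2\Skw - k^2\Ska\big) - \gamma(k)\big((i|k|\sqrt n)^2{\bf S}_{i|k|\sqrt n} - (i|k|)^2{\bf S}_{i|k|}\big) \\
&&\qquad +\ \nabla_\Gamma\circ\Big[(S_{k_1}-S_k) - \gamma(k)(S_{i|k|\sqrt n}-S_{i|k|})\Big]\circ\mathrm{div}_\Gamma .
\end{eqnarray*}
In the principal symbol of $k_1^2\Skw-k^2\Ska$ the wave-number-dependence enters only through the scalar factor $k_1^2-k^2 = (n-1)k^2$, and the symbol of ${\bf S}_\kappa$ depends on $|\kappa|$ only through lower-order (in fact smoothing) terms; the same is true for $\nabla_\Gamma\circ(S_{k_1}-S_k)\circ\mathrm{div}_\Gamma$, whose leading part carries the scalar factor $\tfrac1{k_1^2}-\tfrac1{k^2}$ or, after the rescaling built into (\ref{prop}), again $k_1^2-k^2$ up to a unit modulus. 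Since $\gamma(k)(|k_1|^2-|k|^2) = k_1^2-k^2$, the leading (non-compact) parts cancel exactly, and what remains is a difference of single-layer-type operators with the \emph{same} principal symbol, hence an operator that is smoothing of one order higher than each summand. One then argues, as for the off-diagonal blocks, that this residual maps into a space compactly embedded in the target, using the mapping properties of $\Ska$, $\Skw-\Ska$ quoted from \cite{ConH} together with the smoothness of $\Phi_{k_1}-\Phi_k$ and of $\Phi_{i|k|\sqrt n}-\Phi_{i|k|}$. The $(2,2)$ entry is handled identically, with the roles of $k^2\Ska$ and $\tfrac1{k^2}\Ska$ interchanged; here one must additionally track the $\mcurl_\Gamma$ of the result to land in $\vecH^{-1/2,1/2}(\mathrm{curl},\Gamma)$, but since $\mcurl_\Gamma\circ\nabla_\Gamma=0$ the gradient terms drop out of that estimate and only the $({\bf S}_{k_1}-{\bf S}_k)$-type term contributes, which is smoothing of order $3$.

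The main obstacle is making the "cancellation of principal symbols" rigorous in the right scale of tangential Sobolev spaces: one has to verify that the \emph{non-scalar} $k$-dependence of ${\bf S}_\kappa$, $S_\kappa$ (the dependence on $|\kappa|$ beyond the overall $e^{i\kappa|x-y|}$ factor) really is one order smoother than the operator itself, uniformly, so that after subtracting $\gamma(k)\vecL(i|k|)$ the remaining operator gains a genuine extra fractional derivative and not merely the same order. The cleanest route is to expand $\Phi_\kappa(x,y) = \tfrac{1}{4\pi|x-y|} + \tfrac{i\kappa}{4\pi} + O(|x-y|)$; the singular part $\tfrac{1}{4\pi|x-y|}$ is $\kappa$-independent, so any difference $\Phi_{\kappa_1}-\Phi_{\kappa_2}$ has a \emph{bounded} kernel, and iterating, $\Phi_{\kappa_1}-\Phi_{\kappa_2} - (\text{const})(\kappa_1-\kappa_2)$ has a Lipschitz kernel, etc. Feeding this expansion into the expressions (\ref{prop}) for $k_1\Tkb-k\Tka$ and for $\gamma(k)\vecL(i|k|)$ and using $k_1^2-k^2=\gamma(k)(|k_1|^2-|k|^2)$ shows the difference has a kernel smooth enough that the associated integral operator is compact between the stated spaces. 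Assembling the four blocks then yields that $\Lk+\gamma(k)\vecL(i|k|):\vecH_0(\Gamma)\to\vecH^*(\Gamma)$ is compact. \proofend
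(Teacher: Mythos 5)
Your treatment of the diagonal blocks is essentially the paper's argument: the leading parts of $k_1^2\Skb-k^2\Ska$ and of $\nabla_\Gamma\circ(S_{k_1}-S_k)\circ\mdiv_\Gamma$ carry the scalar factor $k_1^2-k^2$, and since $\gamma(k)(|k_1|^2-|k|^2)=k_1^2-k^2$ they cancel against the corresponding parts of $\gamma(k)\vecL(i|k|)$; the paper implements exactly this via the splitting with ${\bf S}_0$ and by citing the compactness of $(S_{k_1}-S_k)+\gamma(k)(S_{i|k_1|}-S_{i|k|}):H^{-\frac32}(\Gamma)\to H^{\frac32}(\Gamma)$ (Theorem 3.8 of \cite{ConH}), which is what your kernel expansion $\Phi_\kappa=\frac{1}{4\pi|x-y|}+\frac{i\kappa}{4\pi}-\frac{\kappa^2}{8\pi}|x-y|+\dots$ would reprove.

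However, there is a genuine gap in your treatment of the off-diagonal blocks (and it also infects your remark on the $(2,2)$ entry). You assert that $\Kkb-\Kka$ is compact by itself because ``the kernel $\Phi_{k_1}-\Phi_k$ is smooth, so the difference is infinitely smoothing.'' This is false: $\Phi_{k_1}-\Phi_k$ is continuous across the diagonal but not smooth, since its expansion contains the term $-\frac{k_1^2-k^2}{8\pi}|x-y|$, whose higher derivatives are singular. Consequently $\Skb-\Ska$ and $\Kkb-\Kka$ are smoothing of orders exactly $3$ and $2$, with leading (order $-3$, resp.\ order $-2$) parts proportional to $k_1^2-k^2\neq 0$. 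The mappings required here, $\vecH_t^{-\frac32}(\Gamma)\to\vecH_t^{\frac12}(\Gamma)$ for the $(1,2)$ block, $\vecH_t^{-\frac12}(\Gamma)\to\vecH_t^{\frac32}(\Gamma)$ for the $(2,1)$ block, and $\vecH_t^{-\frac32}(\Gamma)\to\vecH_t^{\frac32}(\Gamma)$ for ${\bf S}_{k_1}-{\bf S}_k$ in the $(2,2)$ block, use these orders at the borderline, so boundedness holds but compactness does not follow for the individual differences; an operator of exact order $-2$ (or $-3$) with nonvanishing leading part is not compact between spaces separated by exactly $2$ (or $3$) derivatives. This is precisely why the specific $\gamma(k)$ appears in the statement: the cancellation $-\frac{k_1^2-k^2}{8\pi}|x-y|+\gamma(k)\frac{|k_1|^2-|k|^2}{8\pi}|x-y|=0$ is needed in \emph{every} block, and the paper accordingly establishes compactness of $(\Kkb-\Kka)+\gamma(k)({\bf K}_{i|k_1|}-{\bf K}_{i|k|})$ and of $(\Skb-\Ska)+\gamma(k)({\bf S}_{i|k_1|}-{\bf S}_{i|k|})$, not of the individual differences. (Had your ``infinitely smoothing'' claim been true, $\Lk$ itself would already be compact and the choice of $\gamma(k)$ irrelevant, which should have been a warning sign.) Your final kernel-expansion paragraph, if applied uniformly to all four blocks, would repair this, but as written the first paragraph's argument for the off-diagonal entries does not stand.
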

\begin{proof}
From \cite{ConH} Theorem 3.8,   the operator
$$
(\Skb-\Ska)+\gamma(k)({\bf S}_{i|k_1|}-{\bf S}_{i|k|}): \vecH^{-\frac{3}{2}}(\Gamma) \to \vecH^{\frac{3}{2}}(\Gamma)
$$
is compact. Then from (\ref{prop}) we have
$$
\nabla_{\Gamma} \circ (S_{k_1}-S_{k}) \circ \mdiv_{\Gamma} + \gamma(k) \nabla_{\Gamma} \circ ({S}_{i|k_1|}-{S}_{i|k|}) \circ \mdiv_{\Gamma} : \vecH^{-\frac{1}{2}}(\Gamma) \to \vecH^{\frac{1}{2}}(\Gamma) 
$$
$$
(\Kkb-\Kka)+\gamma(k) ({\bf K}_{i|k_1|}-{\bf K}_{i|k|}) : \vecH^{-\frac{3}{2}}(\Gamma) \to \vecH^{\frac{1}{2}}(\Gamma) 
$$
$$
(\Kkb-\Kka)+\gamma(k) ({\bf K}_{i|k_1|}-{\bf K}_{i|k|})  : \vecH^{-\frac{1}{2}}(\Gamma) \to \vecH^{\frac{3}{2}}(\Gamma) 
$$
$$
\left(\frac{1}{k_1}\Kkb-\frac{1}{k}\Kka\right)+\gamma(k)\left(\frac{1}{i|k_1|}{\bf K}_{i|k_1|}-\frac{1}{i|k|}{\bf K}_{i|k|}\right)  : \vecH^{-\frac{3}{2}}(\Gamma) \to \vecH^{\frac{3}{2}}(\Gamma) 
$$
are compact. It remains to show that  
$$
({k_1}^2\Skb-k^2\Ska)+\gamma(k) ((i|k_1|)^2{\bf S}_{i|k_1|}-(i|k|)^2{\bf S}_{i|k|})  : \vecH^{-\frac{1}{2}}(\Gamma) \to \vecH^{\frac{1}{2}}(\Gamma)$$
is compact. Since 
\begin{eqnarray*}
&&({k_1}^2\Skb-k^2\Ska)+\gamma(k) ((i|k_1|)^2{\bf S}_{i|k_1|}-(i|k|)^2{\bf S}_{i|k|}) \\
=&&({k_1}^2(\Skb-{\bf S}_0)-k^2(\Ska-{\bf S}_0))+\gamma(k) ((i|k_1|)^2({\bf S}_{i|k_1|}-{\bf S}_0)-(i|k|)^2({\bf S}_{i|k|}-{\bf S}_0))
\end{eqnarray*}
and $\Ska-{\bf S}_0$ is compact, then the compactness follows. Hence the proof of the lemma is completed.
\end{proof} \proofend

In order to handle the non divergence free part of  $\vecJ$, we will split  $\vecJ:= \vecQ+\vecP$ where  $\vecQ \in  \vecH_0^{-\frac{3}{2},-\frac{1}{2}}(\mbox{div}, \Gamma)$, $\vecP=\nabla_{\Gamma} p \in \vecH_t^{\frac{1}{2}}(\Gamma)$ and rewrite the equation (\ref{int}) for the unknowns $(\vecM, \vecQ, \vecP)$.  To this end let us define
$$\vecH_1(\Gamma):=\left\{ \vecP \in \vecH_t^{\frac{1}{2}}(\Gamma), \mcurl_\Gamma \vecP=0 \right\}$$
and  introduce the operator 
\begin{align} \label{int_tLk}
\tLk=
\left( \begin{array}{ccc}
k_1\Tkb-k\Tka & \Kkb-\Kka & \Kkb-\Kka \\
\Kkb-\Kka & \Skb-\Ska & \Skb-\Ska \\
\Kkb-\Kka & \Skb-\Ska & (\Skb-\Ska)+\nabla_{\Gamma} \circ (\frac{1}{k_1^2}\Skb-\frac{1}{k^2}\Ska) \circ \mdiv_{\Gamma}
\end{array} \right).
\end{align}
From from Lemma \ref{dual} and Lemma \ref{bddLk}  $\tLk:\vecH_0(\Gamma)\times \vecH_1(\Gamma)\to \vecH^*(\Gamma)\times \vecH^{-\frac{1}{2}}(\Gamma)$ is  bounded  and furthermore the family of operators $\tLk$ depends analytically on $k \in \C \backslash \R_{-}$, where recall $\vecH_0(\Gamma):= \vecH_t^{-\frac{1}{2}}(\Gamma) \times \vecH_0^{-\frac{3}{2},-\frac{1}{2}}(\mbox{div}, \Gamma)$ with its dual $\vecH^*(\Gamma)$.
We first notice that (\ref{int}) is equivalent to the following:
\begin{eqnarray*}
\left< \Lk \left( \begin{array}{c} \vecM \\ \vecJ \end{array} \right),\left( \begin{array}{c} \tilde\vecM \\ \tilde \vecJ \end{array} \right) \right> =0,
\end{eqnarray*}
for any $(\tilde \vecM, \tilde \vecJ) \in \vecH_t^{\frac{1}{2}}(\Gamma) \times \vecH^{-\frac{1}{2},\frac{1}{2}}(\mbox{curl}, \Gamma) $ which equivalently can be written as 
\begin{eqnarray*}
\left< \tLk \left( \begin{array}{c} \vecM \\ \vecQ \\ \vecP \end{array} \right),\left( \begin{array}{c} \tilde\vecM\\ \tilde\vecQ \\ \tilde\vecP \end{array} \right) \right> =0,
\end{eqnarray*}
for any $(\tilde \vecM, \tilde \vecQ, \tilde \vecP) \in \vecH^*\times \vecH_t^{-\frac{1}{2}}(\Gamma)$.
Now we are ready to prove the following lemma.
\begin{lemma} \label{fredholm}
The operator   $\tLk:\vecH_0(\Gamma)\times \vecH_1(\Gamma)\to \vecH^*(\Gamma)\times \vecH^{-\frac{1}{2}}(\Gamma)$ is Fredholm with index zero, i.e. it can be written as a sum of an invertible operator and a compact operator.
\end{lemma}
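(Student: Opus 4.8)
The plan is to exhibit $\tLk$ as an isomorphism plus a compact operator. Write $\tLk$ in block form as
$$
\tLk=\left(\begin{array}{cc}\mathcal{A}(k)&\mathcal{B}(k)\\ \mathcal{C}(k)&\mathcal{D}(k)\end{array}\right),
$$
where $\mathcal{A}(k)$ collects the action of $(\vecM,\vecQ)$ onto the first two target factors, $\mathcal{B}(k)=(\Kkb-\Kka,\ \Skb-\Ska)^{\top}$ is the third column (the action of $\vecP$), $\mathcal{C}(k)=(\Kkb-\Kka,\ \Skb-\Ska)$ is the third row, and $\mathcal{D}(k)=(\Skb-\Ska)+\nabla_\Gamma\circ(\tfrac{1}{k_1^2}\Skb-\tfrac{1}{k^2}\Ska)\circ\mdiv_\Gamma$. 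The first observation is that, viewed as a map $\vecH_0(\Gamma)\to\vecH^*(\Gamma)$, the block $\mathcal{A}(k)$ coincides with $\Lk$ restricted to $\vecH_0(\Gamma)$: on $\vecQ$ with $\mdiv_\Gamma\vecQ=0$ one has $(\tfrac{1}{k_1}\Tkb-\tfrac{1}{k}\Tka)\vecQ=(\Skb-\Ska)\vecQ$ by (\ref{prop}), so the $(\vecM,\vecQ)$--block of $\tLk$ is exactly (\ref{lk}) restricted to the divergence--free subspace.

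Next I would show $\mathcal{A}(k)$ is Fredholm of index zero. By Lemma~\ref{compact}, $\mathcal{A}(k)+\gamma(k)\,\vecL(i|k|)=\Lk|_{\vecH_0(\Gamma)}+\gamma(k)\,\vecL(i|k|):\vecH_0(\Gamma)\to\vecH^*(\Gamma)$ is compact, and since $k_1=k\sqrt{n}$ with $n>0$, $n\neq1$, a direct computation gives $\gamma(k)=\tfrac{(n-1)k^2}{(n-1)|k|^2}=\tfrac{k^2}{|k|^2}$, hence $|\gamma(k)|=1$, so $\gamma(k)\neq0$ for every $k\in\C\setminus\R_{-}$. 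By Lemma~\ref{coercive} the bounded sesquilinear form of $\vecL(i|k|)$ on the Hilbert space $\vecH_0(\Gamma)$ is strictly coercive, so the Lax--Milgram lemma makes $\vecL(i|k|):\vecH_0(\Gamma)\to\vecH^*(\Gamma)$ an isomorphism, and then so is $\gamma(k)\,\vecL(i|k|)$. Therefore $\mathcal{A}(k)=-\gamma(k)\,\vecL(i|k|)+(\text{compact})$ is Fredholm of index zero.

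It remains to handle the blocks produced by the curl--free unknown $\vecP=\nabla_\Gamma p$. The entries of $\mathcal{B}(k)$ and $\mathcal{C}(k)$ involve only $\Kkb-\Kka$ and $\Skb-\Ska$, which by \cite{ConH} are smoothing of orders $2$ and $3$; composed with the embeddings of $\vecH_1(\Gamma)$ and $\vecH_0(\Gamma)$ into the relevant trace spaces they map into the target factors with a strict gain of regularity, hence are compact. For the diagonal block, $\tfrac{1}{k_1^2}\Skb-\tfrac{1}{k^2}\Ska=(\tfrac{1}{k_1^2}-\tfrac{1}{k^2})\Ska+\tfrac{1}{k_1^2}(\Skb-\Ska)$, and discarding the smoothing pieces (the $\Skb-\Ska$ terms) leaves $\mathcal{D}(k)$ equal, modulo compact, to its principal part $\mathcal{D}_0(k)=(\tfrac{1}{k_1^2}-\tfrac{1}{k^2})\,\nabla_\Gamma\circ\Ska\circ\mdiv_\Gamma$, where $\tfrac{1}{k_1^2}-\tfrac{1}{k^2}=\tfrac{1-n}{n k^2}\neq0$. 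Since $\mdiv_\Gamma\nabla_\Gamma=\Delta_\Gamma$, since $\Ska$ is an elliptic pseudo--differential operator of order $-1$ (with the positive invertible principal symbol of the Laplace single layer operator), and since $\nabla_\Gamma$, $\Delta_\Gamma$ act as isomorphisms between the corresponding scalar spaces and $\vecH_1(\Gamma)$ modulo constants, an eigensystem analysis on $\Gamma$ shows $\mathcal{D}_0(k)$, hence $\mathcal{D}(k)$, is Fredholm of index zero onto the last target factor.

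Finally, since $\mathcal{B}(k)$ and $\mathcal{C}(k)$ are compact, $\tLk$ differs by a compact operator from $\mathrm{diag}(\mathcal{A}(k),\mathcal{D}(k))$, which is Fredholm of index zero because both diagonal blocks are; hence $\tLk$ is Fredholm of index zero, and since any such operator between Hilbert spaces is the sum of an invertible operator and a finite--rank (in particular compact) one, the stated form follows. The hard part is the block $\mathcal{D}(k)$: one has to pin down exactly the mixed--order trace spaces (those set up via Lemma~\ref{dual}) between which $\vecP$ is mapped and then prove that, after removing the smoothing contributions, $\nabla_\Gamma\circ\Ska\circ\mdiv_\Gamma$ is invertible modulo compact onto the correct space; verifying the compactness of $\mathcal{B}(k)$ and $\mathcal{C}(k)$ into those same mixed--order targets relies on the delicate mapping properties from \cite{ConH} used already in Lemma~\ref{bddLk}.
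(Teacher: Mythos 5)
Your proposal is correct and follows essentially the same route as the paper: both rest on Lemma \ref{compact} combined with the coercivity of ${\bf L}(i|k|)$ from Lemma \ref{coercive} (via Lax--Milgram) for the $(\vecM,\vecQ)$ block, treat the third row and column as compact using the smoothing orders of $\Skb-\Ska$ and $\Kkb-\Kka$, and reduce the $(3,3)$ block to $\nabla_\Gamma\circ S\circ\mdiv_\Gamma$ with a nonzero constant factor. The only cosmetic difference is that you keep $S_k$ and do Fredholm-index bookkeeping block by block, whereas the paper freezes the third block at the invertible ${\bf S}_0$ (equivalent modulo the compactness of ${\bf S}_k-{\bf S}_0$, which the paper itself invokes) so as to write $\tLk$ directly as an invertible operator $\tilde{\bf L}_1(k)$ plus a compact one $\tilde{\bf L}_2(k)$.
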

\begin{proof}
We rewrite the operator $\tLk$ as follows 
\begin{align} \label{int_tLk}
\tLk&=
-\left( \begin{array}{ccc}
\gamma(k)(i|k_1|{\bf T}_{i|k_1|}-i|k|{\bf T}_{i|k|} )&\gamma(k)( {\bf K}_{i|k_1|}-{\bf K}_{i|k|} )& 0 \\
\gamma(k)( {\bf K}_{i|k_1|}-{\bf K}_{i|k|} )& \gamma(k) ({\bf S}_{i|k_1|}-{\bf S}_{i|k|} )& 0 \\
0 & 0 & \nabla_{\Gamma} \circ (-\frac{1}{k_1^2}+\frac{1}{k^2}) {\bf S}_0 \circ \mdiv_{\Gamma}
\end{array} \right)  \nonumber \\
&+
\left( \begin{array}{ccc}
\gamma(k)\left(i|k_1|{\bf T}_{i|k_1|}-i|k|{\bf T}_{i|k|}\right) & \gamma(k)\left( {\bf K}_{i|k_1|}-{\bf K}_{i|k|} \right) & 0 \\
\gamma(k)\left( {\bf K}_{i|k_1|}-{\bf K}_{i|k|} \right) & \gamma(k)\left( {\bf S}_{i|k_1|}-{\bf S}_{i|k|} \right) & 0 \\
0 & 0 &  \nabla_{\Gamma} \circ (-\frac{1}{k_1^2}+\frac{1}{k^2}) {\bf S}_0 \circ \mdiv_{\Gamma}
\end{array} \right)  \nonumber \\
&+\left( \begin{array}{ccc}
k_1\Tkb-k\Tka & \Kkb-\Kka & \Kkb-\Kka \\
\Kkb-\Kka & \Skb-\Ska & \Skb-\Ska \\
\Kkb-\Kka & \Skb-\Ska & (\Skb-\Ska)+\nabla_{\Gamma} \circ (\frac{1}{k_1^2}\Skb-\frac{1}{k^2}\Ska) \circ \mdiv_{\Gamma}
\end{array} \right)   \nonumber \\
&=:
\tilde{\bf L}_1(k)+\tilde{\bf L}_2(k)
\end{align}
where $\tilde{\bf L}_1(k)$ is the first operator and $\tilde{\bf L}_2(k)$ is the sum of the last two operators. Then from Lemma \ref{compact} and the fact that $\Skb-\Ska$,$\Kkb-\Kka$ are smoothing operators of order 3,2 respectively, we have $\tilde{\bf L}_2(k)$ is  compact. From Lemma \ref{coercive} and  the fact that ${\bf S}_0$ is invertible, whence we have $\tilde{\bf L}_1(k)$ is invertible. This proves our lemma.
\end{proof} \proofend

\section{The case  when $N-I$ changes sign}\label{piece}
In this section we will discuss the Fredholm properties of $\Lk$ when $N$ is not a constant any longer. Our approach to handle the more general case follows exactly the lines of the discussion in Section 4 of \cite{ConH}, and here for sake of the reader's convenience we sketch the main steps of the analysis. 
\subsection{Piecewise homogeneous medium}
To begin with, we assume that $D=\overline{D}_1\cup\overline{D}_2$ such that $D_1\subset D$ and $D_2:=D\setminus\overline{D}_1$ and consider the simple case when $N=n_2I$ in $D_2$ and $N=n_1I$ in $D_1$ where $n_1>0$,  $n_2>0$ are two positive constants such that $(n_1-1)(n_2-1)<0$. Let $\Gamma=\partial D$, $\Sigma=\partial D_1$ which are assumed to be $C^2$ smooth surfaces and $\nu$ denotes the unit normal vector to either $\Gamma$ or $\Sigma$ outward to $D$ and $D_1$ respectively (see Figure \ref{areatev}). Let us recall the notations $k_1=k\sqrt{n_1}$ and $k_2=k\sqrt{n_2}$. 
\begin{figure}[ht!]
\centering
\includegraphics[scale=0.7]{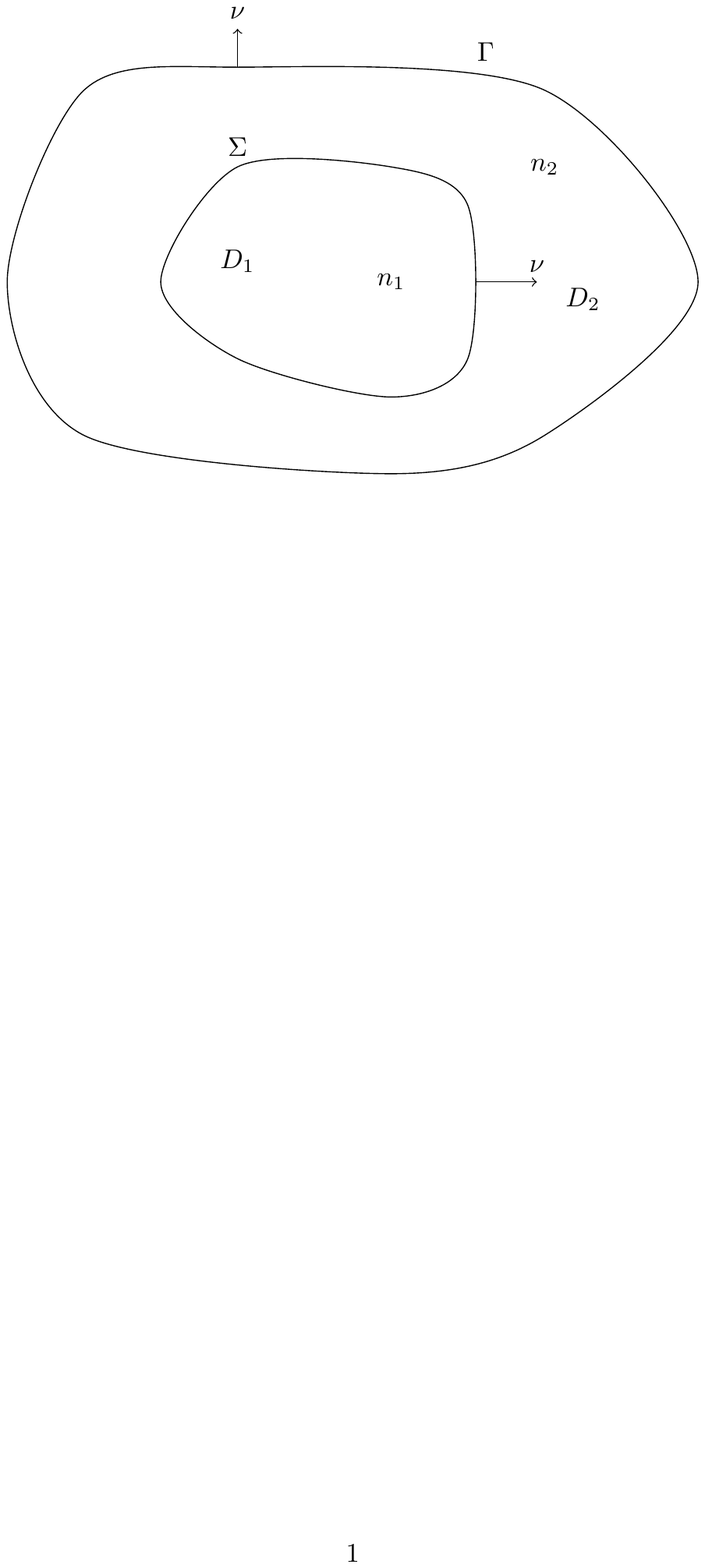}\\
\caption{Configuration of the geometry for two constants}
\label{areatev}
\end{figure}

For convenience, we let  ${\bf K}^{\Sigma, \Gamma}_k$ and ${\bf T}^{\Sigma, \Gamma}_k$ be the potentials ${\bf K}_k$ and ${\bf T}_k$ given by (\ref{dT}) and (\ref{dK}) for densities defined  on $\Sigma$ and evaluated on $\Gamma$. 
The solution of the transmission eigenvalue problem (\ref{maxwell1g})-(\ref{neumanng}) by means of the Stratton-Chu formula can be represented as 
\begin{eqnarray}
\vecE_0(x) &=& \mcurl \int_{\Gamma} (\vecE_0 \times \nu)({\bf y}) \Phi_k(x,y) ds_y +\int_\Gamma (\mcurl\vecE_0 \times \nu)({\bf y}) \Phi_k(\cdot,{\bf y})ds_y \nonumber \\ 
&+&  \frac{1}{k^2}\nabla \int_\Gamma \mdiv_T  (\mcurl\vecE_0 \times \nu)({\bf y}) \Phi_k(\cdot, {\bf y}) ds_y \quad \mbox{in} \quad D\label{an1}
\end{eqnarray}

\begin{eqnarray}
\vecE(x) &=& \mcurl \int_{\Sigma} (\vecE \times \nu)({\bf y}) \Phi_{k_1}(x,y) ds_y +\int_\Sigma (\mcurl\vecE \times \nu)({\bf y}) \Phi_{k_1}(\cdot,{\bf y})ds_y \nonumber \\ 
&+&  \frac{1}{k_1^2}\nabla \int_\Sigma \mdiv_T  (\mcurl\vecE \times \nu)({\bf y}) \Phi_{k_1}(\cdot, {\bf y}) ds_y \quad \mbox{in} \quad D_1\label{an2}
\end{eqnarray}
\begin{eqnarray}
\vecE(x) &=& \mcurl \int_{\Gamma} (\vecE \times \nu)({\bf y}) \Phi_{k_2}(x,y) ds_y +\int_\Gamma (\mcurl\vecE \times \nu)({\bf y}) \Phi_{k_2}(\cdot,{\bf y})ds_y \nonumber \\ 
&+&  \frac{1}{k_2^2}\nabla \int_\Gamma \mdiv_T  (\mcurl\vecE \times \nu)({\bf y}) \Phi_{k_2}(\cdot, {\bf y}) ds_y \nonumber \\
&-&\mcurl \int_{\Sigma} (\vecE \times \nu)({\bf y}) \Phi_{k_2}(x,y) ds_y -\int_\Sigma (\mcurl\vecE \times \nu)({\bf y}) \Phi_{k_2}(\cdot,{\bf y})ds_y \nonumber \\ 
&-&  \frac{1}{k_2^2}\nabla \int_\Sigma \mdiv_T  (\mcurl\vecE \times \nu)({\bf y}) \Phi_{k_2}(\cdot, {\bf y}) ds_y \quad \mbox{in} \quad D_2 \label{an3}
\end{eqnarray}
Let $\vecE \times \nu=\vecE_0 \times \nu =\vecM$, $\mcurl\vecE \times \nu=\mcurl\vecE_0 \times \nu=\vecJ$ on $\Gamma$ and $\vecE \times \nu=\vecM'$, $\mcurl\vecE \times \nu=\vecJ'$ on $\Sigma$. From the jump relations of the boundary integral operators across  $\Gamma$ and $\Sigma$, we have that 
\begin{align} 
\left( \begin{array}{cc}
k_2\Tkc^{\Gamma}-k\Tka^{\Gamma} & \Kkc^{\Gamma}-\Kka^{\Gamma}\\
\Kkc^{\Gamma}-\Kka^{\Gamma} & \frac{1}{k_2}\Tkc^{\Gamma}-\frac{1}{k}\Tka^{\Gamma}\end{array} \right) 
\left( \begin{array}{c} \vecM \\ \vecJ \end{array} \right)
&=
\left( \begin{array}{cc}
k_2\Tkc^{\Sigma, \Gamma} & \Kkc^{\Sigma, \Gamma}\\
\Kkc^{\Sigma, \Gamma} & \frac{1}{k_2}\Tkc^{\Sigma, \Gamma}\end{array} \right) 
\left( \begin{array}{c} \vecM' \\ \vecJ' \end{array} \right)\label{eq1}
\\
\left( \begin{array}{cc}
k_2\Tkc^{\Sigma}+k_1\Tkb^{\Sigma} & \Kkc^{\Sigma}+\Kkb^{\Sigma}\\
\Kkc^{\Sigma}+\Kkb^{\Sigma} & \frac{1}{k_2}\Tkc^{\Sigma}+\frac{1}{k_1}\Tkb^{\Sigma}\end{array} \right) 
\left( \begin{array}{c} \vecM' \\ \vecJ' \end{array} \right)
&=
\left( \begin{array}{cc}
k_2\Tkc^{\Gamma,\Sigma} & \Kkc^{\Gamma,\Sigma}\\
\Kkc^{\Gamma,\Sigma} & \frac{1}{k_2}\Tkc^{\Gamma,\Sigma}\end{array} \right) 
\left( \begin{array}{c} \vecM \\ \vecJ \end{array} \right).\label{eq2}
\end{align}
Let us denote by $\vecL_{20}(k)$, $\vecL^{\Sigma, \Gamma}(k)$, $\vecL_{21}(k)$, $\vecL^{\Gamma, \Sigma}(k)$ the matrix-valued operators in the above two equations in the order from the left to the right from the top to the bottom, respectively. By the regularity of the solution of the Maxwell's equations inside $D_2$ (see e.g. \cite{kirsch-h}), we have $(\vecM', \vecJ') \in \vecH_t^{-\frac{1}{2}}(\Sigma,\mdiv) \times \vecH_t^{-\frac{1}{2}}(\Sigma,\mdiv)$. Then the equation 
\begin{align*} 
\vecL_{21}(k)
\left( \begin{array}{c} \vecM' \\ \vecJ' \end{array} \right)= \left( \begin{array}{c} \bf g \\ \bf h \end{array} \right)
\end{align*}
where $(\bf g, \bf h) \in \vecH_t^{-\frac{1}{2}}(\Sigma,\mdiv) \times \vecH_t^{-\frac{1}{2}}(\Sigma,\mdiv)$
corresponds to the transmission problem which is to find $(\vecE_2, \vecE_1) \in \vecH_{loc}(\mcurl, \R^3 \backslash \overline{D_1}) \times \vecH(\mcurl, D_1)$ and $\vecE_2$  such that
\begin{eqnarray*}
\mcurl \mcurl \vecE_2-k_2^2 \vecE_2 = 0 \quad &\mbox{in}& \quad \R^3 \backslash \overline{D_1}   \\
\mcurl \mcurl \vecE_1-k_1^2 \vecE_1 = 0 \quad &\mbox{in}& \quad D_1  \\
\nu \times \vecE_2 - \nu \times \vecE_1=\bf g \quad &\mbox{on}& \quad \Sigma  \\ 
\nu \times (\mcurl \vecE_2) - \nu \times (\mcurl \vecE_1)=\bf h \quad &\mbox{on}& \quad \Sigma 
\end{eqnarray*}
and $\vecE_2$ satisfies the Silver-Mueller radiation condition. By well-posedeness of the transmission problem we have $\vecL_{21}(k)$ is invertible. 
Hence pugging in (\ref{eq1}) ${\bf M}'$ and ${\bf J}'$ from (\ref{eq2}) we obtain the following equation for ${\bf M}$ and ${\bf J}$ 
\begin{align} \label{int-n} 
\vecL(k)
\left( \begin{array}{c} \vecM \\ \vecJ \end{array} \right) = \left( \begin{array}{c} 0 \\ 0 \end{array} \right)
\end{align}
where $\vecL(k):=\vecL_{20}(k)-\vecL^{\Sigma, \Gamma}(k) {\vecL_{21}(k)}^{-1} \vecL^{\Gamma, \Sigma}(k) $. Then in a similar way to Theorem \ref{equiv-ITE}, we can prove the following theorem.
\begin{theorem} \label{equiv-ITE-n}
The following statements are equivalent:
\begin{itemize}
\item[(1)] There exists non trivial $\vecE, \vecE_0 \in L^2(D), \vecE- \vecE_0 \in \vecH(\mbox{\em curl}^2, D)$ such that (\ref{maxwell1})-(\ref{neumann}) holds. \\
\item[(2)] There exists non trivial $(\vecM, \vecJ) \in \vecH_t^{-\frac{1}{2}}(\Gamma) \times \vecH^{-\frac{3}{2},-\frac{1}{2}}(\mbox{\em div}, \Gamma) $ such that (\ref{int-n}) holds and $\vecE_0^{\infty}(\vecM,\vecJ)=0$  where
\begin{eqnarray*}
\vecE_0^{\infty}(\vecM,\vecJ)(\hat{x}) &=& \hat{x} \times\left( \frac{1}{4 \pi}\mbox{\em curl} \int_{\Gamma} \vecM(y) e^{-ik\hat{x}\cdot y} ds_y \right.\\
&+&\left. \frac{1}{4 \pi k^2} \nabla \int_\Gamma \mbox{\em div}_\Gamma  \vecJ (y) e^{-ik\hat{x}\cdot y} ds_y + \int_\Gamma \vecJ( y) e^{-ik\hat{x}\cdot y}ds_y\right) \times \hat{x}
\end{eqnarray*} 
\end{itemize}
\end{theorem}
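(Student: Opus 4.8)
The plan is to mimic the proof of Theorem~\ref{equiv-ITE}, using the three Stratton--Chu representations (\ref{an1})--(\ref{an3}) in place of the single one, and the jump relations of Lemma~\ref{jumprelations} across the two surfaces $\Gamma$ and $\Sigma$. For the direction $(1)\Rightarrow(2)$, I would start from a nontrivial solution $(\vecE,\vecE_0)$ of the transmission eigenvalue problem, set $\vecM=\vecE\times\nu=\vecE_0\times\nu$, $\vecJ=\mcurl\vecE\times\nu=\mcurl\vecE_0\times\nu$ on $\Gamma$ and $\vecM'=\vecE\times\nu$, $\vecJ'=\mcurl\vecE\times\nu$ on $\Sigma$; the interior regularity of $\vecE$ in $D_2$ gives the stated Sobolev regularity of $(\vecM',\vecJ')$ so that $\vecL_{21}(k)$ is defined and, by the quoted well-posedness of the transmission problem on $D_1$, invertible. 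Taking tangential traces of (\ref{an1})--(\ref{an3}) on $\Gamma$ and $\Sigma$, the boundary conditions (\ref{dirichletg})--(\ref{neumanng}) together with the kernel smoothness of $\Phi_{k_j}-\Phi_k$ yield (\ref{eq1})--(\ref{eq2}); eliminating $(\vecM',\vecJ')$ via $\vecL_{21}(k)^{-1}$ produces (\ref{int-n}). To see that $\vecE_0^{\infty}(\vecM,\vecJ)=0$, I would note that the same integral representation (\ref{ffp}) (with kernel $\Phi_k$ on $\Gamma$) that defines $\vecE_0$ in $D$ also defines a function in $\R^3\setminus\overline D$; the jump relations (\ref{jumprelations1})--(\ref{jumprelations4}) applied to the first row of (\ref{eq1}) force the exterior traces $(\vecE_0\times\nu)^+$ and $(\mcurl\vecE_0\times\nu)^+$ to vanish, so this exterior field is identically zero, and in particular its far field $\vecE_0^{\infty}(\vecM,\vecJ)$, whose explicit form comes from the asymptotics of $\Phi_k$ as in \cite{coltonkress}, vanishes.

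For the converse $(2)\Rightarrow(1)$, given nontrivial $(\vecM,\vecJ)$ solving (\ref{int-n}) with $\vecE_0^{\infty}(\vecM,\vecJ)=0$, I would first recover $(\vecM',\vecJ')$ by setting $(\vecM',\vecJ')^\top=\vecL_{21}(k)^{-1}\vecL^{\Gamma,\Sigma}(k)(\vecM,\vecJ)^\top$, so that (\ref{eq2}) holds by construction and (\ref{eq1}) then follows from (\ref{int-n}). Using these densities I would define $\vecE_0$ in $D$ by (\ref{an1}) with data $(\vecM,\vecJ)$ on $\Gamma$, $\vecE_1$ in $D_1$ by (\ref{an2}) with data $(\vecM',\vecJ')$ on $\Sigma$, and $\vecE_2$ in $D_2$ by (\ref{an3}) with data $(\vecM,\vecJ)$ on $\Gamma$ and $(\vecM',\vecJ')$ on $\Sigma$; set $\vecE=\vecE_1$ on $D_1$ and $\vecE=\vecE_2$ on $D_2$. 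By construction each piece solves the correct Helmholtz-type equation ($\mcurl\mcurl\,\cdot-k_j^2\,\cdot=0$), and the jump relations of Lemma~\ref{jumprelations} across $\Sigma$, combined with equation (\ref{eq2}), give continuity of $\nu\times\vecE$ and $\nu\times\mcurl\vecE$ across $\Sigma$, so $\vecE\in\vecL^2(D)$ solves $\mcurl\mcurl\vecE-k^2N\vecE=0$ in $D$; similarly the jump relations across $\Gamma$ together with (\ref{eq1}) give $\vecE\times\nu=\vecE_0\times\nu$ and $\mcurl\vecE\times\nu=\mcurl\vecE_0\times\nu$ on $\Gamma$, so that $\vecE-\vecE_0\in\vecH(\mcurl^2,D)$.

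It remains to show the pair is nontrivial. Suppose $\vecE_0\equiv0$ and $\vecE\equiv0$ in $D$. Since $\vecE_0^{\infty}(\vecM,\vecJ)=0$ and the exterior field built from (\ref{an1}) is radiating, Rellich's lemma and unique continuation give that this field vanishes in $\R^3\setminus\overline D$; the jump relations across $\Gamma$ then force $\vecM=0$ and $\vecJ=0$, contradicting nontriviality of $(\vecM,\vecJ)$. Hence not both $\vecE_0$ and $\vecE$ vanish, and $(1)$ holds.

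The main obstacle I anticipate is the careful bookkeeping of the jump relations across two surfaces: one must track which boundary operators act $\Gamma\to\Gamma$, $\Sigma\to\Sigma$, or $\Sigma\to\Gamma$/$\Gamma\to\Sigma$, establish the correct Sobolev mapping properties for each (the off-diagonal $\Sigma$-to-$\Gamma$ operators have smooth kernels and are harmless, but the principal-part structure is as in (\ref{prop})), and verify that the limiting traces from $D_1$, $D_2$ and the exterior all match up so that the representation formulas are consistent. A secondary point requiring care is justifying that the interior regularity of $\vecE$ in $D_2$ genuinely yields $(\vecM',\vecJ')\in\vecH_t^{-1/2}(\Sigma,\mdiv)\times\vecH_t^{-1/2}(\Sigma,\mdiv)$ in the exact sense needed for $\vecL_{21}(k)$ and its inverse to act between the right spaces; once these technical points are settled, the rest is a direct transcription of the argument in Theorem~\ref{equiv-ITE} and of Theorem~2.2 in \cite{ConH}.
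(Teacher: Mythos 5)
Your proposal is correct and follows essentially the same route as the paper: it reproduces the construction preceding the theorem (the representations (\ref{an1})--(\ref{an3}), the trace identities (\ref{eq1})--(\ref{eq2}), invertibility of $\vecL_{21}(k)$ via well-posedness of the transmission problem, and elimination of $(\vecM',\vecJ')$ to obtain (\ref{int-n})), and then adapts the argument of Theorem \ref{equiv-ITE} — jump relations across $\Gamma$ and $\Sigma$ for the converse, and Rellich's lemma for nontriviality — exactly as the paper intends when it says the proof is "in a similar way to Theorem \ref{equiv-ITE}". The technical caveats you flag (bookkeeping of the $\Gamma$/$\Sigma$ operators and the regularity of $(\vecM',\vecJ')$ on $\Sigma$) are precisely the points the paper handles by citing interior regularity and the smoothness of the off-diagonal kernels, so no gap remains.
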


Now we note that $\Sigma$ and $\Gamma$ are two disjoint curves and  hence we have that $\vecL^{\Sigma, \Gamma}(k)$, $\vecL^{\Gamma, \Sigma}(k)$ are compact. By writing $\vecL(k)$ as a $3 \times 3$ matrix operator $\tLk$ similar to (\ref{int_tLk}), we can have the following lemma directly from Lemma \ref{fredholm}. 
\begin{lemma} \label{fredholm-n}
The operator $\tLk:\vecH_0(\Gamma)\times \vecH_1(\Gamma)\to \vecH^*(\Gamma)\times \vecH^{-\frac{1}{2}}(\Gamma)$ is  Fredholm with index zero, i.e. it can be written as a sum of an invertible operator and a compact operator. Furthermore the family of the operators $\tLk$  depends analytically on $k \in \C \backslash \R_{-}$.
\end{lemma}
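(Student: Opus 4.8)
The plan is to reduce the claim to Lemma~\ref{fredholm} by isolating the ``single--interface'' part of $\vecL(k)$ and absorbing the $\Gamma$--$\Sigma$ coupling into a compact perturbation. First I would split $\vecJ=\vecQ+\vecP$ with $\vecQ\in\vecH_0^{-\frac32,-\frac12}(\mbox{div},\Gamma)$ and $\vecP=\nabla_\Gamma p\in\vecH_1(\Gamma)$, exactly as in Section~2, and rewrite $\vecL(k)$ acting on $(\vecM,\vecJ)$ as a $3\times3$ block operator $\tLk$ acting on $(\vecM,\vecQ,\vecP)\in\vecH_0(\Gamma)\times\vecH_1(\Gamma)$, tested against $(\tilde\vecM,\tilde\vecQ,\tilde\vecP)$. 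In this form the contribution of $\vecL_{20}(k)$ is \emph{literally} the operator of~(\ref{int_tLk}) with $n$ replaced by $n_2$ and $k_1$ replaced by $k_2$, so Lemma~\ref{fredholm} applies verbatim: the $\vecL_{20}(k)$--part of $\tLk$ is Fredholm of index zero from $\vecH_0(\Gamma)\times\vecH_1(\Gamma)$ to $\vecH^*(\Gamma)\times\vecH^{-\frac12}(\Gamma)$.

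Next I would show that the coupling term $\vecL^{\Sigma,\Gamma}(k)\,\vecL_{21}(k)^{-1}\,\vecL^{\Gamma,\Sigma}(k)$, written in the same $3\times3$ block form, is compact. The point is that $\Gamma$ and $\Sigma$ are disjoint $C^2$ surfaces, so $\dist(\Sigma,\Gamma)>0$ and the kernel $\Phi_{k_2}(\x,\y)$, together with all its $\x$- and $\y$-derivatives, is bounded and (real-)analytic for $\x\in\Gamma$, $\y\in\Sigma$ (and vice versa). Consequently $\vecL^{\Gamma,\Sigma}(k)$ and $\vecL^{\Sigma,\Gamma}(k)$ are integral operators with $C^\infty$ kernels and therefore map any Sobolev space of tangential fields on one curve continuously into $C^\infty$ tangential fields on the other, in particular into spaces that embed compactly into the target spaces used here. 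Since $\vecL_{21}(k)$ is invertible with bounded inverse (by the well-posedness of the transmission problem for $D_1$ established just before Theorem~\ref{equiv-ITE-n}), the composition $\vecL^{\Sigma,\Gamma}(k)\,\vecL_{21}(k)^{-1}\,\vecL^{\Gamma,\Sigma}(k)$ maps $\vecH_0(\Gamma)\times\vecH_1(\Gamma)$ boundedly into arbitrarily smooth tangential fields on $\Gamma$, hence is compact into $\vecH^*(\Gamma)\times\vecH^{-\frac12}(\Gamma)$; splitting $\vecJ=\vecQ+\vecP$ and passing to the $3\times3$ form preserves this compactness. Thus $\tLk=(\text{Fredholm of index }0)+(\text{compact})$, which is Fredholm of index zero.

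For the analyticity statement I would argue blockwise: each of $\vecL_{20}(k)$, $\vecL_{21}(k)$, $\vecL^{\Sigma,\Gamma}(k)$, $\vecL^{\Gamma,\Sigma}(k)$ is an analytic operator-valued function of $k\in\C\setminus\R_-$ (the entries are built from the potentials $\Ska,\Tka,\Kka$, which are analytic in $k$ as in Lemma~\ref{bddLk}, together with the analytic branch of $\sqrt{n_1},\sqrt{n_2}$); since $\vecL_{21}(k)$ is invertible for every $k\in\C\setminus\R_-$, the map $k\mapsto\vecL_{21}(k)^{-1}$ is analytic by the usual Neumann-series argument, and finite sums and compositions of analytic operator-valued functions are analytic, so $\vecL(k)$ and hence its $3\times3$ reorganization $\tLk$ depend analytically on $k\in\C\setminus\R_-$.

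The genuinely non-routine ingredient — everything else being bookkeeping that mirrors Section~2 and~\cite{ConH} — is controlling the coupling term: one must track the mapping properties of $\vecL^{\Gamma,\Sigma}(k)$ and $\vecL^{\Sigma,\Gamma}(k)$ through the inverse $\vecL_{21}(k)^{-1}$ carefully enough to land in a space that is compactly embedded in the target, and one must be sure that $\vecL_{21}(k)^{-1}$ is in fact defined (and analytic) throughout $\C\setminus\R_-$, i.e. that the interior/exterior transmission problem for $D_1$ with indices $n_1$ inside and $n_2$ outside has no spurious complex resonances. This is the step I would expect to require the most care.
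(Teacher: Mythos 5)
Your proposal follows essentially the same route as the paper: there, too, the disjointness of $\Sigma$ and $\Gamma$ is used to conclude that $\vecL^{\Sigma,\Gamma}(k)$ and $\vecL^{\Gamma,\Sigma}(k)$ are compact, the invertibility of $\vecL_{21}(k)$ comes from well-posedness of the transmission problem, and the Fredholm property and analyticity of $\tLk$ are obtained by rewriting $\vecL(k)$ as a $3\times3$ operator as in (\ref{int_tLk}) and invoking Lemma \ref{fredholm} for the $\vecL_{20}(k)$ part. Your version merely fills in the bookkeeping (and the caveat about $\vecL_{21}(k)^{-1}$) that the paper leaves implicit.
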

This approach can be readily generalized to the case when the medium consists of finitely many homogeneous layers.
\subsection{General  inhomogeneous medium} \label{gen}

In a more general case where $N=n(x)I$ in $D_1$, where $n\in L^{\infty}(D_1)$ such that $n(x)\geq \alpha>0$ but still constant in $D_2$, we can  prove the same result as in  Lemma \ref{fredholm-n} by replacing fundamental solution $\Phi_{k_1}(\cdot,y)$ with the free space fundamental ${\mathbb G}(\cdot, y)$ of 
$$\Delta {\mathbb G}(\cdot,y)+k^2n(x){\mathbb G}(\cdot,y)=-\delta_y\qquad \mbox{in}\; {\mathbb R}^3$$
in the distributional sense together with the Sommerfeld radiation condition, where $n(x)$ is extended by its constant value in $D_2$ to the whole space ${\mathbb R}^3$. Because $\Phi_{k_2}(\cdot,y)-{\mathbb G}(\cdot, y)$ solves the Helmholtz equation with wave number $k_2$ in the neighborhood of $\Gamma$ the mapping properties of the integral operators do not change. We refer the reader to Section 4.2 of \cite{ConH} for more details. 

In fact the above idea can be applied even in a more general case, provided that $N$ is a  positive constant not equal to one  in a neighborhood of  $\Gamma$. More precisely, consider  a neighborhood ${\mathcal O}$ of $\Gamma$ in $D$ (above denoted by $D_2$) with $C^2$ smooth boundary (e.g.  one can take ${\mathcal O}$ to be the region in $D$ bounded by $\Gamma$ and $\Sigma:=\left\{x-\epsilon \nu(x), \; x\in \Gamma\right\}$ for some $\epsilon>0$ where $\nu$ is the outward unit normal vector to $\Gamma$). Assume that $N=nI$ in ${\mathcal O}$, where $n\neq 1$ is a positive constant, whereas   in $D\setminus \overline{\mathcal O}$  $N$ satisfies the assumptions at the beginning of the paper, i.e. $N$ is a $3\times3$  matrix-valued function  with $L^\infty(D)$ entries such that $\overline{\xi} \cdot \re(N)\xi\geq \alpha>0$ and $\overline{\xi} \cdot \im(N)\xi\geq 0$ for every $\xi\in {\mathbb C}^3$.   Then similar result as in Theorem  \ref{equiv-ITE-n} and Lemma \ref{fredholm-n} holds true in this case. Indeed, without going into details, we can express $\vecE_0$ by (\ref{an1}) and $\vecE$ by (\ref{an3}) in ${\mathcal O}$ and in $D\setminus \overline{\mathcal O}$ we can leave it in the form of a partial differential equation with Cauchy data connected to $\vecE$ in ${\mathcal O}$. Hence it  is possible to obtain an equation of the form (\ref{int-n}) where the operator $\vecL(k)$ is written as
\begin{equation} \label{integralA}
\vecL(k)=\vecL_{n}(k)-\vecL^{\Sigma, \Gamma}(k) {{\mathbf A}^{-1}(k)} \vecL^{\Gamma, \Sigma}(k)
\end{equation}
where $\vecL_{n}(k)$ is the boundary integral operator corresponding to the transmission eigenvalue problem with contrast $n-1$, the compact operators $\vecL^{\Sigma, \Gamma}(k)$ and $\vecL^{\Gamma, \Sigma}(k)$ are  defined   right below (\ref{eq1}) and (\ref{eq2})  and ${\mathbf A}(k)$ is the invertible solution operator corresponding to the well-posed transmission problem
\begin{eqnarray*}
\mcurl \mcurl \vecE_2-k^2n_2 \vecE_2 = 0 \quad &\mbox{in}& \quad \R^3 \backslash \overline{D_1}   \\
\mcurl \mcurl \vecE_1-k^2 N \vecE_1 = 0 \quad &\mbox{in}& \quad D_1  \\
\nu \times \vecE_2 - \nu \times \vecE_1=\bf g \quad &\mbox{on}& \quad \Sigma  \\ 
\nu \times (\mcurl \vecE_2) - \nu \times (\mcurl \vecE_1)=\bf h \quad &\mbox{on}& \quad \Sigma 
\end{eqnarray*}
and $\vecE_2$ satisfies the Silver-M{\"u}ller radiation condition.  Hence the above analysis can apply to prove   analogues Theorem  \ref{equiv-ITE-n} and Lemma \ref{fredholm-n}.

For later use in the following we formally state the assumptions on $N$ (here ${\mathcal O}$ is a neighborhood of $\Gamma$ as explained above).
\begin{assum}\label{massum}
$N$ is a $3\times 3$ symmetric matrix-valued function with $L^\infty(D)$  entries such that $\overline{\xi} \cdot \re(N)\xi\geq \alpha>0$ and $\overline{\xi} \cdot \im(N)\xi\geq 0$ for every $\xi\in {\mathbb C}^3$, $|\xi|=1$ and $N=nI$ in $\mathcal{O}$ where $n\neq 1$ is a positive constant.  
\end{assum}
\section{The existence of  non transmission eigenvalue wave numbers}
In this section we assume that $N$ satisfies Assumption \ref{massum}  and  consider pure imaginary wave numbers $k$ and, for convenience, let $\lambda:=-k^2$ be a real positive number and start by proving an a priori estimate following the idea of \cite{syld} for the scalar case.
\begin{lemma}\label{lema1}
Assume that $N$ satisfies \ref{massum} and $\chi(x) \in \C^\infty_0(D)$ is real valued cutoff function with $0 \le \chi \le 1$ and $\chi\equiv 1$ in $D \backslash \overline{\mathcal{O}}$. If $\vecv \in \vecL^2(D)$ and 
$$
(\mbox{\em curl}\,\mbox{\em curl}  + \lambda) \vecv =0 \quad \mbox{in} \quad D
$$ 
then there exists a constant $K(\chi)$ such that for sufficiently large $\lambda$
\begin{equation} \label{chiv}
\| \chi \vecv \|^2 \le K \frac{\| (1-\chi)\vecv \|^2}{\lambda}.
\end{equation}
\end{lemma}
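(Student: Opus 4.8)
The plan is to adapt the scalar a priori estimate of \cite{syld}, exploiting that $\mcurl\,\mcurl$ behaves like $-\Delta$ on divergence free fields and that the multiplier argument only requires cutoffs compactly supported in $D$. First I would record the regularity of $\vecv$: taking the divergence of $(\mcurl\,\mcurl+\lambda)\vecv=0$ gives $\lambda\,\mdiv\vecv=0$, hence $\mdiv\vecv=0$ in $D$, so that $-\Delta\vecv=\mcurl\,\mcurl\vecv-\nabla\,\mdiv\vecv=-\lambda\vecv$; thus each component of $\vecv$ is in $L^2(D)$ and solves the constant coefficient equation $\Delta v_j=\lambda v_j$, and interior elliptic regularity yields $\vecv\in C^\infty(D)$. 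Consequently all the integrations by parts below, performed against test fields compactly supported in $D$, are legitimate.

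Next I would establish a Caccioppoli type energy identity. For any $\zeta\in C^\infty_0(D)$ with $0\le\zeta\le1$, multiply $\mcurl\,\mcurl\vecv=-\lambda\vecv$ by $\zeta^2\overline{\vecv}$, integrate over $D$ and integrate by parts; since $\zeta$ vanishes near $\Gamma$ there is no boundary term, and using $\mcurl(\zeta^2\overline{\vecv})=\zeta^2\overline{\mcurl\vecv}+2\zeta\,\nabla\zeta\times\overline{\vecv}$ and taking real parts yields
\begin{equation*}
\int_D\zeta^2|\mcurl\vecv|^2\,dx+\lambda\int_D\zeta^2|\vecv|^2\,dx=-2\,\re\int_D\zeta\,(\mcurl\vecv)\cdot(\nabla\zeta\times\overline{\vecv})\,dx .
\end{equation*}
Estimating the right-hand side by $2\int_D\zeta\,|\mcurl\vecv|\,|\nabla\zeta|\,|\vecv|\,dx\le\tfrac12\int_D\zeta^2|\mcurl\vecv|^2\,dx+2\int_D|\nabla\zeta|^2|\vecv|^2\,dx$ and absorbing the curl term on the left, one obtains
\begin{equation*}
\tfrac12\int_D\zeta^2|\mcurl\vecv|^2\,dx+\lambda\int_D\zeta^2|\vecv|^2\,dx\le 2\|\nabla\zeta\|_{\infty}^2\int_{\mathrm{supp}\,\nabla\zeta}|\vecv|^2\,dx ,
\end{equation*}
and in particular $\lambda\,\|\zeta\vecv\|^2\le 2\|\nabla\zeta\|_{\infty}^2\,\|\vecv\|^2_{\vecL^2(\mathrm{supp}\,\nabla\zeta)}$ for every $\lambda>0$.

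Finally I would choose the cutoff carefully. Given $\chi$ as in the statement, pick $\zeta\in C^\infty_0(D)$ with $0\le\zeta\le1$ and $\zeta\equiv1$ on an open neighborhood of $\mathrm{supp}\,\chi$, which is possible because $\mathrm{supp}\,\chi$ is a compact subset of $D$. Then $\mathrm{supp}\,\nabla\zeta$ is disjoint from $\mathrm{supp}\,\chi$, so $\chi\equiv0$, i.e. $1-\chi\equiv1$, on $\mathrm{supp}\,\nabla\zeta$, whence $\|\vecv\|^2_{\vecL^2(\mathrm{supp}\,\nabla\zeta)}\le\|(1-\chi)\vecv\|^2$; since moreover $0\le\chi\le\zeta$ pointwise we have $\|\chi\vecv\|\le\|\zeta\vecv\|$, and combining with the energy estimate gives $\lambda\,\|\chi\vecv\|^2\le 2\|\nabla\zeta\|_{\infty}^2\,\|(1-\chi)\vecv\|^2$, which is (\ref{chiv}) with $K=K(\chi):=2\|\nabla\zeta\|_{\infty}^2$ (valid, in fact, for every $\lambda>0$). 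I expect the only real subtlety to be this choice of $\zeta$: testing directly with $\chi$ does not close the estimate, because $\mathrm{supp}\,\nabla\chi$ unavoidably reaches into the region where $1-\chi$ is small (indeed vanishing), and no pointwise bound $|\nabla\chi|\le c\,(1-\chi)$ can hold for a smooth function that equals $1$ on an open set; fattening $\chi$ to $\zeta$ moves $\mathrm{supp}\,\nabla\zeta$ entirely into $\{\chi=0\}$, where $1-\chi\equiv1$, and makes everything close. The remaining points---the interior regularity of $\vecv$ and the Green identity for $\mcurl\,\mcurl$---are routine.
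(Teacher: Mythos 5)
Your proposal is correct, and it differs from the paper's argument in one genuine respect: the choice of cutoff used as multiplier. The paper tests the equation directly with $\chi^2\overline{\vecv}$ and, after integration by parts, observes that the cross terms are purely imaginary, so taking real parts gives the exact identity $\int_D|\mcurl(\chi\vecv)|^2\,dx+\lambda\int_D|\chi\vecv|^2\,dx=\int_D|\nabla\chi\times\vecv|^2\,dx$; it then bounds the right side by $K(\chi)\|\vecv\|^2\le K(\chi)\bigl(\|\chi\vecv\|^2+\|(1-\chi)\vecv\|^2\bigr)$ and absorbs the $K\|\chi\vecv\|^2$ term into $\lambda\|\chi\vecv\|^2$, which is precisely where the hypothesis ``$\lambda$ sufficiently large'' is used. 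You instead fatten $\chi$ to a second cutoff $\zeta\in C_0^\infty(D)$ with $\zeta\equiv1$ near $\mathrm{supp}\,\chi$, so that $\nabla\zeta$ is supported where $1-\chi\equiv1$; this removes the absorption step and yields the estimate for every $\lambda>0$ with the explicit constant $2\|\nabla\zeta\|_\infty^2$, at the modest cost of invoking Young's inequality (your identity has the curl term only up to a factor $1/2$, whereas the paper's is exact) and of a preliminary regularity remark ($\mdiv\vecv=0$, hence $\Delta\vecv=\lambda\vecv$ and interior smoothness) to justify the integrations by parts, which the paper leaves implicit. One caveat: your closing claim that ``testing directly with $\chi$ does not close the estimate'' is not accurate — no pointwise bound $|\nabla\chi|\le c(1-\chi)$ is needed, since splitting $\|\vecv\|^2$ and absorbing the $\chi$ part for large $\lambda$ closes it, and this is exactly the paper's (slightly shorter) route; the large-$\lambda$ restriction it incurs is harmless because that is all the lemma asserts and all that is used later in Theorem \ref{nonITE}.
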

\begin{proof}
Since $\chi \in \C^\infty_0(D)$ we have
\begin{eqnarray*}
0&=&\int_D (\mcurl \mcurl + \lambda) \vecv \cdot (\chi^2 \overline{\vecv}) dx= \int_D \mcurl \mcurl \vecv  \cdot (\chi^2 \overline{\vecv}) dx+ \lambda \int_D \vecv \cdot (\chi^2 \overline{\vecv}) dx\\
&=& \int_D \mcurl \vecv  \cdot \mcurl (\chi^2 \overline{\vecv})dx + \lambda \int_D \vecv \cdot (\chi^2 \overline{\vecv})dx \\
&=& \int_D \mcurl \vecv  \cdot (\chi \mcurl(\chi \overline{\vecv}))dx + \int_D \mcurl \vecv  \cdot (\nabla \chi  \times (\chi \overline{\vecv}))dx + \lambda \int_D  \vecv \cdot (\chi^2 \overline{\vecv})dx \\
&=& \int_D \mcurl (\chi \vecv)  \cdot \mcurl(\chi \overline{\vecv}) dx - \int_D \mcurl (\chi \overline{\vecv})  \cdot (\nabla \chi  \times \vecv)dx \\
&+& \int_D \mcurl \vecv  \cdot (\nabla \chi  \times (\chi \overline{\vecv}))dx +\lambda \int_D \vecv \cdot (\chi^2 \overline{\vecv})dx \\
&=& \int_D |\mcurl (\chi \vecv)|^2dx - \int_D ( \chi \mcurl \overline{\vecv}+\nabla \chi \times  \overline{\vecv})  \cdot (\nabla \chi  \times \vecv)dx \\
&+& \int_D \mcurl \vecv  \cdot (\nabla \chi  \times (\chi \overline{\vecv}))dx +\lambda \int_D  \vecv \cdot (\chi^2 \overline{\vecv})dx \\
&=& \int_D |\mcurl (\chi \vecv)|^2dx -\int_D |(\nabla \chi  \times \vecv)|^2dx+\lambda \int_D |\chi\vecv|^2dx \\
&+& \int_D \left( (\chi \mcurl \vecv) \cdot (\nabla \chi  \times \overline{\vecv}) -  (\chi \mcurl \overline{\vecv}) \cdot (\nabla \chi  \times \vecv)\right)dx.
\end{eqnarray*}
Taking the real part yields
$$
\int_D |\mcurl (\chi \vecv)|^2dx + \lambda \int_D |\chi\vecv|^2dx =\int_D |(\nabla \chi  \times \vecv)|^2dx
$$
and then
\begin{eqnarray*}
\lambda \| \chi\vecv \|^2 \le K(\chi) \| \vecv \|^2 \le K(\chi) \left( \| \chi\vecv \|^2 + \| (1-\chi)\vecv \|^2\right) 
\end{eqnarray*}
which yields (\ref{chiv}) for sufficiently large $\lambda$.

\end{proof} \proofend

Now we are ready to prove the following theorem.
\begin{theorem} \label{nonITE}
Under the assumption \ref{massum}, there exists a sufficiently large real $\lambda>0$ where $\lambda=-k^2$ such that (\ref{maxwell1g})-(\ref{neumanng}) has only trivial solutions.
\end{theorem}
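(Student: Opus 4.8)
The plan is to argue by contradiction: suppose there is a sequence $\lambda_j \to +\infty$ (with $\lambda_j = -k_j^2$, $k_j = i\kappa_j$, $\kappa_j\to\infty$) such that the transmission eigenvalue problem (\ref{maxwell1g})-(\ref{neumanng}) admits a nontrivial solution pair $\vecE^{(j)}, \vecE_0^{(j)}$ for $\lambda = \lambda_j$. After normalizing, I would first derive uniform a priori bounds on $\vecE^{(j)}$ and $\vecE_0^{(j)}$ in suitable norms, then use the cutoff estimate of Lemma \ref{lema1} to localize the problem near $\Gamma$, where $N = nI$ is constant, and finally reduce to a contradiction with the known absence of transmission eigenvalues for the constant-coefficient problem at large imaginary wave numbers (which follows from the strict coercivity of ${\bf L}(i\kappa)$ established in Lemma \ref{coercive}, together with the Fredholm analyticity from Lemma \ref{fredholm}).

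More concretely, the first step is to set up the right energy identities. Writing $\vecw = \vecE_0 - \vecE \in \vecH_0(\mcurl^2,D)$ and using the two equations (\ref{maxwell1g})-(\ref{maxwellg}), I would test $(\mcurl\mcurl\vecE_0 - k^2\vecE_0)$ against $\overline{\vecw}$ and $(\mcurl\mcurl\vecE - k^2 N\vecE)$ against $\overline{\vecw}$, subtract, and integrate by parts using the vanishing Cauchy data of $\vecw$ on $\Gamma$. Taking real and imaginary parts with $\lambda = -k^2 > 0$ should give coercive control: the real part yields $\int_D |\mcurl\vecw|^2 + \lambda\int_D \re(N)\vecE\cdot\overline{\vecE} - \lambda\int_D \vecE_0\cdot\overline{\vecE_0}$ type terms, and because $\re(N) \ge \alpha I$ this forces, after the normalization $\|\vecE_0\|^2_{\vecL^2(D)} + \|\vecE\|^2_{\vecL^2(D)} = 1$ (say), that $\|\vecw\|_{\vecL^2(D)}$ and $\lambda^{-1}\|\mcurl\mcurl\vecw\|$ stay bounded, and similarly one controls $\|\mcurl\vecv\|$ in terms of $\lambda\|\vecv\|$ for $\vecv \in \{\vecE,\vecE_0\}$ via the equations. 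The point of these estimates is to make the later limiting arguments legitimate.

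The second step is localization. On $D\setminus\overline{\mathcal{O}}$ the contrast $N - I$ may change sign, but the cutoff $\chi$ is supported there and Lemma \ref{lema1} applies to $\vecv = \vecE$ and $\vecv = \vecE_0$ separately (both satisfy $(\mcurl\mcurl + \lambda)\vecv = 0$ in $D\setminus\overline{\mathcal O}$, after noting $N=I$ on $D_2=\mathcal O$ is not what we want — rather apply it where $N$ is constant; for $\vecE_0$ it holds on all of $D$, and for $\vecE$ on $D\setminus\overline{\mathcal O}$ only if $N$ is constant there, so the honest statement is that $\chi$ must be supported in the region of constancy and one works with $(1-\chi)$ supported near $\Gamma$). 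This shows $\|\chi\vecE_0\|^2 \le K\lambda^{-1}\|(1-\chi)\vecE_0\|^2$, forcing the mass of $\vecE_0$ (and of $\vecE$) to concentrate in $\mathcal{O}$ as $\lambda\to\infty$. Thus in the limit the problem is governed entirely by its restriction to $\mathcal{O}$, where $N = nI$; combining the concentration with the uniform bounds and the equivalence with the boundary integral system (Theorem \ref{equiv-ITE-n}), the normalized traces $\vecM^{(j)},\vecJ^{(j)}$ must remain bounded away from zero, yet by the coercivity estimate of Lemma \ref{coercive} applied at $k_j = i\kappa_j$ (after handling the non-divergence-free part of $\vecJ$ via the splitting $\vecJ = \vecQ + \vecP$ as in Lemma \ref{fredholm}), $\langle {\bf L}(i\kappa_j)(\vecM^{(j)},\vecJ^{(j)}), (\vecM^{(j)},\vecJ^{(j)})\rangle = 0$ forces $\|\vecM^{(j)}\| + \|\vecJ^{(j)}\| \to 0$, a contradiction.

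The main obstacle I anticipate is the localization step: Lemma \ref{lema1} in its present form controls $\chi\vecv$ by $(1-\chi)\vecv$, which is the wrong direction unless one also shows that $(1-\chi)\vecv$ — i.e. the part of the field near $\Gamma$ — cannot itself carry all the mass without violating the constant-coefficient coercivity. Making this circular-looking pair of estimates into a genuine contradiction requires carefully tracking the $\lambda$-dependence of every constant and identifying the precise function-space norms in which the boundary traces $\vecM,\vecJ$ are bounded below (the trace spaces $\vecH_t^{-1/2}(\Gamma)$ and $\vecH^{-3/2,-1/2}(\mdiv,\Gamma)$ scale differently, so one must be careful that the normalization is not annihilated by the weak trace topology). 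Once the bookkeeping of powers of $\lambda$ is done correctly, the contradiction with Lemma \ref{coercive} closes the argument.
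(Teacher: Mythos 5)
Your proposal does not close the argument, and the step you lean on at the end would fail. The traces $(\vecM^{(j)},\vecJ^{(j)})$ of a nontrivial eigenpair satisfy the homogeneous equation for the full operator $\vecL(k)=\vecL_{n}(k)-\vecL^{\Sigma,\Gamma}(k)\,{\mathbf A}^{-1}(k)\,\vecL^{\Gamma,\Sigma}(k)$ of (\ref{integralA}), not for the constant-coefficient operator ${\bf L}(i\kappa)$ alone; hence $\left<{\bf L}(i\kappa_j)(\vecM^{(j)},\vecJ^{(j)}),(\vecM^{(j)},\vecJ^{(j)})\right>$ has no reason to vanish, and Lemma \ref{coercive} cannot be invoked to force $\vecM^{(j)},\vecJ^{(j)}\to 0$. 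Even if it could, the coercivity constant in Lemma \ref{coercive} depends on $\kappa$, and no uniformity in $\kappa$ is established, so the limiting argument would still not produce a contradiction. Note also that the paper's logic runs in the opposite direction: Theorem \ref{nonITE} is proved first, by an argument internal to $D$, and only then is it used (in Lemma \ref{injec}) to obtain injectivity of the boundary integral operator; deducing Theorem \ref{nonITE} from coercivity properties of the boundary operator is circular within this architecture. A further slip: Lemma \ref{lema1} requires $(\mcurl\mcurl+\lambda)\vecv=0$ in all of $D$, so it applies to $\vecE_0$ (up to the factor $\lambda$) but never to $\vecE$, which does not satisfy the free equation anywhere; your hedged discussion of where to place the cutoff does not resolve this.

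The missing idea is the sign argument the paper actually uses, which your outline never touches. Set $\vecu=\vecE-\vecE_0\in\vecH_0(\mcurl^2,D)$ and choose $\vecv=\lambda\vecE_0$ when $n-1<0$ (resp.\ $\vecv=\lambda\vecE$ when $n-1>0$). Testing the two equations against $\overline{\vecv}$ and $\overline{\vecu}$, using the density of smooth compactly supported fields in $\vecH_0(\mcurl^2,D)$, and combining the resulting identities gives $\int_D|\mcurl\vecu|^2\,dx+\lambda\int_D N\vecu\cdot\overline{\vecu}\,dx=\frac{1}{\lambda}\int_D(N-I)\vecv\cdot\overline{\vecv}\,dx$ in the first case (with the analogous identity in the second). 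Splitting the right-hand side with $\chi^2$ and $1-\chi^2$, the $(1-\chi^2)$ part is supported in $\mathcal O$ where $N-I=(n-1)I$ has a fixed sign, and Lemma \ref{lema1} shows that for large $\lambda$ the $\chi^2$ part is dominated by it — this is precisely the direction in which the lemma is useful, not the ``circular-looking'' obstacle you describe. The left-hand side has nonnegative real part by the coercivity of $\re N$, while the right-hand side then has nonpositive real part because of the sign of $n-1$ and the case-dependent choice of $\vecv$; hence $\vecu=0$ and $\vecv=0$, i.e.\ $\vecE=\vecE_0=0$. This choice of $\vecv$ adapted to the sign of $n-1$ and the resulting sign confrontation is the core of the proof; without it (or a genuinely uniform substitute for Lemma \ref{coercive} applied to the correct operator) your outline does not yield the theorem.
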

\begin{proof}
Assume first  $n-1<0$ in $\mathcal{O}$, let $\vecu=\vecE-\vecE_0 \in \vecH_0(\mcurl^2, D)$, $\vecv=\lambda \vecE_0 \in \vecL^2(D)$, then
\begin{eqnarray}
\mcurl \mcurl \vecu+\lambda N \vecu = -(N-I)\vecv \quad &\mbox{in}& \quad D  \label{1maxwell1} \\
\mcurl \mcurl \vecv+\lambda \vecv = 0 \quad &\mbox{in}& \quad D \label{1maxwell} \\
\nu \times \vecu = \nu \times (\mcurl \vecu) =0 \quad &\mbox{on}& \quad \Gamma. \label{1dirichlet}
\end{eqnarray}
Then for any ${\bf \varphi} \in {\bf C}_0^\infty(D)$, interpreting (\ref{1maxwell}) in the distributional sense yields
$$
\int_D \vecv(\mcurl \mcurl {\bf \varphi}+\lambda {\bf \varphi})=0,
$$
and hence the denseness of ${\bf C}_0^\infty(D)$ in $\vecH_0(\mcurl^2, D)$ (see \cite{haddar}) yields
\begin{equation} \label{case1-1}
\int_D \overline{\vecv} \cdot \mcurl^2 \vecu + \lambda \int_D \overline{\vecv} \cdot \vecu=0
\end{equation}
Multiplying (\ref{1maxwell1}) by $\overline{\vecv}$ yields
$$
\int_D \overline{\vecv} \cdot \mcurl^2 \vecu dx + \lambda \int_D N \vecu \cdot \overline{\vecv} dx + \int_D (N-I)\vecv \cdot \overline{\vecv}dx=0
$$
Combining the above with (\ref{case1-1}) yields
\begin{equation} \label{case1-2}
\lambda \int_D (N-I)\overline{\vecu} \cdot \vecv dx + \int_D (N-I)\vecv \cdot \overline{\vecv} dx=0
\end{equation}
Multiplying (\ref{1maxwell1}) by $\overline{\vecu}$ and integrating by parts yields
\begin{equation*} 
\int_D |\mcurl \vecu|^2 dx + \lambda \int_D N \vecu \cdot \overline{\vecu} dx + \int_D (N-I)\vecv \cdot \overline{\vecu} dx=0
\end{equation*}
Noting that $N$ is symmetric, we have $(N-I)\overline{\vecu} \cdot \vecv=(N-I)\vecv \cdot \overline{\vecu}$ and hence
\begin{equation} \label{case1-3}
\int_D |\mcurl \vecu|^2 dx + \lambda \int_D N \vecu \cdot \overline{\vecu} dx + \int_D (N-I)\overline{\vecu} \cdot \vecv dx=0
\end{equation}
By regularity \cite{nedelec} $\vecv$ is sufficiently smooth in $D$ away from the boundary and hence  by unique continuation we can see $\int_\mathcal{O} (n-1)(1-\chi^2)|\vecv|^2 dx \not=0$. Then combining (\ref{case1-2}) with (\ref{case1-3}) yields
\begin{eqnarray}
&& \int_D |\mcurl \vecu|^2 dx + \lambda \int_D N\vecu \cdot \overline{\vecu}\, dx = \frac{1}{\lambda} \int_D (N-I)\vecv \cdot \overline{\vecv} \,dx \label{positivecase1} \\
&=& \frac{1}{\lambda} \left(\int_D (N-I)\chi^2\vecv \cdot \overline{\vecv}\, dx + \int_D (N-I)(1-\chi^2)\vecv \cdot \overline{\vecv}\, dx\right) \nonumber \\
&=& \frac{1}{\lambda} \int_D (N-I)(1-\chi^2)\vecv \cdot \overline{\vecv} \, dx\left(1+ \frac{\int_D (N-I)\chi^2 \vecv \cdot \overline{\vecv} \, dx}{\int_D (N-I)(1-\chi^2)\vecv \cdot \overline{\vecv}\, dx}\right) \nonumber\\
&=& \frac{1}{\lambda}(n-1) \int_\mathcal{O} (1-\chi^2)|\vecv|^2 \,dx\left(1+ \frac{\int_D (N-I)\chi^2 \vecv \cdot \overline{\vecv} \,dx}{(n-1)\int_\mathcal{O}(1-\chi^2)|\vecv|^2 \,dx}\right) \label{negativecase1}
\end{eqnarray}
From Lemma \ref{lema1} we have for sufficiently large $\lambda$
\begin{equation*}
\frac{\left|\int_D (N-I)\chi^2 \vecv \cdot \overline{\vecv}\,dx\right|}{(1-n)\int_\mathcal{O}(1-\chi^2)|\vecv|^2 dx}<\frac{K(N_{max}+1)}{\lambda} <1,
\end{equation*}
where $N_{max}$ is supremum over $D$ of  the largest eigenvalue of $N$, which implies
$$
\Re\left(1+ \frac{\int_D (N-I)\chi^2 \vecv \cdot \overline{\vecv} dx}{(n-1)\int_\mathcal{O}(1-\chi^2)|\vecv|^2 dx}\right) >0.
$$
Then, since $n-1<0$, the real part of (\ref{negativecase1}) is non positive for sufficiently large $\lambda$ but the real part of (\ref{positivecase1}) is non negative. Hence the only possibility is $\vecu=0, \vecv=0$, i.e. $\vecE=\vecE_0=0$.

Let us next consider $n-1>0$ in $\mathcal{O}$, and let $\vecu=\vecE-\vecE_0$, $\vecv=\lambda \vecE$. Then
\begin{eqnarray}
\mcurl \mcurl \vecu+\lambda \vecu = -(N-I)\vecv \quad &\mbox{in}& \quad D  \label{2maxwell1} \\
\mcurl \mcurl \vecv+\lambda N \vecv = 0 \quad &\mbox{in}& \quad D \label{2maxwell} \\
\nu \times \vecu = \nu \times (\mcurl \vecu) =0 \quad &\mbox{on}& \quad \Gamma \label{2dirichlet}
\end{eqnarray}
Using the  same argument as for (\ref{case1-1})
\begin{equation} \label{case2-1}
\int_D \mcurl^2 \overline{\vecu} \cdot \vecv dx+ \lambda \int_D N \vecv \cdot \overline{\vecu} dx =0
\end{equation}
Multiplying (\ref{2maxwell1}) by $\overline{\vecv}$ yields
$$
\int_D  \overline{\vecv} \cdot \mcurl^2 \vecu dx + \lambda \int_D \overline{\vecv} \cdot \vecu dx + \int_D (N-I)\vecv \cdot \overline{\vecv} dx=0
$$
Combining the conjugate of the  above with (\ref{case2-1}) yields
\begin{equation} \label{case2-2}
\lambda \int_D (N-I)\overline{\vecu} \cdot \vecv dx = \int_D \overline{N-I} \overline{\vecv} \cdot \vecv dx
\end{equation}
Multiplying (\ref{2maxwell1}) by $\overline{\vecu}$ and integrating by parts yields
\begin{eqnarray*}
\int_D |\mcurl \vecu|^2 dx + \lambda \int_D |\vecu|^2 dx + \int_D (N-I)\vecv \cdot \overline{\vecu} dx=0.
\end{eqnarray*}
Note that since $N$ is symmetric, then $(N-I)\overline{\vecu} \cdot \vecv=(N-I)\vecv \cdot \overline{\vecu}$ and hence
\begin{equation}\label{case2-3}
\int_D |\mcurl \vecu|^2 dx + \lambda \int_D |\vecu|^2 dx + \int_D (N-I) \overline{\vecu}\cdot \vecv dx=0.
\end{equation}
Then combining (\ref{case2-2}) with (\ref{case2-3}) yields
\begin{eqnarray}
&& \int_D |\mcurl \vecu|^2 \,dx + \lambda \int_D |\vecu|^2 \,dx = -\frac{1}{\lambda} \int_D \overline{N-I} \,\overline{\vecv} \cdot \vecv \,dx  \label{positivecase2} \\
&=& -\frac{1}{\lambda} \left(\int_D \chi^2 \overline{N-I}\, \overline{\vecv} \cdot \vecv \,dx + \int_D (1-\chi^2)\overline{N-I}\, \overline{\vecv} \cdot \vecv \,dx\right) \nonumber \\
&=& -\frac{1}{\lambda} \int_D (1-\chi^2) \overline{N-I}\, \overline{\vecv} \cdot \vecv\, dx\left(1+ \frac{\int_D \chi^2 \overline{N-I}\, \overline{\vecv} \cdot \vecv dx}{\int_D (1-\chi^2)\overline{N-I} \,\overline{\vecv} \cdot \vecv dx}\right) \nonumber\\
&=& -\frac{1}{\lambda} \int_\mathcal{O} (n-1)(1-\chi^2)|\vecv|^2 dx\left(1+ \frac{\int_D \chi^2 \overline{N-I}\, \overline{\vecv} \cdot \vecv \,dx}{(n-1)\int_\mathcal{O}(1-\chi^2)|\vecv|^2 \,dx}\right). \label{negativecase2}
\end{eqnarray}
From Lemma \ref{lema1} we have for sufficiently large $\lambda$
\begin{equation*}
\frac{\left|\int_D \chi^2 \overline{N-I} \, \overline{\vecv} \cdot \vecv \, dx\right|}{(n-1)\int_\mathcal{O}(1-\chi^2)|\vecv|^2\,dx}<\frac{K(N_{max}+1)}{\lambda} <1.
\end{equation*}
Then  
$$
\Re\left(1+ \frac{\int_D \chi^2 \overline{N-I}\, \overline{\vecv} \cdot \vecv \, dx}{\int_\mathcal{O} (n-1)(1-\chi^2)|\vecv|^2\, dx}\right) >0.
$$
Therefore, since $n-1>0$, the real part of (\ref{negativecase2}) is non positive for sufficiently large $\lambda$ but the real part of (\ref{positivecase2}) is non negative. Hence the only possibility is $\vecu=0, \vecv=0$, i.e. $\vecE=\vecE_0=0$.
\end{proof} \proofend

\section{Discreteness of transmission eigenvalues}
Recall that in Section \ref{piece},  we have proved that $\tLk$ is  a Fredholm operator. Hence to show discreteness we will use the analytic Fredholm  theory \cite{coltonkress}. To this end we must show that there exists $k$ such that $\tLk$ is injective.
\begin{lemma} \label{injec}
Assume that $N$ satisfies \ref{massum}. There exists a purely imaginary $k$ with sufficiently large $|k|>0$ such that $\tilde{\bf L}(k)$ is injective.
\end{lemma}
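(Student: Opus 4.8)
The strategy is to read off the kernel of $\tLk$ at a purely imaginary wave number as (a rescaling of) the solution space of the transmission eigenvalue problem, and then to make that space trivial by means of Theorem \ref{nonITE}, which applies precisely for $|k|$ large.

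First I would set $k=i\kappa$ with $\kappa>0$ and suppose $\tLk(\vecM,\vecQ,\vecP)=0$. Putting $\vecJ:=\vecQ+\vecP\in\vecH^{-\frac{3}{2},-\frac{1}{2}}(\mbox{div}, \Gamma)$, the equivalence of the two formulations recorded just before Lemma \ref{fredholm} shows that $(\vecM,\vecJ)$ solves the homogeneous system (\ref{int-n}). Using $(\vecM,\vecJ)$ as densities on $\Gamma$ in the Stratton--Chu representations (\ref{an1}) and (\ref{an3}) I would define $\vecE_0$ and $\vecE$ on $\R^3\setminus(\Gamma\cup\Sigma)$; since $k$ is purely imaginary the kernel $\Phi_{i\kappa}$ decays exponentially, so these potentials and their curls decay exponentially at infinity, which for evanescent fields takes the place of the Silver--M\"uller condition. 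Arguing exactly as in the proof of Theorem \ref{equiv-ITE-n}, the jump relations of Lemma \ref{jumprelations} together with (\ref{int-n}) imply that $(\vecE_0,\vecE)$ restricted to $D$ is a solution of (\ref{maxwell1g})--(\ref{neumanng}) with this $k$, that is, with $\lambda=-k^2=\kappa^2$.

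Next I would invoke Theorem \ref{nonITE}: there is $\lambda_0>0$ such that for every real $\lambda>\lambda_0$ problem (\ref{maxwell1g})--(\ref{neumanng}) has only the trivial solution. Choosing $\kappa$ with $\kappa^2>\lambda_0$ forces $\vecE_0=\vecE=0$ in $D$. It then remains to prove $\vecM=0$ and $\vecJ=0$; granting this, $\vecJ=\vecQ+\vecP=0$ together with the directness of the Helmholtz decomposition --- applying $\mdiv_{\Gamma}$ gives $\mdiv_{\Gamma}\vecP=\Delta_{\Gamma} p=0$, hence $\vecP=\nabla_{\Gamma} p=0$ and then $\vecQ=0$ --- yields $(\vecM,\vecQ,\vecP)=0$, which is the asserted injectivity.

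I expect the main obstacle to be precisely this last step. For a real wave number the vanishing of the densities is obtained from the far field condition built into statement $(2)$ of Theorem \ref{equiv-ITE-n} together with Rellich's lemma, but a generic element of the kernel of $\tLk$ need not have a vanishing far field, so that route is unavailable for a purely imaginary $k$. Instead I would use that $\vecE_0$ and $\vecE$ are constructed from the \emph{same} densities on $\Gamma$, so that (\ref{int-n}) imposes the transmission conditions on $\Gamma$ from \emph{both} sides; consequently, once $\vecE_0$ and $\vecE$ vanish in $D$, the two exterior potentials --- which satisfy $\mcurl\mcurl\vecE_0+\kappa^2\vecE_0=0$ and $\mcurl\mcurl\vecE+n\kappa^2\vecE=0$ (with $n_2\kappa^2$ near $\Gamma$ in the piecewise case) and decay exponentially at infinity --- share all their Cauchy data on $\Gamma$. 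A comparison of the associated energy identities, for which one has to check that the relevant combinations of potentials lie in $\vecV(\mcurl^2,\R^3)$ exactly as in the proof of Lemma \ref{coercive} and that the contributions at infinity are killed by the exponential decay, then forces both exterior potentials to vanish, using once more the coercivity of $\mcurl\mcurl+\kappa^2$ for $\kappa>0$; the jump relations of Lemma \ref{jumprelations} finally give $\vecM=[\gamma_t\vecE_0]=0$ and $\vecJ=[\gamma_t\mcurl\vecE_0]=0$. This follows the pattern of \cite{ConH}, the essential new ingredient being that a purely imaginary wave number turns the indefinite operator $\mcurl\mcurl$ into a coercive one.
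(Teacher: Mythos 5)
Your proposal follows essentially the same route as the paper's proof: a kernel element of $\tLk$ (equivalently of $\vecL(k)$ in (\ref{int-n})) is turned, via the Stratton--Chu potentials and the jump relations, into a solution of the transmission eigenvalue problem in $D$, which Theorem \ref{nonITE} annihilates for purely imaginary $k$ of large modulus; the exterior fields are then killed not by Rellich's lemma but by the coercive variational formulation of Lemma \ref{coercive} posed in $\R^3\setminus\overline{D}$ with zero data, and the jump relations finally give $\vecM=\vecJ=0$, hence $(\vecM,\vecQ,\vecP)=0$. This matches the paper's argument step for step (including the treatment of the variable-coefficient region through the solution operator ${\mathbf A}^{-1}(k)$ implicit in your appeal to Theorem \ref{equiv-ITE-n}), so no further comparison is needed.
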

\begin{proof}
Let us extend $N$ to $\R^3 \backslash \overline{D}$ by $N=nI$ where $n$ is the constant $N|_{\mathcal O}$. Assume there exists $\left( \begin{array}{c} \vecM \\ \vecJ \end{array} \right)$ such that $\Lk \left( \begin{array}{c} \vecM \\ \vecJ \end{array} \right)=0$. We will show that if $k$ is purely imaginary with large modulus, then $\left( \begin{array}{c} \vecM \\ \vecJ \end{array} \right)=0$. Recalling (\ref{integralA}), we define
\begin{align*}
\left( \begin{array}{c} \vecM' \\ \vecJ' \end{array} \right)
=
\mathcal{A}^{-1}(k) \vecL^{\Gamma, \Sigma}(k)
\left( \begin{array}{c} \vecM \\ \vecJ \end{array} \right)
\end{align*}
and
\begin{eqnarray*}
\vecE_0(x) &=& \mcurl \int_{\Gamma} \vecM ({\bf y}) \Phi_k(x,y) ds_y +\int_\Gamma \vecJ({\bf y}) \Phi_k(\cdot,{\bf y})ds_y \nonumber \\ 
&+&  \frac{1}{k^2}\nabla \int_\Gamma \mdiv_T  \vecJ ({\bf y}) \Phi_k(\cdot, {\bf y}) ds_y \quad \mbox{in} \quad \R^3 \backslash \Gamma.
\end{eqnarray*}

From the definition of $\left( \begin{array}{c} \vecM' \\ \vecJ' \end{array} \right)$ there exists $\vecE \in \vecL(D_1)$, $D_1:=D\setminus \overline{\mathcal O}$, such that
\begin{eqnarray*}
\mcurl \mcurl \vecE-k^2 N\vecE = 0 \quad &\mbox{in}& \quad D_1  \\ \,
[\vecE \times \nu]^+ = \vecM' \quad &\mbox{on}& \quad \Sigma  \\  \,
[\mcurl \vecE \times \nu]^+ = \vecJ' \quad &\mbox{on}& \quad \Sigma.  
\end{eqnarray*}
Also we define 
\begin{eqnarray*}
\vecE(x) &=& \mcurl \int_{\Gamma} \vecM({\bf y}) \Phi_{k_2}(x,y) ds_y +\int_\Gamma \vecJ({\bf y}) \Phi_{k_2}(\cdot,{\bf y})ds_y \nonumber \\ 
&+&  \frac{1}{k_2^2}\nabla \int_\Gamma \mdiv_T  \vecJ({\bf y}) \Phi_{k_2}(\cdot, {\bf y}) ds_y \\
&-&\mcurl \int_{\Sigma} \vecM' ({\bf y}) \Phi_{k_2}(x,y) ds_y -\int_\Sigma \vecJ'({\bf y}) \Phi_{k_2}(\cdot,{\bf y})ds_y \nonumber \\ 
&-&  \frac{1}{k_2^2}\nabla \int_\Sigma \mdiv_T  \vecJ'({\bf y}) \Phi_{k_2}(\cdot, {\bf y}) ds_y \quad \mbox{in} \quad \R^3 \backslash ( \overline{D}_1\cup \Gamma ).
\end{eqnarray*}
Jump relations across $\Gamma$  applied to $\vecE, \vecE_0$ along with the  equation (\ref{int-n})  yield
\begin{eqnarray}
\mcurl \mcurl \vecE-k^2 N  \vecE = 0 \quad &\mbox{in}& \quad \R^3 \backslash \Gamma  \label{maxwell1-r3} \\
\mcurl \mcurl \vecE_0-k^2 \vecE_0 = 0 \quad &\mbox{in}& \quad \R^3 \backslash \Gamma \label{maxwell-r3} \\
(\nu \times \vecE)^\pm  = (\nu \times \vecE_0)^\pm \quad &\mbox{on}& \quad \Gamma \label{dirichlet-r3} \\ 
(\nu \times \mcurl \vecE)^\pm  = (\nu \times \mcurl \vecE_0)^\pm  \quad &\mbox{on}& \quad \Gamma.  \label{neumann-r3}
\end{eqnarray}
From Theorem \ref{nonITE} if $k$ is purely imaginary with large enough modulus then (\ref{maxwell1-r3})-(\ref{neumann-r3}) in $D$ only has trivial solutions. Since $N=nI$ where $n$ is a constant in $\R^3\backslash \overline{D}$, then the variational formulation of (\ref{maxwell1-r3})-(\ref{neumann-r3}) in $\R^3\backslash \overline{D}$ is (\ref{coercive_varitional}) where the right hand is $0$ and $\R^3 \backslash \Gamma$ is replaced by $\R^3\backslash \overline{D}$. Then $\vecU=0$ and hence $\vecE=0$, $\vecE_0=0$ in $\R^3 \backslash \Gamma$. The jump relations (\ref{jumprelations1})-(\ref{jumprelations4}) yield $\vecM=0$ and $\vecJ=0$ and this proves the lemma.
\end{proof} \proofend

Finally, combining Lemma \ref{fredholm-n} and Lemma \ref{injec}, we can immediately prove our main theorem using the analytic Fredholm theory \cite{coltonkress}.
\begin{theorem}
Assume that $N$ satisfies Assumption \ref{massum}. Then the set of the transmission eigenvalues in ${\mathbb C}$ is discrete.
\end{theorem}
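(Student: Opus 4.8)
By Lemma \ref{fredholm-n}, for each $k \in \C \setminus \R_{-}$ the operator $\tLk : \vecH_0(\Gamma)\times \vecH_1(\Gamma)\to \vecH^*(\Gamma)\times \vecH^{-\frac{1}{2}}(\Gamma)$ is Fredholm of index zero and the family depends analytically on $k$. By Lemma \ref{injec}, there exists at least one value of $k$ (purely imaginary with large modulus) for which $\tLk$ is injective; since the operator is Fredholm of index zero, injectivity at that point means $\tLk$ is boundedly invertible there. Hence the analytic Fredholm theorem (in the form stated, e.g., in \cite{coltonkress}) applies to the holomorphic operator-valued family $k \mapsto \tLk$ on the connected open set $\C \setminus \R_{-}$: either $\tLk$ is non-invertible for all $k$ in this set, or else $(\tLk)^{-1}$ exists and is analytic on $\C \setminus \R_{-}$ except at a discrete set of points with no accumulation point in $\C \setminus \R_{-}$. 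Since the first alternative is excluded by Lemma \ref{injec}, the set $S$ of $k \in \C \setminus \R_{-}$ for which $\tLk$ fails to be injective is discrete.

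**Next I would translate discreteness of $S$ back to discreteness of the transmission eigenvalues.** First, by Theorem \ref{equiv-ITE-n} together with the reduction to the $3\times 3$ system $\tLk$ described in Section \ref{piece}, a value $k$ is a transmission eigenvalue if and only if $\tLk$ has a nontrivial kernel element $(\vecM,\vecQ,\vecP)$ for which the associated far field pattern vanishes (or, equivalently after the manipulations in the proof of Theorem \ref{equiv-ITE}, simply $\vecE_0^\infty(\vecM,\vecJ)=0$). In particular, the set of transmission eigenvalues in $\C \setminus \R_{-}$ is a subset of $S$, hence discrete in $\C \setminus \R_{-}$ with no accumulation point there. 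Second, one must handle the ray $\R_{-}$ and the origin: since $N$ has nonnegative imaginary part and $\re N \geq \alpha I > 0$, a standard argument (e.g. multiplying \eqref{maxwell1g} by $\overline{\vecE}$, integrating by parts over $D$, using the vanishing Cauchy data of $\vecE - \vecE_0$ on $\Gamma$ and the corresponding identity for $\vecE_0$) shows that no $k$ on the negative real axis or at $k=0$ can be a transmission eigenvalue, so discreteness in $\C \setminus \R_{-}$ upgrades to discreteness in $\C$; the only possible accumulation point is $+\infty$.

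**The main obstacle — and the part that needs care rather than the soft topological wrapping — is verifying that the analytic Fredholm machinery genuinely applies, i.e., that the hypotheses of Lemmas \ref{fredholm-n} and \ref{injec} are in force for exactly the same operator $\tLk$ on exactly the same pair of spaces, and over the connected domain $\C \setminus \R_{-}$.** Concretely, one should check that the decomposition $\tLk = \tilde{\bf L}_1(k) + \tilde{\bf L}_2(k)$ of Lemma \ref{fredholm}, with $\tilde{\bf L}_1(k)$ invertible and $\tilde{\bf L}_2(k)$ compact, extends to the general medium of Assumption \ref{massum} via the representation \eqref{integralA}: the extra term $-\vecL^{\Sigma,\Gamma}(k)\,\mathbf{A}^{-1}(k)\,\vecL^{\Gamma,\Sigma}(k)$ is compact because $\Sigma$ and $\Gamma$ are disjoint, and $\mathbf{A}(k)$ is invertible and analytic wherever the associated transmission problem is well posed — which requires excluding the interior transmission-type frequencies of that auxiliary problem, but these form at most a discrete set and do not affect discreteness of the final eigenvalue set (one simply removes them, or notes that $\mathbf{A}^{-1}$ is meromorphic with discrete poles, which is still enough to run analytic Fredholm theory on the complement). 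Granting all of this, the proof is essentially immediate.

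\begin{proof}
By Section \ref{gen} and Lemma \ref{fredholm-n}, under Assumption \ref{massum} the operator family
$$
\tLk : \vecH_0(\Gamma)\times \vecH_1(\Gamma)\ \longrightarrow\ \vecH^*(\Gamma)\times \vecH^{-\frac{1}{2}}(\Gamma)
$$
is, for every $k \in \C \setminus \R_{-}$, Fredholm of index zero, and depends analytically on $k \in \C \setminus \R_{-}$. By Lemma \ref{injec} there is a purely imaginary $k_0$ with $|k_0|$ large such that $\tilde{\bf L}(k_0)$ is injective; being Fredholm of index zero, $\tilde{\bf L}(k_0)$ is therefore boundedly invertible. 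Since $\C \setminus \R_{-}$ is connected and open, the analytic Fredholm theorem \cite{coltonkress} applies to $k \mapsto \tLk$: the set
$$
S := \{\, k \in \C \setminus \R_{-} \ :\ \tLk \ \text{is not injective} \,\}
$$
is discrete in $\C \setminus \R_{-}$ and has no accumulation point there.

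By Theorem \ref{equiv-ITE-n} (and the reformulation of \eqref{int-n} as the system governed by $\tLk$, exactly as in the passage from $\Lk$ to $\tLk$ in Section \ref{piece}), every transmission eigenvalue $k \in \C \setminus \R_{-}$ lies in $S$, because a nontrivial solution of \eqref{maxwell1g}--\eqref{neumanng} produces a nontrivial element of the kernel of $\tLk$. Hence the set of transmission eigenvalues in $\C \setminus \R_{-}$ is discrete with no finite accumulation point in $\C \setminus \R_{-}$.

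It remains to rule out transmission eigenvalues with $k \in \R_{-}\cup\{0\}$. Suppose $\vecE,\vecE_0$ solve \eqref{maxwell1g}--\eqref{neumanng} with such a $k$; since $\vecE-\vecE_0 \in \vecH_0(\mcurl^2,D)$, integration by parts together with \eqref{maxwell1g}, \eqref{maxwellg} and the vanishing Cauchy data on $\Gamma$ gives
$$
\int_D |\mcurl(\vecE-\vecE_0)|^2\,dx \;=\; k^2\!\int_D \big( N\vecE\cdot\overline{(\vecE-\vecE_0)} - \vecE_0\cdot\overline{(\vecE-\vecE_0)}\big)\,dx,
$$
and taking imaginary parts, using $\overline{\xi}\cdot\im(N)\xi\ge 0$ and $k^2\ge 0$, forces $\im(N)\,\vecE = 0$ and then, from the real part together with $\re(N)\ge \alpha I$, one obtains $\vecE=\vecE_0=0$. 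Therefore no $k \in \R_{-}\cup\{0\}$ is a transmission eigenvalue, and the set of transmission eigenvalues in $\C$ is discrete.
\end{proof}
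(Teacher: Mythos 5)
Your main argument is exactly the paper's: combine Lemma \ref{fredholm-n} (Fredholm of index zero and analytic dependence on $k\in\C\setminus\R_{-}$) with Lemma \ref{injec} (injectivity, hence invertibility by index zero, at a purely imaginary $k$ of large modulus), apply the analytic Fredholm theorem of \cite{coltonkress} on the connected set $\C\setminus\R_{-}$, and use Theorem \ref{equiv-ITE-n} to conclude that every transmission eigenvalue in $\C\setminus\R_{-}$ lies in the discrete non-injectivity set. Your cautionary remarks about $\mathbf{A}(k)$ are harmless but slightly off target: the auxiliary problem behind $\mathbf{A}(k)$ is an exterior scattering-type transmission problem with a radiation condition, not an interior transmission problem, and the paper invokes its well-posedness directly rather than removing a further discrete exceptional set.

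The genuine flaw is your final paragraph. The claim that no $k\in\R_{-}\cup\{0\}$ can be a transmission eigenvalue is false in general: the system (\ref{maxwell1g})--(\ref{neumanng}) depends on $k$ only through $k^2$, so the eigenvalue set is invariant under $k\mapsto -k$, and for real $N$ (e.g. $N=nI$ with constant $n\neq 1$, which Assumption \ref{massum} allows) infinitely many positive real transmission eigenvalues are known to exist, hence negative real ones exist as well. Accordingly your integration-by-parts argument cannot work: multiplying by $\overline{\vecE-\vecE_0}$ produces cross terms such as $\int_D N\vecE\cdot\overline{\vecE_0}\,dx$ and $\int_D \vecE_0\cdot\overline{\vecE}\,dx$ that are neither real nor signed, so taking imaginary parts does not force $\im(N)\vecE=0$, and when $\im N=0$ the right-hand side has no sign at all, so no contradiction arises. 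The correct way to handle the cut is the evenness in $k$: eigenvalues on $\R_{-}$ are the negatives of eigenvalues on $\R_{+}\subset\C\setminus\R_{-}$, and a sequence of eigenvalues accumulating at a point of $\R_{-}\setminus\{0\}$ would, after reflection, accumulate inside $\C\setminus\R_{-}$, contradicting the first part of the proof; this symmetry (not a nonexistence claim) transfers discreteness from $\C\setminus\R_{-}$ to $\C$, except for possible accumulation at $k=0$, a corner case that neither your argument nor the paper's one-line proof addresses.
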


\section*{\normalsize Acknowledgments}
The  research of F. Cakoni is supported in part by the  Air Force Office of Scientific Research  Grant  FA9550-13-1-0199 and NSF Grant DMS-1515072. The research of S. Meng  is supported in part by the Chateaubriand STEM fellowship. S. Meng greatly acknowledges the hospitality of the DeFI Team at Ecole Polytechnique.

\end{document}